\title[Integral points on affine quadrics]{
	Arithmetic purity of the Hardy-Littlewood property\\and geometric sieve for affine quadrics}
\author{Yang Cao \and Zhizhong Huang}
\date{November 19, 2020}
\address{Yang CAO
	\newline University of Science and Technology of China
	\newline School of Mathematical Sciences
	\newline 96 Jinzhai Road, 230026 Hefei, Anhui, China}
\email{yangcao1988@gmail.com}
\address{Zhizhong HUANG
	\newline Institute of Science and Technology Austria
	\newline Am Campus 1, 3400 Klosterneuburg, Austria}
\email{zhizhong.huang@yahoo.com}
\keywords{Hardy-Littlewood property for integral points, geometric sieves, affine quadrics}
\subjclass[2010]{14G05 (primary), 14G12, 11D45, 11N35 (secondary)}
\date{October 28, 2021.}
\def\setminus{\mathchoice
	{\mathbin{\vrule height .92ex width 1.81ex depth -.58ex}}
	{\mathbin{\vrule height .92ex width 1.81ex depth -.58ex}}
	{\mathbin{\vrule height .65ex width 1.00ex depth -.43ex}}
	{\mathbin{\vrule height .50ex width 0.770ex depth -.34ex}}
}
\newcommand{\APHL}{\textbf{(APHL)}}
\newcommand{\APSHL}{\textbf{(APSHL)}}
\newcommand{\APSA}{\textbf{(APSA)}}
\newcommand{\BFQ}{\mathbf{Q}}
\newcommand{\adele}{\mathbf{A}}
\newcommand{\bsquare}{\mathfrak{b}}
\newcommand{\bsquarestar}{\mathfrak{b}^*}
\newcommand{\xx}{\mathbf{x}}
\newcommand{\yy}{\mathbf{y}}
\newcommand{\XX}{\underline{\mathbf{X}}}
\newcommand{\YY}{\underline{\mathbf{Y}}}
\newcommand{\X}{\mathbf{X}}
\newcommand{\zz}{\mathbf{z}}
\newcommand{\SO}{\mathsf{SO}}
\newcommand{\Sp}{\mathsf{Spin}}
\newcommand{\SL}{\operatorname{SL}}
\newcommand{\rank}{\operatorname{rk}}
\newcommand{\BA}{{\mathbb {A}}}
\newcommand{\BF}{{\mathbb {F}}}
\newcommand{\BN}{{\mathbb {N}}}
\newcommand{\BP}{{\mathbb {P}}}
\newcommand{\BQ}{{\mathbb {Q}}}
\newcommand{\BR}{{\mathbb {R}}}
\newcommand{\BZ}{{\mathbb {Z}}}
\newcommand{\CA}{{\mathcal {A}}}
\newcommand{\CB}{{\mathcal {B}}}
\newcommand{\CG}{{\mathcal {G}}}
\newcommand{\CP}{{\mathcal {P}}}
\newcommand{\CQ}{{\mathcal {Q}}}
\newcommand{\CU}{{\mathcal {U}}}
\newcommand{\CV}{{\mathcal {V}}}
\newcommand{\CW}{{\mathcal {W}}}
\newcommand{\CX}{{\mathcal {X}}}
\newcommand{\CY}{{\mathcal {Y}}}
\newcommand{\CZ}{{\mathcal {Z}}}
\newcommand{\RA}{{\mathbf {A}}}
\newcommand{\Aut}{{\mathrm{Aut}}}
\newcommand{\codim}{{\mathrm{codim}}}
\newcommand{\End}{{\mathrm{End}}}
\newcommand{\Gal}{{\mathrm{Gal}}}
\newcommand{\GL}{{\mathrm{GL}}}
\newcommand{\id}{{\mathrm{id}}}
\newcommand{\Pic}{\mathrm{Pic}}
\renewcommand{\mod}{\ \mathrm{mod}\ }
\newcommand{\Spec}{{\mathrm{Spec}}}
\newcommand{\sbt}{\subset}
\newcommand{\pr}{\operatorname{pr}}
\newcommand{\orbit}{\mathfrak{O}_{\adele}}
\newcommand{\m}{\operatorname{m}}
\theoremstyle{plain}
\newtheorem{theorem}{Theorem}[section]
\newtheorem{lemma}[theorem]{Lemma}
\newtheorem{corollary}[theorem]{Corollary}
\newtheorem{proposition}[theorem]{Proposition}
\theoremstyle{definition}
\newtheorem{definition}[theorem]{Definition}
\newtheorem{remark}[theorem]{Remark}
\newtheorem{question}[theorem]{\rm{\textbf{Question}}}
\newtheorem*{remark*}{Remark}
\newtheorem*{remarks*}{Remarks}
\newtheorem{hypothesis}[theorem]{Hypothesis}
\begin{document}
\maketitle
		\begin{minipage}{\linewidth}
			\begin{flushright}
			\begin{CJK}{UTF8}{gkai}
	岂曰无衣，与子同袍。\\
	同气连枝，共盼春来。
			\end{CJK}
			\end{flushright}
			
		\end{minipage}
	
\begin{abstract}
	We establish the Hardy-Littlewood property (\emph{à la Borovoi-Rudnick}) for Zariski open subsets in affine quadrics of the form $q(x_1,\cdots,x_n)=m$, where $q$ is a non-degenerate integral quadratic form in $n\geqslant 3$ variables and $m$ is a non-zero integer. This gives asymptotic formulas for the density of integral points taking coprime polynomial values, which is a quantitative version of the arithmetic purity of strong approximation property off infinity for affine quadrics. 
\end{abstract}

\tableofcontents
\section{Introduction}
\subsection{Arithmetic purity of the Hardy-Littlewood property}
Let $X$ be a smooth geometrically integral variety over $\BQ$. Assume that $X$ satisfies strong approximation off the real place $\BR$. An open question first raised by Wittenberg (cf. \cite[\S2.7 Question 2.11]{Wittenberg}) asks whether all open subsets $U\subset X$ also satisfy strong approximation off $\BR$, whenever \begin{equation}\label{eq:condcodim}
	\codim_{X}(X\setminus U)\geqslant 2.  
\end{equation}
We say that such $X$ satisfies \emph{arithmetic purity of strong approximation} (\APSA\ for short) off $\BR$ (cf. \cite[Definition 1.2]{Cao-Huang}). As observed by Min\v{c}hev (cf. \cite[Proposition 2.6]{Borovoi-Rudnick}), the condition \eqref{eq:condcodim} guarantees that there is no cohomological or topological obstruction for $U$ to satisfy strong approximation. Recently in \cite{CLX,Cao-Huang}, Wittenberg's question was settled in the affirmative for a wide class of semisimple simply connected linear algebraic groups and their homogeneous spaces (with connected stabilizers). 
We refer to the references therein for an account of proven results towards this question.

In this article, we address an effective and statistical aspect of Wittenberg's question concerning the distribution of rational points in adelic spaces. Our starting point is the work of Borovoi-Rudnick \cite{Borovoi-Rudnick}. Now we assume that $X\subset\BA^n_\BQ$ is a smooth quasi-affine geometrically integral variety whose adelic space $X(\RA)$ is non-empty. Assume moreover that $X(\BR)$ has no compact connected components.
We equip the affine space $\BA^n(\BR)=\BR^n$ with an euclidean norm $\|\cdot\|$.
Let $B:=B_\infty\times B_f\subset X(\RA)$ be an adelic neighbourhood with $B_\infty\subset X(\BR)$ a real connected component and $B_f\subset X(\RA_f)$ a compact open subset.
For any $T>0$, consider the counting function
\begin{equation}\label{eq:NXT}
	N_X(B;T):=\#\{\XX\in X(\BQ)\cap B:\|\XX\|\leqslant T\}.
\end{equation}
We are interested in the varieties $X$ for which an asymptotic formula for $N_X(B;T)$ exists for any such $B$. Assume that $X$ satisfies strong approximation off $\BR$, then we expect that the leading constant should be a product of local densities (depending also on $\|\cdot\|$). However there also exist certain such varieties on which the integral Hasse principle or strong approximation can fail.   

To better put forth our qualitative characterization of these varieties, we further assume that $X$ is equipped with a fixed \emph{gauge form} $\omega_X$, i.e. a nowhere zero differential form of top degree. We associate to $\omega_X$ a \emph{normalised Tamagawa measure} $\m^X=\m_\infty^X\times\m_f^X$ on $X(\RA)$, where the real part $\m_\infty^X$ is a defined on $X(\BR)$, and the finite part $\m_f^X$ is defined on $X(\RA_f)$ (see $\S$\ref{se:normalisedTamagawameasure} for more details). Let $\delta_{X}:X(\RA)\to\BR_{\geqslant 0}$ be a locally constant not identically zero function, and let us define $$B_\infty(T):=\{\XX\in B_\infty:\|\XX\|\leqslant T\}.$$ 
We say that $X$ is a \emph{(relatively) Hardy-Littlewood} variety (with respect to the gauge form $\omega_X$) with density $\delta_{X}$, after Borovoi-Rudnick \cite[Definition 2.2]{Borovoi-Rudnick} (see also \cite[p. 143]{Duke-Rudnick-Sarnak}), if for any adelic neighbourhood $B=B_\infty\times B_f$ as before, we have $\m_\infty^X(B_\infty(T))\to\infty$ and \begin{equation}\label{eq:relHL}
	N_X(B;T)\sim \int_{B_\infty(T)\times B_f}\delta_X \operatorname{d}\m^X,  \quad T\to\infty.
\end{equation} 
A Hardy-Littlewood variety $X$ is called \emph{strongly Hardy-Littlewood} if $\delta_{X}\equiv 1$. 
That is, for any such $B$ as before, we have \begin{equation}\label{eq:strHL}
	N_X(B;T)\sim \int_{B_\infty(T)\times B_f} \operatorname{d}\m^X= \m_\infty^X(B_\infty(T))\m_f^X(B_f),\quad T\to\infty.
\end{equation} 
We may view the function $\delta_{X}$ as an effective measure of the failure of strong approximation on $X$. If $X(\BQ)=\varnothing$, then clearly $X$ is Hardy-Littlewood with density $\delta_{X}\equiv 0$. We shall henceforth focus on the case where $X(\BQ)\neq\varnothing$.
See notably works \cite{Duke-Rudnick-Sarnak,EM,EMS}, \cite[\S6]{Borovoi-Rudnick} and more recent ones \cite{Nevo-Sarnak,Gorodnik-Nevo,Browning-Gorodnik} for various examples of homogeneous spaces satisfying the Hardy-Littlewood property. \footnote{In all known examples, the counting function $N_X(B;T)$ behaves like $T^{d}(\log T)^e$ (depending on the embedding $X\hookrightarrow \BA^n_\BQ$), where $d>0,e\geqslant 0$ are rational numbers.}

The following is a natural extension and refinement of the Hardy-Littlewood property in the spirit of Wittenberg's question.
\begin{question}\label{q:countingpurity}
Assume that $X$ is a Hardy-Littlewood variety with density $\delta_{X}$. Are all open subsets $U\subset X$ also Hardy-Littlewood (with respect to the gauge form $\omega_X|_U$) with density $\delta_{X}|_U$, whenever $\codim_{X}(X\setminus U)\geqslant 2$?
\end{question}
\begin{definition}
	We say that $X$ satisfies \emph{arithmetic purity of the Hardy-Littlewood property} with density $\delta_{X}$, abbreviated as \APHL\ with density $\delta_{X}$, if $X$ is a Hardy-Littlewood variety with density $\delta_{X}$ and Question \ref{q:countingpurity} has a positive answer for $X$.

Similarly, we say that $X$ satisfies \emph{arithmetic purity of the strongly Hardy-Littlewood property}, abbreviated as \APSHL, if $X$ satisfies \APHL\ with density $\delta_{X}\equiv 1$.
\end{definition}
It is proved by Borovoi-Rudnick \cite[Proposition 2.5]{Borovoi-Rudnick} (resp. \cite[Proposition 2.4]{Borovoi-Rudnick}) that being relatively (resp. strongly) Hardy-Littlewood implies a weaker version of (resp. the usual) strong approximation for $X$. In particular, the condition \eqref{eq:condcodim} is necessary for $U$ to be Hardy-Littlewood. Moreover, we shall prove that (cf. Proposition \ref{prop:restrictionTamagawa} in $\S$\ref{se:normalisedTamagawameasure}) this condition guarantees that the restriction of the Tamagawa measure $\m^X$ on $U(\RA)$ is well-defined. To get an instructive idea, let us take $\CX$ an integral model of $X$ over $\BZ$, and let $\CZ$ be the Zariski closure of $Z:=X\setminus U$ in $\CX$ and let $\CU:=\CX\setminus\CZ$ be the integral model of $U$. For simplicity we assume that $\CU(\widehat{\BZ}):=\prod_{p<\infty} \CU(\BZ_p)\neq\varnothing$. In particular $\CU(\BF_p)\neq\varnothing$ for all $p$. Write $$\tau_p(\CU,\CX):=\frac{\#\CU(\BF_p)}{\#\CX(\BF_p)}.$$ We may interpret the quantity $\tau_p(\CU,\CX)$ as the probability that an integral point of $\CX$ specializes to a point in $\CU$ modulo $p$. Applying the Lang-Weil estimate (cf. \eqref{eq:Lang-WeilFp} \eqref{eq:Lang-WeilFpsmooth} \emph{infra}) to both $\CX$ and $\CZ$, we get
\begin{equation}\label{eq:LWcp}
	1-\tau_p(\CU,\CX)=\frac{\#\CZ(\BF_p)}{\#\CX(\BF_p)}=O\left(\frac{1}{p^{\codim_X(Z)}}\right),
\end{equation}
where the implied constant is uniform for any prime $p$. Roughly speaking, we calculate the finite part measure $\m_f^X|_U$ of $\CU(\widehat{\BZ})$ as being the limit of $\m_f^X(\prod_{p<M}\CU(\BZ_p)\times\prod_{p\geqslant M}\CX(\BZ_p))$ as $M\to \infty$. This turns out to require that infinite product over almost all $p$ of $\tau_p(\CU,\CX)$ is absolutely convergent, which is true whenever \eqref{eq:condcodim} holds thanks to \eqref{eq:LWcp}.
As removing a proper closed subset does not affect the real volume, we expect in particular that if the counting function $N_U(U(\BR)\times \CU(\widehat{\BZ});T)$ for $U$ is non-zero when $T\to\infty$, then \begin{equation}\label{eq:expNUNX}
	N_U(U(\BR)\times \CU(\widehat{\BZ});T)\sim c_{\CU,\CX} N_X(X(\BR)\times \CX(\widehat{\BZ});T),
\end{equation} where $c_{\CU,\CX}>0$ is related the infinite product of $\tau_p(\CU,\CX)$.
On the other hand,  we do not expect that, amongst other reasons (e.g., the existence of non-constant invertible functions, cf. \cite[Lemma 1.5.2]{Borovoi-Rudnick}), $N_U(U(\BR)\times \CU(\widehat{\BZ});T)$ could have the same magnitude of growth as $N_X(X(\BR)\times \CX(\widehat{\BZ});T)$ when $\codim_{X}(X\setminus U)=1$, as the infinite product over $\tau_p(\CU,\CX)$ would diverge to $0$. For related Schinzel-type conjectures however, see  \cite[(1.3)--(1.5)]{Nevo-Sarnak}.
 
Integral points of $\CX$ lying in such open subsets $\CU$ satisfy infinitely many congruence conditions, so that $\CU(\widehat{\BZ})\subset X(\RA)$ is neither open nor closed. To derive \eqref{eq:relHL} or \eqref{eq:strHL} for $U$, this is one difficulty  we need to get rid of. For instance, if $\CZ$ is defined by two regular functions $f,g\in\BZ[\CX]$, then $\CU(\BZ)$ consists of precisely the points $\XX\in \CX(\BZ)$ such that either $p\nmid f(\XX)$ or $p\nmid g(\XX)$ for any prime $p$, i.e.
$$\CU(\BZ)=\{\XX\in\CX(\BZ): \gcd(f(\XX),g(\XX))=1 \}.$$

\subsection{Results on \APHL}
Motivated by recent progress \cite{CLX,Cao-Huang} on \APSA\ for semisimple simply connected groups, the purpose of this article is to go beyond the affine spaces previously considered in \cite{Ekedahl,Poonen,Bhargava} and provide a positive answer to Question \ref{q:countingpurity} for affine quadrics, by developing a version of \emph{geometric sieve} for them. 
\subsubsection{Affine spaces}
The geometric sieve was first inaugurated by Ekedahl \cite{Ekedahl} when dealing with $X=\BA^n$. Further generalised by Poonen \cite[Theorem 3.1]{Poonen} and Bhargava \cite[\S3]{Bhargava}, this sieve method has demonstrated surprising applications on the density of square-free polynomial values in various circumstances. Their results indeed prove, on plugging-in the classical Chinese remainder theorem: \begin{theorem}[Ekedahl, Poonen, Bhargava]
	Affine spaces satisfy \APSHL.
\end{theorem}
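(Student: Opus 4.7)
The plan is to combine the elementary fact that $\BA^n$ itself is strongly Hardy-Littlewood with the geometric sieve developed by Ekedahl, Poonen, and Bhargava. For $X=\BA^n$, the asymptotic is straightforward: any compact open $B_f$ reduces via the Chinese remainder theorem to a union of residue classes modulo some integer $N$, and classical lattice point counting in the expanding real domain $B_\infty(T)$ within each such class yields
\[
N_{\BA^n}(B;T)\sim \m_\infty^{\BA^n}(B_\infty(T))\,\m_f^{\BA^n}(B_f).
\]

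For the arithmetic purity assertion, I would fix an open $U=\BA^n\setminus Z$ with $\codim_{\BA^n}Z\geqslant 2$, an integral model $\CU=\BA^n_{\mathbf{Z}}\setminus\CZ$, and an adelic neighbourhood $B=B_\infty\times B_f\subset U(\RA)$. Choosing a truncation parameter $M$, extend $B_f$ to an adelic neighbourhood $\widetilde{B}_f^M$ of $\BA^n(\RA_f)$ by imposing the conditions of $B_f$ at primes dividing some fixed modulus and the integrality condition $\XX\in\CU(\BZ_p)$ only for $p\leqslant M$ (with no constraint at larger primes). Then decompose
\[
N_U(B;T)=N_{\BA^n}(B_\infty\times \widetilde{B}_f^M;T)-\#\CE_M(T),
\]
where $\CE_M(T)$ counts integer points in the box that satisfy the $p\leqslant M$ conditions yet reduce into $\CZ(\BF_p)$ at some prime $p>M$.

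The technical core is bounding $\#\CE_M(T)$, and this splits into two prime regimes. For medium primes $M<p\leqslant T$, the Lang--Weil bound \eqref{eq:LWcp} gives $\#\CZ(\BF_p)\ll p^{n-2}$, and each residue class modulo $p$ contains $O((T/p)^n)$ integer points of norm $\leqslant T$, producing total contribution $\ll T^n\sum_{p>M}p^{-2}\ll T^n/M$. The main obstacle is the large-prime regime $p>T$; here I would invoke the Ekedahl--Poonen--Bhargava geometric sieve, which after covering $\CZ$ (possibly shrinking it) by complete intersections $\{f=g=0\}$ of pairs of coprime hypersurfaces, exploits the size bound $|f(\XX)|,|g(\XX)|\ll T^{O(1)}$ together with Lang--Weil estimates at intermediate primes, concluding that the large-prime contribution is $o(T^n)$ uniformly as $T\to\infty$ followed by $M\to\infty$.

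Combining, $\#\CE_M(T)=O(T^n/M)+o(T^n)$, so that
\[
N_U(B;T)=N_{\BA^n}(B_\infty\times \widetilde{B}_f^M;T)+O(T^n/M)+o(T^n).
\]
Since $\BA^n$ is strongly Hardy-Littlewood, $N_{\BA^n}(B_\infty\times\widetilde{B}_f^M;T)\sim \m_\infty^{\BA^n}(B_\infty(T))\,\m_f^{\BA^n}(\widetilde{B}_f^M)$ as $T\to\infty$. Letting $M\to\infty$, the infinite product $\prod_{p>M}\tau_p(\CU,\BA^n)$ converges absolutely by \eqref{eq:LWcp}, so $\m_f^{\BA^n}(\widetilde{B}_f^M)\to \m_f^U(B_f)$ in the sense of Proposition \ref{prop:restrictionTamagawa}, yielding the desired strongly Hardy-Littlewood asymptotic $N_U(B;T)\sim \m_\infty^U(B_\infty(T))\,\m_f^U(B_f)$.
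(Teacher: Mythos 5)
The paper offers no proof of this theorem: it is stated as a consequence of the cited works of Ekedahl, Poonen, and Bhargava together with the Chinese remainder theorem, and the remainder of the paper is devoted to extending the argument to affine quadrics rather than to affine spaces. Your reconstruction is therefore not comparable line-by-line with the paper, but it is a correct and natural derivation along the lines the paper alludes to, and it mirrors the paper's general two-hypothesis framework (Theorem~\ref{thm:HLtwistintegral}): verify effective equidistribution with congruence conditions (here, elementary lattice-point counting with CRT) plus a geometric sieve condition (here, the Ekedahl--Poonen--Bhargava bound). One remark to make your two-step limit fully rigorous: when you write $\#\CE_M(T)=O(T^n/M)+o(T^n)$, the $o(T^n)$ term must be uniform in $M$ (or at least have a dependence on $M$ that is controlled), since you let $T\to\infty$ before $M\to\infty$. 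Bhargava's quantitative version (the one recorded as Theorem~\ref{le:Ekedahl}(2) in the paper) gives exactly this: the bound $O(T^n/(M^{k-1}\log M)+T^{n-k+1})$ for codimension $k\geqslant 2$ is uniform in $M$, with the $T^{n-k+1}=O(T^{n-1})$ term harmless. Citing Bhargava's inequality explicitly also removes the need to split the primes into medium and very-large ranges, since it already covers all $p\geqslant M$; your medium-prime Lang--Weil computation is correct but is in fact part of how one proves that inequality.
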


\subsubsection{Affine quadrics}
Let $q(\xx)\in\BZ[x_1,\cdots,x_n] $ be a non-degenerate integral quadratic form in $n\geqslant 3$ variables. Suppose that $q(\xx)$ is indefinite (i.e. isotropic over $\BR$). For $m\in\BZ_{\neq 0}$, let us consider the $\BZ$-scheme
\begin{equation}\label{eq:integralmodelaffinequadric}
\mathcal{Q}:=(q(\xx)=m)\subset \BA^n_\BZ.
\end{equation}
The smooth geometrically integral $\BQ$-variety 
\begin{equation}\label{eq:affinequadric}
	\mathbf{Q}:=\mathcal{Q}\times_\BZ\BQ\subset\BA^n_\BQ
\end{equation} is called an \emph{affine quadric}.
In particular, $\CQ$ is an integral model of $\BFQ$. Recall that affine quadrics with $n\geqslant 4$ variables satisfy the integral Hasse principle and strong approximation, while there can be Brauer-Manin obstruction when $n=3$ (cf. \cite[\S5.6, \S5.8]{CT-Xu}).
We shall always assume in this article that $\BFQ(\BQ)\neq\varnothing$. We remark that rational points on an affine quadric form a \emph{thin} subset of $\BA^n(\BQ)$.

The variety $\BFQ$ is an affine symmetric space (cf. \cite[p. 59]{Borovoi-Rudnick}, \cite[p. 1045]{Browning-Gorodnik}) under the spin group $G:=\Sp_q$ (the universal double covering of $\SO_q$, with the standard almost faithful representation through $\SO_q$ in $\GL_{n,\BQ}$). Let $P\in\BFQ(\BQ)$ and let $H$ be its stabilizer. Then $H\cong \Sp_q|_{P^\perp}$ is a symmetric subgroup of $\Sp_q$, where $P^\perp$ is the orthogonal complement of $P$. The real locus $\BFQ(\BR)$ has no compact connected components. 
Since $n\geqslant 3$, the group $G$ is always semisimple and simply connected, so is $H$ if $n\geqslant 4$. Special attention is drawn to the case where $n=3$ because $H$ is isomorphic to a torus. It is anisotropic over $\BQ$ (hence has no non-trivial $\BQ$-characters) precisely when $-m\det(q)$ is not a square, a condition that we shall always assume and denote by $-m\det(q)\neq \square$ in the sequel.
The work of Borovoi-Rudnick \cite{Borovoi-Rudnick} proves that 
\begin{itemize}
	\item Affine quadrics with $n\geqslant 4$ variables are strongly Hardy-Littlewood (cf. \cite[Theorem 0.3]{Borovoi-Rudnick});
	\item Affine quadrics with $n=3$ variables are Hardy-Littlewood with a locally constant density function $\delta_{\BFQ}:\BFQ(\RA)\to \{0,2\}$ (cf. \cite[p. 1047-1048]{Browning-Gorodnik}, \cite[\S3]{Borovoi-Rudnick}, see also $\S$\ref{se:homogeneousspaces} \emph{infra}).
\end{itemize}
We recall the well-known asymptotic growth of integral points on affine quadrics (cf. e.g. \cite[(2.6)]{Liu-Sarnak}, \cite[(1.9)]{Duke-Rudnick-Sarnak} and \cite[p. 1047]{Browning-Gorodnik}). For every connected component $B_\infty\subset\BFQ(\BR)$,
\begin{equation}\label{eq:globalgrowthCQ}
	\#\{\XX\in \CQ(\BZ)\cap B_\infty:\|\XX\|\leqslant T\}\sim\int_{B_\infty(T)\times \CQ(\widehat{\BZ})}\delta_{\BFQ} \operatorname{d}\m^{\BFQ}\sim c_{\CQ,B_\infty}T^{n-2},
\end{equation} where $c_{\CQ,B_\infty} \geqslant 0$ depends on $\CQ,B_\infty$. 

Our main results are the following.
\begin{theorem}\label{thm:ngeq4}
	Let $\BFQ\subset\BA^n_\BQ$ be the affine quadric \eqref{eq:affinequadric}  with $n\geqslant 4$ variables. Then $\BFQ$ satisfies \APSHL.
\end{theorem}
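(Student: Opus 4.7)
The plan is a geometric sieve in the spirit of Ekedahl--Poonen--Bhargava on the affine quadric $\BFQ$. The strong Hardy--Littlewood asymptotic for $\BFQ$ due to Borovoi--Rudnick serves as the black-box main term; the substantive work is to control the infinitely many congruence conditions that cut out $\CU(\BZ)$ inside $\CQ(\BZ)$ by a sieve tail bound tailored to $\BFQ$. Let $U\subset\BFQ$ be open with $Z:=\BFQ\setminus U$ of codimension $\geqslant 2$, let $\CZ\subset\CQ$ be its Zariski closure and $\CU:=\CQ\setminus\CZ$. After standard reductions we may assume the adelic box $B^U=B_\infty^U\times B_f^U\subset U(\RA)$ has $B_f^U=\prod_p W_p$ with $W_p\subset\CU(\BZ_p)$ compact open and $W_p=\CU(\BZ_p)$ for $p$ outside a finite set $S$.

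For a parameter $M\to\infty$ introduce the truncated box
$$B_M:=B_\infty^U\times\prod_{p\leqslant M}W_p\times\prod_{p>M}\CQ(\BZ_p)\subset\BFQ(\RA),$$
and write the basic identity
$$N_U(B^U;T)=N_{\BFQ}(B_M;T)-E(M,T),$$
where $E(M,T)$ counts $\xx\in\CQ(\BZ)\cap B_M$ with $\|\xx\|\leqslant T$ such that $\xx\bmod p\in\CZ(\BF_p)$ for some $p>M$. For each fixed $M$, Borovoi--Rudnick applied to $B_M$ gives
$$N_{\BFQ}(B_M;T)\sim\m_\infty^{\BFQ}(B_\infty^U(T))\cdot\m_f^{\BFQ}\Bigl(\prod_{p\leqslant M}W_p\times\prod_{p>M}\CQ(\BZ_p)\Bigr),\quad T\to\infty,$$
and the absolute convergence of the Tamagawa restriction (Proposition \ref{prop:restrictionTamagawa}) forces the finite-part factor to tend to $\m_f^U(B_f^U)$ as $M\to\infty$, producing the target main term. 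The theorem thus reduces to the sieve estimate
$$\lim_{M\to\infty}\limsup_{T\to\infty}\frac{E(M,T)}{T^{n-2}}=0.$$

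Bounding $E(M,T)\leqslant\sum_{p>M}N_p(T)$ with $N_p(T):=\#\{\xx\in\CQ(\BZ)\cap B_\infty^U:\|\xx\|\leqslant T,\ \xx\bmod p\in\CZ(\BF_p)\}$, I would split the sum at thresholds into small, intermediate and large primes. For small primes $p\leqslant T^\alpha$ a uniform-in-$p$ asymptotic for $\CQ$ in arithmetic progressions of the expected shape $T^{n-2}\,\#\CZ(\BF_p)/p^{n-1}$ plus error, obtained via a Kloosterman-refined circle method on $\BFQ$, combined with the Lang--Weil bound $\#\CZ(\BF_p)\ll p^{n-3}$, yields a per-prime contribution $\ll T^{n-2}/p^{1+\delta}$, summable in $p>M$. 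For large primes $p>T^\beta$ the congruence forces $\xx$ to lie on an integral lift of $\CZ(\BF_p)$, and a dimension argument using $\dim Z\leqslant n-3$ makes the total contribution lower order. The main obstacle is the intermediate range, where neither the harmonic-analytic nor the purely geometric argument is fully effective; this is precisely the geometric sieve adapted to affine quadrics that must be developed in a dedicated section. The hypothesis $n\geqslant 4$ is indispensable, both so that the spin group $\Sp_q$ and its point stabilizer are semisimple simply connected and the orbital/exponential-sum analysis converges absolutely, and so that the $n-2$ exponent from \eqref{eq:globalgrowthCQ} dominates the codimension-two sieve losses; together with $\codim_{\BFQ}Z\geqslant 2$ (which renders the Tamagawa restriction well-defined and the large-prime contribution summable via \eqref{eq:LWcp}) this closes the argument.
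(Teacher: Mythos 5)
Your high-level outline matches the paper's architecture in broad strokes: reduce to a sieve tail estimate $\lim_{M\to\infty}\limsup_{T\to\infty}E(M,T)/T^{n-2}=0$, and split the sum over primes $p>M$ into small, intermediate and very large ranges. The reduction of the main term to a fixed-$M$ Borovoi--Rudnick asymptotic followed by an $M\to\infty$ limit is sound (absolute convergence of the Tamagawa product as in Proposition~\ref{prop:restrictionTamagawa} does the rest). The paper actually routes this reduction through the abstract criterion of Theorem~\ref{thm:HLtwistintegral}, which packages hypotheses (i) strong effective equidistribution and (ii) geometric sieve, and then verifies both for all twisted integral models $\CQ_a$; your more direct truncation scheme is equivalent in spirit for the present case. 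However, there are two genuine gaps in the technical execution.

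First, for the intermediate range $p\in(T^\alpha,T)$ you explicitly acknowledge that you do not have an argument and that this ``must be developed in a dedicated section.'' This is precisely where the substantive work lies. The paper handles it with a slicing argument plus the uniform Serre/Browning--Gorodnik bound for integral points on affine quadrics (their Proposition 5.1, recorded here as Proposition~\ref{prop:intern4}), giving $V_p(T;\overline{\xi})\ll_\varepsilon (T/p)^{n-3+\varepsilon}(1+T/p^{n/(n-1)})$ uniformly in $p\leqslant T$ and $\overline{\xi}\in\BF_p^n$; summing over $p\in[T^\alpha,T]$ with $\#\CZ(\BF_p)\ll p^{n-3}$ then yields $O(T^{n-2}/\log T)$. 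Without some such input, the middle range is not controlled.

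Second, your ``dimension argument'' for $p>T^\beta$ does not actually produce a usable bound. Having $\xx\bmod p\in\CZ(\BF_p)$ does not place $\xx$ on an integral lift of $\CZ$; it only says $p\mid f(\XX)$ and $p\mid g(\XX)$ for two defining polynomials $f,g$ of $\CZ$. For a fixed $\XX$ with $f(\XX)g(\XX)\neq 0$ there are indeed at most $O(1)$ primes $>T$ dividing $\gcd(f(\XX),g(\XX))$, but that only gives the trivial bound $O(T^{n-2})$ after summing over $\XX$. The correct statement needed is that the number of $\XX\in\CQ(\BZ)$ with $\|\XX\|\leqslant T$ such that $\gcd(f(\XX),g(\XX))$ has a prime factor $\geqslant T$ is $o(T^{n-2})$, and this does not follow from dimension counting because $\CQ(\BZ)$ is a thin set, so Ekedahl's sieve on $\BA^n$ does not apply directly. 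The paper's Theorem~\ref{th:largeprimes} circumvents this with a fibration $\pr_3\colon\CQ\to\BA^{n-2}$: one applies Bhargava's quantitative Ekedahl sieve (Theorem~\ref{le:Ekedahl}) to suitable codimension-two subschemes of the \emph{base} $\BA^{n-2}$, and then controls the remaining sum by the number of $\XX\in\BZ^{n-2}$, $\|\XX\|\leqslant T$, for which $m-\sum a_iX_i^2$ is represented by a positive-definite binary form, via a half-dimensional sieve (Friedlander--Iwaniec, Theorem~\ref{thm:halfdimsieve}), giving the $T^{n-2}/\sqrt{\log T}$ saving. Your proposal omits both the fibration and the half-dimensional sieve, which are the genuinely new inputs here.

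A smaller point: for the small-prime range you propose a Kloosterman-refined circle method to produce uniform asymptotics in arithmetic progressions. The paper instead uses effective equidistribution (Nevo--Sarnak, Gorodnik--Nevo, Browning--Gorodnik) with error $O(l^{\sigma}\m_\infty(T)^{1-\beta})$ uniform in the level $l$ and residue $\xi$, fed into Corollary~\ref{co:uppercodim2}. A delta-method treatment is plausible for $n\geqslant 5$ (and with care for $n=4$), but you would still need to verify the uniformity in the modulus and the shape of the per-prime error term; this is not executed. Finally, your explanation of why $n\geqslant 4$ matters is partially off: the role of $n\geqslant 4$ is that the stabilizer $H\cong\Sp_q|_{P^\perp}$ is semisimple simply connected, making $\delta_{\BFQ}\equiv 1$ (i.e.\ $\BFQ$ is \emph{strongly} Hardy--Littlewood); the $T^{n-2}$ growth from \eqref{eq:globalgrowthCQ} behaves the same way for $n=3$ and does not by itself explain the hypothesis.
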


\begin{theorem}\label{thm:n=3}
	Let $\BFQ\subset\BA^n_\BQ$ be the affine quadric \eqref{eq:affinequadric} with $n=3$ variables. Suppose that the form $q$ is anisotropic over $\BQ$ and that $-m\det(q)\neq\square$. Then $\BFQ$ satisfies \APHL\ with density $\delta_{\BFQ}$.
\end{theorem}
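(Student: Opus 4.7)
The approach combines the (relative) Hardy-Littlewood asymptotic for $\BFQ$ established by Borovoi-Rudnick with a geometric sieve for affine quadrics, the latter being developed as the main technical tool of this paper. Since $\dim\BFQ=2$, the hypothesis $\codim_{\BFQ}(\BFQ\setminus U)\geqslant 2$ forces $Z:=\BFQ\setminus U$ to be a finite set of closed points. Let $\CZ$ denote its Zariski closure in $\CQ$ and $\CU:=\CQ\setminus\CZ$. Using local constancy of $\delta_{\BFQ}$, we may refine $B_f$ so that $\delta_{\BFQ}$ is constant on $B_\infty\times B_f$; after such decomposition and reducing to the case $B_f=\CU(\widehat{\BZ})$, the goal is
$$N_U(B;T)\sim\int_{B_\infty(T)\times\CU(\widehat{\BZ})}\delta_{\BFQ}\,\operatorname{d}\m^{\BFQ}.$$

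Introduce a sieve parameter $M$ and the partially-truncated adelic neighbourhood $B^{(M)}:=B_\infty\times\prod_{p\leqslant M}\CU(\BZ_p)\times\prod_{p>M}\CQ(\BZ_p)$. For $\CE_p(T)$ the set of $\XX\in\CQ(\BZ)\cap B_\infty(T)$ satisfying the congruences in $B^{(M)}$ at primes $\leqslant M$ and reducing into $\CZ(\BF_p)$, one has
$$N_{\BFQ}(B^{(M)};T)-\#\bigcup_{p>M}\CE_p(T)\leqslant N_U(B;T)\leqslant N_{\BFQ}(B^{(M)};T).$$
For each fixed $M$, the Borovoi-Rudnick asymptotic applied to $B^{(M)}$, combined with the absolute convergence of $\prod_p\tau_p(\CU,\CQ)$ (a consequence of \eqref{eq:LWcp} and $\codim_{\BFQ}Z\geqslant 2$), shows that the right-hand side is asymptotic to the claimed main term, up to an error that vanishes after normalising by $T^{n-2}=T$ and sending $M\to\infty$.

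It thus remains to establish the sieve estimate
$$\limsup_{T\to\infty}T^{-1}\#\bigcup_{p>M}\CE_p(T)\longrightarrow 0\quad\text{as }M\to\infty.$$
Following the Ekedahl-Poonen-Bhargava paradigm, we split the primes $p>M$ into medium primes $M<p\leqslant T^\alpha$ and large primes $p>T^\alpha$ for a suitable $\alpha\in(0,1)$. For medium primes, a level-of-distribution estimate---namely, a uniform-in-$p$ Hardy-Littlewood asymptotic for $\BFQ$ with congruence conditions $\XX\equiv z\pmod p$, $z\in\CZ(\BF_p)$---combined with $\#\CZ(\BF_p)=O(1)$, bounds the contribution by $O\bigl(T\sum_{p>M}p^{-2}\bigr)=O(T/M)$. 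For large primes, a direct geometric argument in the style of Ekedahl and Bhargava, exploiting that $\CZ$ has codimension two, controls the residual contribution by $o(T)$.

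The principal obstacle is the medium-prime range when $n=3$, since the main term in \eqref{eq:globalgrowthCQ} is only of magnitude $T$: the level-of-distribution estimate must therefore remain valid uniformly for $p$ up to a small positive power of $T$, with errors that genuinely save a power of $p$ over trivial bounds. This requires a quantitative refinement of the circle method or spectral input of the type behind \cite{Duke-Rudnick-Sarnak}, yielding square-root-type cancellation in the congruence classes, and constitutes the technical heart of the geometric sieve for ternary affine quadrics developed in this paper. The extra hypotheses that $q$ is $\BQ$-anisotropic and $-m\det(q)\neq\square$ ensure that the stabilizer $H$ is an anisotropic torus, which is needed to legitimately invoke the Hardy-Littlewood property with density $\delta_{\BFQ}$ in the first place and to avoid accumulation on $H$-orbits in the sieve analysis.
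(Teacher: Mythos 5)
Your high-level skeleton — reduce to congruence neighbourhoods, write $N_U$ sandwiched between $N_{\BFQ}(B^{(M)};T)$ and $N_{\BFQ}(B^{(M)};T)$ minus a sieve error, let $M\to\infty$ — matches the paper's strategy (Proposition \ref{prop:sec2integralHL}, Corollary \ref{co:appromainterm}, Theorem \ref{thm:HLtwistintegral}). However, your sieve analysis has a gap that is precisely where the technical core of the paper lies, and your account of where the $n=3$ hypotheses enter is misplaced.

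First, the paper splits the prime moduli into \emph{three} regimes, not two: small primes $[M,T^{\alpha}]$, intermediate primes $(T^{\alpha},T]$, and very large primes $(T,\infty)$. You collapse the last two. For the first range the paper uses the strong effective equidistribution input (uniformly in the modulus) to obtain the $O(T/M^{\codim Z -1})$ saving, as you essentially describe. But for large primes $p>T$, an Ekedahl--Bhargava argument alone is \emph{not} enough: the lattice points here live on a thin subset (the quadric) of $\BZ^n$, and the Ekedahl geometric sieve quantifies density inside all of $\BZ^L$, not inside a proper subvariety. The paper's fix — after reducing $Z$ to $\BFQ\cap(f=g=0)$ with $f,g$ in fewer coordinates and projecting along the last two — is to apply Ekedahl--Bhargava on an affine space of smaller dimension \emph{combined with} the half-dimensional sieve of Friedlander--Iwaniec (Theorem \ref{thm:halfdimsieve}), which controls how often the fibre equation $Q_2(u,v)=Q_1(\XX)$ is solvable. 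This half-dimensional sieve input, which produces the $T^{n-2}/\sqrt{\log T}$ term, does not appear at all in your proposal. Without it the "direct geometric argument ... controls the residual contribution by $o(T)$" does not go through.

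Second, you locate the $n=3$ difficulty in the medium-prime range, ascribing it to "a quantitative refinement of the circle method or spectral input ... yielding square-root-type cancellation in the congruence classes." In fact the range $[M,T^{\alpha}]$ is treated uniformly for all $n\geqslant 3$ (Theorem \ref{thm:primepoly}). The $n=3$-specific difficulty sits in the \emph{intermediate} range $(T^{\alpha},T]$: the slicing argument of Browning--Gorodnik degrades for $n=3$ unless one rules out that slices contain $\BQ$-lines or rank-one affine conics, which is exactly where the hypotheses $q$ being $\BQ$-anisotropic and $-m\det q\neq\square$ are used (Lemma \ref{le:notsquare} and Proposition \ref{prop:intern3}). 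These hypotheses also reappear in the very-large-prime range: the anisotropic-stabiliser condition is what guarantees the Legendre symbols in Theorem \ref{thm:halfdimsieve} are genuinely "independent," giving the sieve dimension $1/2$. Your observation that $H$ must be anisotropic so that $\BFQ$ is Hardy-Littlewood in the first place is correct, but this is only one of three places the hypotheses are needed, and the other two are the ones the paper has to work for.
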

\subsection{A general strategy to achieve \APHL}\label{se:strategy}
In order to prove Theorems \ref{thm:ngeq4} and \ref{thm:n=3}, we develop a road-map, i.e. Theorem \ref{thm:HLtwistintegral}, which comes up with a sufficient criterion towards \APHL\ for general Hardy-Littlewood varieties. It consists of two hypotheses. For convenience of exposition, we fix an $X$ an affine Hardy-Littlewood variety with $\CX$ an affine integral model of $X$ over $\BZ$.

The first one is called \emph{effective equidistribution condition}. We require finer information on the remainder terms of the equivalences \eqref{eq:relHL} and \eqref{eq:strHL} when evaluated at a certain family of finite adelic neighbourhoods $B_f^\CX(\xi,l)$ (cf. \eqref{eq:BfX}, which we call \emph{congruence neighbourhoods} (Definition \ref{def:congneighbourhood})) associated to $l\geqslant 2$ and $\xi=(\xi_p)_{p\mid l}\in \prod_{p\mid l} X(\BQ_p)$. If $\xi\in\prod_{p\mid l}\CX(\BZ_{p})$, then $B_f^\CX(\xi,l)$ can be described in terms of congruence conditions: 
$$(B_\infty\times B_f^\CX(\xi,l))\cap \CX(\BZ)=\{\XX\in\CX(\BZ):\text{ for every }p\mid l,\XX\equiv \xi_p(\mod p^{v_p(l)}) \}.$$ 
The significant feature is that we require the error terms to have polynomial growth on the level of congruence $l$, and that we moreover require the implied constant to be uniform for both $l$ and $\xi$.
This formulation is motivated by recent works on counting integral points with congruence conditions due to Nevo-Sarnak \cite[\S3]{Nevo-Sarnak}, Gorodnik-Nevo \cite[\S6 \& \S7]{GW}, and Browning-Gorodnik \cite[\S2]{Browning-Gorodnik}, \emph{et al}. We utilise this hypothesis to deduce asymptotic formulas of integral points on $\CX$ of bounded height that specialise into an arbitrary subset of residues (cf. Proposition \ref{prop:HLintegralforsubset}), which has applications in two different directions. On the one hand, we can give an ``approximation'' of the Tamagawa measure restricted to a given open subset $U\subset X$ satisfying \eqref{eq:condcodim} (cf. Corollary \ref{co:appromainterm}). On the other hand, it turns out to give satisfactory control for integral points on $\CX$ coming from residues in a given closed subset of codimension at least two modulo certain primes which are not of polynomial growth (cf. Corollary \ref{co:uppercodim2}).

The second hypothesis is called \emph{geometric sieve condition} in the spirit of Ekedahl's work \cite{Ekedahl}, for which our formulation is closer to Bhargava's work \cite{Bhargava}. For any fixed closed $Z\subset X$ of codimension at least two, let $\CZ:=\overline{Z}\subset \CX$. Informally speaking, this hypothesis requires that integral points on $\CX$ arising from residues of $\CZ$ of arbitrarily large prime moduli are negligible. Establishing this hypothesis is often challenging in resolving Question \ref{q:countingpurity}, as we need to sieve out integral points satisfying infinitely many congruence conditions in a thin subset if the variety $X\subset\BA^n$ is not of degree one.

Then Theorem \ref{thm:HLtwistintegral} on the whole says that a Hardy-Littlewood variety $X$ satisfies \APHL, provided that sufficiently many integral models $\CX_a$ of $X$ constructed via ``rescaling the coordinates by $a\in\BN_{\neq 0}$'' (cf. \eqref{eq:CXa} in \S\ref{se:HLforintegralmodel} and \eqref{eq:CXaaffine} in $\S$\ref{se:applicationofequitoaffinevar}) all satisfy these two hypotheses.
Inspired by various results on counting lattice points in homogeneous spaces in a vast literature, we verify the effective equidistribution condition for certain \emph{nice} homogeneous spaces. This implies that to achieve \APHL\ for such varieties, it remains to verify the geometric sieve condition (cf. $\S$\ref{se:homogeneousspaces}).
\subsection{The geometric sieve for affine quadrics}
The technical core of our paper is the following extension of Ekedahl's geometric sieve \cite{Ekedahl} to affine quadrics, which seems to be the first example for affine varieties besides the affine spaces. 
\begin{theorem}\label{thm:geomsieve}
	Let $\BFQ$ ba an affine quadric satisfying the hypotheses of Theorems \ref{thm:ngeq4} and \ref{thm:n=3}. Fix an integral model $\CQ$ of $\BFQ$.
	For any closed subset $Z\subset \BFQ$ such that $\codim_{\BFQ}(Z)\geqslant 2$, let $\CZ$ be the Zariski closure of $Z$ in $\CQ$. Then uniformly for any $M\geqslant 2$, as $T\to\infty$, we have 
	$$\#\{\XX\in \mathcal{Q}(\BZ):\|\XX\|\leqslant T,\exists p\geqslant M, \XX\mod p\in \CZ(\BF_p)\}=O\left(T^{n-2}\left(\frac{1}{M^{\codim_{\BFQ}(Z)-1}\log M}+\frac{1}{\sqrt{\log T}}\right)\right),$$
	where the implied constant depends only on $\mathcal{Q}$ and $Z$.
\end{theorem}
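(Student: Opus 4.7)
The plan is to decompose the range of primes $p \geq M$ at a threshold $M_1 = M_1(T)$ into a ``small'' range $[M, M_1]$ and a ``large'' range $(M_1, \infty)$, bounded separately; the two contributions correspond to the two error terms $M^{-(\codim_\BFQ Z - 1)}$ and $(\log T)^{-1/2}$ in the statement. The optimal choice turns out to be $M_1 \asymp \sqrt{\log T}$ (or a small power of $\log T$, depending on the $p$-dependence of the error in the equidistribution input).

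For the small primes $p \in [M, M_1]$, I would invoke the strong effective equidistribution condition described in Section \ref{se:strategy}, which for the affine quadric $\BFQ \cong \Sp_q / H$ is known via the spectral/ergodic theory of the spin group action (Duke-Rudnick-Sarnak, Eskin-McMullen, Browning-Gorodnik). This provides, for each prime $p$ and each residue class $\bar\xi \in \CQ(\BF_p)$, an asymptotic formula for the count of integral points of height $\leq T$ in $B_\infty$ reducing to $\bar\xi$, with an error polynomial in $p$. Summing over $\bar\xi \in \CZ(\BF_p)$ and using Lang-Weil to estimate $\#\CZ(\BF_p) \ll p^{n-1-\codim_\BFQ Z}$ against $\#\CQ(\BF_p) \sim p^{n-1}$, then summing over $p$, yields a main contribution $O(T^{n-2}\sum_{p \geq M} p^{-\codim_\BFQ Z}) = O(T^{n-2}/M^{\codim_\BFQ Z - 1})$ together with an accumulated error $O(M_1^{C}T^{n-2-\delta})$ for fixed $C, \delta > 0$, which is absorbed by $T^{n-2}/\sqrt{\log T}$ once $M_1$ is chosen sufficiently slowly-growing.

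For the large primes $p > M_1$, I would exploit the codimension hypothesis $\codim_\BFQ Z \geq 2$ geometrically. Passing to a finite affine open cover of $\BFQ$, one may assume $Z$ is cut out in $\CQ$ by two regular functions $f_1, f_2 \in \BZ[x_1,\ldots,x_n]$ of bounded degree, so that $\XX \bmod p \in \CZ$ forces $p \mid \gcd(f_1(\XX), f_2(\XX))$. Switching the order of summation, the count is controlled by $\sum_{p > M_1} \#\{\XX \in \CQ(\BZ) \cap B_\infty : \|\XX\| \leq T, p \mid f_1(\XX), p \mid f_2(\XX)\}$. For intermediate primes $M_1 < p \leq T^{(n-2)/(n-1)}$, I would combine the Lang-Weil bound $\#(V(f_1,f_2) \cap \CQ)(\BF_p) = O(p^{n-3})$ with the single-class equidistribution estimate $O(T^{n-2}/p^{n-1})$ per lift to obtain $O(T^{n-2}/p^2)$ per prime, summing to $O(T^{n-2}/M_1)$. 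For very large primes $p > T^{(n-2)/(n-1)}$, the divisibility $p \mid f_i(\XX)$ together with $|f_i(\XX)| \ll T^{\deg f_i}$ either forces $f_i(\XX) = 0$ --- reducing to counting integral points on a proper closed subvariety of $\BFQ$ of codimension~$1$, handled inductively by the Borovoi-Rudnick counting results for lower-dimensional affine quadric sections --- or yields a standard divisor-function bound.

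The main obstacle is the uniform treatment of the large primes, in particular the sub-case where some $f_i(\XX) = 0$: this requires effective upper bounds on integral points of bounded height on codimension-$1$ subvarieties of $\BFQ$, uniform in the geometry of these subvarieties. Secondary orbital integral estimates or an induction on $n$ (invoking the theorem at lower dimensions for hyperplane sections of $\BFQ$) should suffice, but the bookkeeping must be done carefully to keep all implied constants depending only on $\CQ$ and $Z$. A secondary subtlety is that the $p$-dependence of the equidistribution error in the small-prime range must be tight enough to permit summation up to $M_1 \asymp \sqrt{\log T}$ without degrading the final $(\log T)^{-1/2}$ bound.
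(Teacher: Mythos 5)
Your two-range decomposition is structurally different from the paper's three-range decomposition $[M,T^\alpha]\cup[T^\alpha,T]\cup[T,\infty]$, and two of the key techniques are missing, which together constitute genuine gaps.

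First, the treatment of your ``intermediate'' primes $M_1 < p \leq T^{(n-2)/(n-1)}$ does not work as described. You propose to use the single-class equidistribution estimate $O(T^{n-2}/p^{n-1})$ per lift for $p$ in this whole range, but that estimate is only the \emph{main term}; the error term in the effective equidistribution input (from Browning--Gorodnik, used in Theorem~\ref{thm:primepoly}) has the shape $O(p^{\sigma_\CQ} T^{(n-2)(1-\beta_\CQ)})$ with $\sigma_\CQ = \frac{(3n-2)(n-1)}{2}$ quite large. For this error to be smaller than the claimed $T^{n-2}/p^2$ per class, one needs $p \ll T^{(n-2)\beta_\CQ/(\sigma_\CQ+2)}$, which is a tiny power of $T$ --- far below $T^{(n-2)/(n-1)}$. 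This is precisely why the paper introduces a separate intermediate regime $T^\alpha < p \leq T$ (Theorem~\ref{th:intermediateprimes}) handled by a completely different slicing argument: reduce to a lattice $\Gamma_{\overline{\xi}}^p$ of determinant $\asymp p$, pick a minimal basis, slice along $z_3$, and bound each sliced conic via the uniform Serre-type bound of Theorem~\ref{thm:BrowningGorodnikaffinequadric}. For $n=3$, this requires the $\BQ$-anisotropy of $q$ and $-m\det q \neq \square$ to rule out $\BQ$-lines and parabolic slices (Lemma~\ref{le:notsquare}); these hypotheses play no role in your argument, which is a red flag.

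Second, your treatment of the very large primes is also too optimistic. You say that $p \mid f_i(\XX)$ together with $|f_i(\XX)| \ll T^{\deg f_i}$ either forces $f_i(\XX)=0$ or ``yields a standard divisor-function bound.'' The paper's Theorem~\ref{th:largeprimes} does not go this route: after reducing (by a judicious choice of $f,g$ with $f\in\BZ[x_1,\ldots,x_{n-1}]$, $g\in\BZ[x_1,\ldots,x_{n-2}]$) to a fibration $\pr_3\colon\CQ\to\BA^{n-2}_{\BZ}$ whose fibres are conics $u^2+av^2 = m - \sum a_i X_i^2$, it must bound the number of $\XX\in\BZ^{n-2}$ for which the fibre has an integral point at all. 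That count is exactly $\#C(T)$ in the proof, and bounding it by $T^{n-2}/\sqrt{\log T}$ is the content of the half-dimensional sieve Theorem~\ref{thm:halfdimsieve} --- which supplies the eventual $(\log T)^{-1/2}$ saving. Your proposal never confronts the question of which values $m-\sum a_iX_i^2$ are represented by the binary form; an inductive appeal to Borovoi--Rudnick counting on codimension-one sections will not detect this representability constraint and would only yield an error of size $T^{n-2}$, not $T^{n-2}/\sqrt{\log T}$. In short, both the intermediate-prime slicing and the half-dimensional sieve are missing from your plan, and they are the technical core of the theorem.
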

The bound above being uniform for $M$, Theorem \ref{thm:geomsieve} thus furnishes a satisfactory control for integral points on $\CQ$ specialising into $\CZ$ modulo any prime larger than arbitrary $M=M(T)$ which grows to infinity, compared to the order of growth in \eqref{eq:globalgrowthCQ}.
We would like to mention the independent work of Browning and Heath-Brown \cite[Theorems 1.1--1.3]{Browning-HB}. Using a lattice point counting approach which technically differs from ours, they obtain similar results with a power saving of $T$ for the term involving $\frac{1}{\sqrt{\log T}}$, but only applied to projective quadrics (i.e., defined by homogeneous quadratic forms) with at least $5$ variables, whereas our result also applies to quadrics with fewer variables.

Our strategy of proving Theorem \ref{thm:geomsieve} unifies radically different ideas on estimating integral points on quadratic hypersurfaces. We start by breaking the prime moduli $[M,\infty]$ into three subfamilies, and then analyse each of them separately. Primes between $M$ and $T^\alpha$ with $\alpha>0$ small enough to be determined later are dealt with on utilising the effective equidistribution result mentioned in $\S$\ref{se:strategy} (cf. Theorem \ref{thm:primepoly}). 
Primes larger than $T^\alpha$ are divided into two parts: 
$$T^\alpha<p< T\quad \text{ and }\quad p\geqslant T.$$
We name them respectively ``intermediate primes'' and ``very large primes''.
For the intermediate primes (cf. Theorem \ref{th:intermediateprimes}), we appeal to uniform estimates for integral points of bounded height on quadrics in \cite[\S4 \S5]{Browning-Gorodnik} developed originally aiming at studying power-free polynomial values on affine quadrics. This slicing argument can be applied straightforward to the case $n\geqslant 4$ (cf. Proposition \ref{prop:intern4}). However it \emph{a priori} does not provide a desired power saving for the case $n=3$, as was also encountered in \cite[p. 1078]{Browning-Gorodnik}. To overcome this difficulty, we make essential use of the assumptions that $-m\det q$ is non-squared and that the form $q$ is $\BQ$-anisotropic in our argument. A consequence is that (cf. Lemma \ref{le:notsquare}) any slice contains no line or parabolic conic (i.e. affine plane conic of rank one) and thus contributes few integral points (cf. Proposition \ref{prop:intern3}). Note that these assumptions also appear in the results of Liu--Sarnak \cite{Liu-Sarnak}. 
For the treatment of very large primes (cf. Theorem \ref{th:largeprimes}), we make use of Bhargava's effective Ekedahl-type geometric sieve \cite{Bhargava}, and we develop a half-dimensional sieve for affine quadrics (cf. Theorem \ref{thm:halfdimsieve} below). These two sieve methods are matched together via a fibration argument (cf. $\S$\ref{se:er:geometricsieve}). 

\subsection{The half-dimensional sieve for affine quadrics}\label{se:halfsieve}
We establish the following auxiliary result on the density of quadratic polynomial values represented by a binary quadratic form. This is an application of the half-dimensional sieve due to Friedlander-Iwaniec \cite{Iwaniec} \cite{Friedlander-Iwaniec}, which may be of independent interest. 

\begin{theorem}\label{thm:halfdimsieve}
	Let $Q_1(\xx)\in\BZ[x_1,\cdots,x_L]$ be a quadratic polynomial in $L\geqslant 1$ variables, and let $Q_2(\yy)\in\BZ[y_1,y_2]$ be a binary positive-definite non-degenerate quadratic form. 
	\begin{itemize}
		\item If $L\geqslant 2$, assume that the $\BQ$-variety $(Q_1(\xx)=0)\subset \BA^L_\BQ$ is smooth;
		\item If $L=1$, assume that the $\BQ$-variety
		\begin{equation*}
		(Q_1(\xx)-Q_2(\yy)=0)\subset \BA^{3}_\BQ
		\end{equation*}
		is a (smooth) affine quadric, and that it has anisotropic stabilizer. 
	\end{itemize}
 Then
	$$\#\{\XX\in\BZ^{L}:\|\XX\|\leqslant T,\exists (u,v)\in\BZ^2,Q_1(\XX)=Q_2(u,v) \}=O\left(\frac{T^{L}}{\sqrt{\log T}}\right),$$
	where the implied constant depends only on $Q_1,Q_2$.
\end{theorem}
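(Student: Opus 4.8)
The plan is to recast the count as an upper-bound sieve for the quadratic-polynomial sequence $\mathcal{A}=\mathcal{A}(T):=\bigl(|Q_1(\XX)|\bigr)_{\XX\in\BZ^L,\ \|\XX\|\leqslant T}$, with sifting set the primes that are inert in the imaginary quadratic field attached to $Q_2$, and to show that this sieve has dimension $\kappa=\tfrac12$, so that the half-dimensional sieve of Friedlander--Iwaniec \cite{Iwaniec,Friedlander-Iwaniec} yields the gain $(\log T)^{-1/2}$.

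First I record the local obstruction. Let $K:=\BQ\bigl(\sqrt{\disc Q_2}\bigr)$ and let $\mathcal{P}$ be the set of primes $p\nmid 2\disc Q_2$ that are inert in $K$; by Chebotarev $\mathcal{P}$ has density $\tfrac12$. For $p\in\mathcal{P}$ the form $Q_2$ represents over $\BZ_p$ only $p$-adic numbers of even valuation; hence (up to a fixed bounded factor coming from the content of $Q_2$) any positive integer $n$ represented by $Q_2$ over $\BZ$ has $v_p(n)$ even for every $p\in\mathcal{P}$, and only this necessary condition is used. Since $Q_2$ is positive-definite, $Q_1(\XX)=Q_2(\yy)$ forces $Q_1(\XX)\geqslant 0$; the $\XX$ with $Q_1(\XX)=0$ lie on $(Q_1=0)$ and number $O(T^{L-1})$ if $L\geqslant 2$ and $O(1)$ if $L=1$, which is $o\bigl(T^{L}/\sqrt{\log T}\bigr)$. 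So it remains to bound the number of $\XX$ with $\|\XX\|\leqslant T$ such that every $p\in\mathcal{P}$ divides $Q_1(\XX)$ to an even power.

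I then run the sieve with density function $g$, where for squarefree $d$ one sets $g(d):=\rho(d)/d^{L}$ with $\rho(d):=\#\{\XX\bmod d:Q_1(\XX)\equiv 0\}=\prod_{p\mid d}\omega(p)$ and $\omega(p):=\#\{\XX\bmod p:Q_1(\XX)\equiv 0\}$. The input needed is a level-of-distribution estimate
\[
\#\{\XX\in\BZ^L:\|\XX\|\leqslant T,\ d\mid Q_1(\XX)\}=g(d)\,c\,T^{L}+O\bigl(d^{A}T^{L-1+\varepsilon}\bigr)\qquad(d\text{ squarefree}),
\]
for some $c>0$ and absolute $A$, which for $L\geqslant 2$ follows from Poisson summation together with standard bounds on the exponential sums of $Q_1$ modulo $d$ (using smoothness of $(Q_1=0)$), and for $L=1$ is elementary since $\omega(p)\leqslant 2$. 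The dimension is governed by the growth of $\sum_{p\leqslant z,\,p\in\mathcal{P}}\tfrac{\omega(p)}{p^{L-1}}\cdot\tfrac{\log p}{p}$. If $L\geqslant 2$, Lang--Weil applied to the smooth hypersurface $(Q_1=0)$ (which is geometrically integral by hypothesis) gives $\omega(p)=p^{L-1}\bigl(1+O(p^{-1/2})\bigr)$, so this sum is $\sim\tfrac12\log z$ and $\kappa=\tfrac12$. If $L=1$, then $\omega(p)=2$ precisely when $\bigl(\tfrac{\disc Q_1}{p}\bigr)=1$ and $\omega(p)=0$ otherwise; completing the square shows that the hypothesis ``$(Q_1(\xx)-Q_2(\yy)=0)$ has anisotropic stabilizer'' is exactly the condition $\disc(Q_1)\disc(Q_2)\notin(\BQ^\times)^2$, which together with $\disc Q_2<0$ forces the two governing Kronecker symbols to be independent (unless $\disc Q_1\in(\BQ^\times)^2$, an even more favourable degenerate case); hence $\{p\in\mathcal{P}:\omega(p)=2\}$ has density $\tfrac14$ and the sum is $\sim 2\cdot\tfrac14\log z=\tfrac12\log z$, so again $\kappa\geqslant\tfrac12$.

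Finally I apply the upper-bound half-dimensional sieve to $\mathcal{A}$ with sifting limit $z=T^{\delta}$, $\delta>0$ small and fixed, and sifting primes $\mathcal{P}$: the number of $\XX$, $\|\XX\|\leqslant T$, for which $Q_1(\XX)$ has no prime factor in $\mathcal{P}$ below $z$ is $\ll T^{L}\prod_{p\leqslant z,\,p\in\mathcal{P}}\bigl(1-\omega(p)/p\bigr)\ll T^{L}/(\log T)^{1/2}$. To conclude, the $\XX$ for which $Q_1(\XX)$ does have a $\mathcal{P}$-prime factor but all such below $z$ occur to even exponent must be absorbed: writing $Q_1(\XX)=u\,w^{2}$ with $u$ squarefree, representability by $Q_2$ forces $u$ to have no prime factor in $\mathcal{P}$, and summing over $w$ the estimate $\#\{\XX:\|\XX\|\leqslant T,\ w^{2}\mid Q_1(\XX)\}\ll \rho(w^{2})T^{L}/w^{2}+\rho(w^{2})$ (from the level-of-distribution bound) keeps the total within $O\bigl(T^{L}/(\log T)^{1/2}\bigr)$ — this is exactly the square-part bookkeeping already built into the Friedlander--Iwaniec machinery. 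The step I expect to be the main obstacle is pinning the sieve dimension at $\tfrac12$: for $L\geqslant 2$ this is just the density of inert primes once geometric integrality is in hand, but for $L=1$ it rests on the discriminant computation translating ``anisotropic stabilizer'' into the non-squareness of $\disc(Q_1)\disc(Q_2)$, hence into the independence of the quadratic characters deciding when $Q_1(\XX)$ is divisible by an inert prime; a secondary technical point is the level-of-distribution estimate with uniformity over squarefree moduli up to a small power of $T$, which for $L\geqslant 2$ requires a routine but careful analysis of the exponential sums of $Q_1$.
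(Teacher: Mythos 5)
Your overall framework is the same as the paper's: reduce to the condition that every inert prime (your $\mathcal{P}$, the paper's $\CP_{Q_2}$) divides $Q_1(\XX)$ to even order, discard the $O(T^{L-1})$ points with $Q_1(\XX)=0$, and run the Friedlander--Iwaniec half-dimensional sieve with dimension $\tfrac12$, obtained for $L\geqslant 2$ from Lang--Weil ($\varrho(p)=1+O(p^{-1/2})$) against the density-$\tfrac12$ inert primes, and for $L=1$ from the translation of ``anisotropic stabilizer'' into $\disc(Q_1)\disc(Q_2)\neq\square$ and hence independence of the two quadratic characters; all of this matches the paper (which, incidentally, gets the level-of-distribution input elementarily from the Chinese remainder theorem and box counting, $\#\{\XX:\|\XX\|\leqslant T,\ d\mid Q_1(\XX)\}=\omega(d)(2T/d+O(1))^L$ --- no exponential sums are needed at the tiny level required).

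The genuine gap is your absorption of the square part. First, for small square divisors $w^2$ you cannot just add up the counts $\#\{\XX:w^2\mid Q_1(\XX)\}\approx g(w^2)T^L$: summing over $w$ gives $O(T^L)$ with no $(\log T)^{-1/2}$ saving, so you must re-run the half-dimensional sieve on each subsequence $\{Q_1(\XX)/w^2\}$ with the modified density $\varrho_w$, with constants uniform in $w$, and then exploit convergence of $\sum_w g(w^2)$-type factors --- this is exactly what the paper does for $r<T^\gamma$, and it is bookkeeping you must do by hand; it is not ``built into'' the Friedlander--Iwaniec theorem, which is only an upper bound for the sifting function. Second, and fatally, your claimed bound $\rho(w^2)T^L/w^{2L}+\rho(w^2)$ is useless for large $w$: since $\rho(w^2)$ can be of size $w^{2(L-1)}$, the error term summed over $w\leqslant T$ (and $w$ genuinely ranges up to $\asymp T$, because $w^2\leqslant|Q_1(\XX)|\ll T^2$) contributes on the order of $T^{2L-1}$, i.e. $T^{1+\varepsilon}$ already for $L=1$, far beyond $T^L/\sqrt{\log T}$. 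The paper needs a separate geometric argument precisely here: for $w^2>T^{1+2\Delta_L}$ it writes $Q_1(\XX)=sw^2$ with $0\neq s\ll T^{1-2\Delta_L}$ and counts the pairs $(\XX,w)$ as integral points on the auxiliary quadrics $Q_1(\xx)-sz^2=0$, uniformly in $s$, via the Browning--Gorodnik uniform bound $O_\varepsilon(T^{L-1+\varepsilon})$ for irreducible quadratics (this is also where smoothness of $(Q_1=0)$ is used a second time, to guarantee irreducibility of $Q_1(\xx)-sz^2$), together with a crude count in the intermediate range $T^\gamma\leqslant w^2\leqslant T^{1+2\Delta_L}$. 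Without an ingredient of this kind your proof does not close.
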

\begin{remark}\label{rmk:notsquare}
	Without the condition that the stabilizer be anisotropic, the estimate in Theorem \ref{thm:halfdimsieve} is false, as clearly seen from the example $x^2+1=y_1^2+y_2^2$. See also Remark \ref{rmk:isotropicsquare}.
\end{remark}

It would be interesting to ask whether Theorem \ref{thm:n=3} remains true for affine quadrics of dimension two with isotropic stabilizers. A different feature is that the singular series $\prod_{p<\infty}\frac{\#\CQ(\BF_p)}{p^{\dim \BFQ}}$ can diverge, and the order of magnitude of $N_\BFQ(\BFQ(\BR)\times \CQ(\widehat{\BZ});T)$ can be $T\log T$ instead of $T$ (cf. \cite[p. 146]{Duke-Rudnick-Sarnak}). We likewise ask whether condition that the form $q$ be $\BQ$-anisotropic could be dropped. On may run into the similar  pathological phenomenon as in the projective analogue \cite{Lindqvist}.

\subsection{Structure of the paper}
Section \ref{se:HLandequidist} is mostly served for technical preparations. We formulate the effective equidistribution condition and describe its applications. In Section \ref{se:applicationofequitoaffinevar} we prove Theorem \ref{thm:HLtwistintegral}, and then we deduce Theorems \ref{thm:ngeq4} and \ref{thm:n=3} as a consequence. Section \ref{se:errorterms} is entirely devoted to proving Theorem \ref{thm:geomsieve}. Theorem \ref{thm:halfdimsieve} is proved in Section \ref{se:polyresp}.  More layouts are sketched at the beginning of each section.

\subsection{Notation and conventions}\label{se:notation}
Given two real-valued functions $f$ and $g$ with $g$ non-negative, Vinogradov's symbol $f \ll g$ and Landau's symbol $f=O(g)$ both mean that there exists $C>0$ such that $|f|\leqslant Cg$. The dependence of $C$ on the variable and on $f,g$ will be specified explicitly. We use these two symbols interchangeably. We write $f\asymp g$ if $f\ll g$ and $g\ll f$ both hold. If $f,g$ are defined over the real numbers and with $g$ nowhere zero, the small ``$o$'' notation $f(x)=o(g(x))$ means that $\lim_{x\to \infty}\frac{f(x)}{g(x)}=0$. We write $f\sim g$ if $f-g=o(g)$, i.e., $\lim_{x\to \infty}\frac{f(x)}{g(x)}=1.$ In this article, all implied constants are allowed to depend on the embedding $X\hookrightarrow\BA^n_\BQ$ and the chosen euclidean norm $\|\cdot\|$.

The letter $p$ is always reserved for prime numbers, and $\varepsilon$ denotes an arbitrarily small positive parameter that can be rescaled by constant multiples. We write $p^k\| n$ for certain $k\in\BN,n\in\BN_{\neq 0}$ if $p^{k}\mid n$ and $p^{k+1}\nmid n$.
We denote the M\"{o}bius function by $\mu(\cdot)$. 
For every $l\in\BN_{\neq 0}$, we write $\BZ/l$ for the cokernel of the multiplication by $l$ morphism $\BZ\xrightarrow{\cdot l}\BZ$.
When stating a proposition, we sometimes call an integer $a\geqslant 2$ \emph{sufficiently divisible}, if there exists $a_0\in\BN_{\neq 0}$ such that $a_0\mid a$, and the dependency of the integer $a_0$ will be specified in each statement.

A \emph{variety} over $\BQ$ is an integral separated scheme of finite type over $\BQ$. 
For $X$ a $\BQ$-variety, an \emph{integral model} of $X$ over $\BZ$ is a faithfully flat of finite type and separated scheme $\CX$ over $\BZ$ 
endowed with an isomorphism $\CX\times_{\BZ}\BQ\cong X$ over $\BQ$. Such an integral model over $\BZ$ always exists by \cite[Proposition 2.5]{Liu-Xu}.
For each $x\in \CX$, we write $k(x)$ for the residue field of $x$.
We write $\codim_\CX(\CY)$ for the codimension of a subscheme $\CY$ in $\CX$.
For $\CW$ a scheme over $\BZ$, we write $\CW(\widehat{\BZ}):=\prod_{p<\infty} \CW(\BZ_{p})$.

We frequently use the following version of the Lang-Weil estimate, which follows from the Weil conjecture \cite[Theorem 7.7.1]{Poonena} ignoring finitely many primes.
Let $\CY$ be a separated reduced scheme of finite type of dimension $>0$ over $\BZ$. Let $\CY_{\BQ}:=\CY\times_{\BZ}\BQ$ be the generic fibre, and for any $p$, let $\CY_{\BF_p}:=\CY\times_{\BZ}\BF_p$. 
Since $\dim(\CY_{\BF_p})= \dim(\CY_{\BQ})$ for almost all $p$,  we have, uniformly for any $p$, \begin{equation}\label{eq:Lang-WeilFp}
\# \CY(\BF_p)=O(p^{\dim(\CY_\BQ)}).
\end{equation} If there exists $L_0\in\BN_{\neq 0}$ such that the scheme $\CY$ is smooth over $\BZ[1/L_0]$ with geometrically integral fibres (e.g. when the generic fibre $\CY_{\BQ}$ is smooth and geometrically integral), then uniformly for any $p$, \begin{equation}\label{eq:Lang-WeilFpsmooth}
\#\CY(\BF_p)=p^{\dim(\CY_\BQ)}+O(p^{\dim(\CY_\BQ)-\frac{1}{2}}).
\end{equation} All implied constants above depend only on $\CY$. In particular, by Hensel's lemma, for almost all $p$, $\CY(\BZ_p)\neq\varnothing$. 
Moreover, if $\CY$ is quasi-affine, then for any square-free integer $l\geqslant 2$ with $(L_0,l)=1$, it follows from \eqref{eq:Lang-WeilFpsmooth} that
\begin{equation}\label{eq:Lang-WeilZmodl}
\#\CY(\BZ/l)=\prod_{p\mid l}\#\CY(\BF_p)=\prod_{p\mid l}\left(p^{\dim(\CY_\BQ)}\left(1+O\left(p^{-\frac{1}{2}}\right)\right)\right)\ll_\varepsilon l^{\dim(\CY_\BQ)+\varepsilon}.
\end{equation}

\section{Hardy-Littlewood property and equidistribution}\label{se:HLandequidist}

 The Hardy-Littlewood property signifies that the growth of integral points is quantified by the Tamagawa measure on the adelic space. In \S\ref{se:normalisedTamagawameasure}  we first recall the definition of normalised Tamagawa measures on adelic spaces and their restrictions on open subsets. In \S\ref{se:HLforintegralmodel} we prove Proposition \ref{prop:sec2integralHL}, which shows that to deduce the \APHL\ property for $X$, it suffices to prove \eqref{eq:relHL} for congruence neighbourhoods with respect to a specific subfamily of integral models of $X$. In $\S$\ref{se:equierrterm} we prove Proposition \ref{prop:HLintegralforsubset}, giving explicit error terms for \eqref{eq:relHL} evaluated at a large family of adelic neighbourhoods. 
In \S\ref{se:applicationofequidistribution} we exhibit two different applications of Proposition \ref{prop:HLintegralforsubset}.
\subsection{Normalised Tamagawa measures}\label{se:normalisedTamagawameasure}
We refer to \cite[\S1.6]{Borovoi-Rudnick} and \cite[\S2]{Weil} for details.

Let $X$ be a smooth geometrically integral variety over $\BQ$ such that $X(\RA)\neq\varnothing$ (which we assume throughout this article).
A \emph{gauge form} $\omega_X$ on $X$ is a nowhere zero differential form of degree $\dim(X)$.
For any place $v$ of $\BQ$, the gauge form $\omega_X$ induces a measure $\m_v^X$ on $X(\BQ_v)$ (\cite[\S 2.2]{Weil}). 
Let $\CX$ be an integral model of $X$ over $\BZ$. By the Lang-Weil estimate \eqref{eq:Lang-WeilFpsmooth}, we can choose an integer $l$ (depending on $\CX$) such that $\prod_{p\nmid l,p<\infty}\CX(\BZ_p)\neq \varnothing$. 
According to \cite[Theorem 2.2.5]{Weil}, we have, for almost all finite places $v=p<\infty$, 
\begin{equation}\label{eq:weilresult}
\m_p^X(\CX(\BZ_p)):= \int_{\CX(\BZ_p)}\operatorname{d}\m^X_p= \frac{\#\CX(\BF_p)}{p^{\dim X}}. \footnote{Although it will not be used in the sequel, the real part $\m_\infty^X$ is closely related to the real Hardy-Littlewood density \emph{à la Siegel}, at least when $X$ is an affine complete intersection. See \cite[0.0.4]{Borovoi-Rudnick}.}
\end{equation}

A set of \emph{convergence factors} $(\lambda_v)$ for $(\m_v^X)$ is a set of strictly positive real numbers indexed by the places of $\BQ$ such that the infinite product \begin{equation}\label{eq:infiniteprodCX}
\prod_{p\nmid l,p<\infty}\lambda_p^{-1} \m^X_p(\CX(\BZ_p))
\end{equation} is absolutely convergent. 
The \emph{Tamagawa measure} on the adelic space $X(\RA)$  corresponding to $(\omega_X,(\lambda_v))$ is defined as (\cite[\S 2.3]{Weil}) \begin{equation}\label{eq:Tamagawames}
	\m^X:=\prod_v \lambda_v^{-1} \m_v^X.
\end{equation}
This Tamagawa measure $\m^X$ is \emph{normalized} if \begin{equation}\label{eq:normalisedTamagawa}
	\lambda_{\infty}\times \lim_{x\to \infty}\prod_{p\leq x}\lambda_p=1.
\end{equation}
The existence of the limit in \eqref{eq:normalisedTamagawa} (and hence the existence of a normalized Tamagawa measure) is equivalent to the convergence of the infinite product $\prod_{p\nmid l,p<\infty}\m^X_p(\CX(\BZ_p))$, and is independent of the choice of integral models $\CX$ of $X$ (cf. \cite[\S 1.6]{Borovoi-Rudnick}). 
Denote by \begin{equation}\label{eq:Tamagawafinite}
\m^X_f:=\lambda^{-1}_{\infty}\prod_{p<\infty} \lambda_p^{-1} \m_p^X
\end{equation} the finite part measure on $X(\RA_f)$. If $\m^X$ is normalised with respect to a chosen set of convergence factors, then a different choice of convergent factors satisfying \eqref{eq:normalisedTamagawa} induces the same measure as $\m^X$ (resp. $\m^X_f$) on $X(\RA)$ (resp. $X(\RA_f)$).

For any dense open subset $U\subset X$, for every place $v$, consider $\m_v^U:=\m_v^X|_U$ the restricted $v$-adic Tamagawa measure on $U_v(\BQ_v)$. For $v=\infty$, the set $(X\setminus U)(\BR)\subset X(\BR)$ is closed of lower dimension. Let $\phi:\CV\to X(\BR)$ be a local diffeomorphism, where $\CV\subset\BR^{\dim X}$ is an open real neighbourhood. Then for any bounded real neighbourhood $B_{\BR}\subset X(\BR)$, 
the Lebesgue measure of $\phi^{-1}(B_{\BR}\cap (X\setminus U)(\BR))$ is zero. Therefore  $\m_{\infty}^X(B_{\BR}\cap (X\setminus U)(\BR))=0$ and
\begin{equation}\label{eq:measurerealplace}
\m_{\infty}^X(B_{\BR})=\m_{\infty}^U (B_{\BR}\cap U(\BR)).
\end{equation}
The next proposition shows that, the restriction of $\m^X$ to $U$ is well-defined, provided $\codim_X(X\setminus U)\geqslant 2$.
\begin{proposition}\label{prop:restrictionTamagawa}
	Suppose that $\codim_X(X\setminus U)\geqslant 2$. Then $(\lambda_v)$ is a set of convergence factors for $(\m_v^U)$, and the product measure $\m^U:=\prod_{v}\lambda_v^{-1}\m_v^U$ is a normalized Tamagawa measure on $U(\RA)$ corresponding to $(\omega_X|_U, (\lambda_v))$. 
\end{proposition}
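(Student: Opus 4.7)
The plan is to reduce both statements to a single absolute convergence, then invoke the Lang--Weil estimate \eqref{eq:LWcp} supplied in the body of the paper. Observe first that the normalization condition \eqref{eq:normalisedTamagawa} involves only the convergence factors $(\lambda_v)$, not the variety. Hence once I show that $(\lambda_v)$ is a set of convergence factors for $(\m_v^U)$, the resulting product measure $\m^U$ is automatically normalized. The core task is therefore to prove that $\prod_{p\nmid l',\,p<\infty}\lambda_p^{-1}\m_p^U(\CU(\BZ_p))$ converges absolutely for some (hence any) integral model $\CU$ of $U$ and suitable $l'$.

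To set this up, I pick an integral model $\CX$ of $X$ as in the statement, let $\CZ$ be the Zariski closure of $Z:=X\setminus U$ in $\CX$, and take $\CU:=\CX\setminus \CZ$ as an integral model of $U$. Since $\omega_X|_U$ is the chosen gauge form on $U$ and $U\subset X$ is open, one has $\m_v^U=\m_v^X|_{U(\BQ_v)}$ at every place $v$. Enlarging $l$ if necessary, for all primes $p\nmid l$ the schemes $\CX$, $\CU$ and $\CZ$ are smooth over $\BZ_p$ with geometrically integral generic fibres (using \eqref{eq:Lang-WeilFpsmooth}), and the reduction maps $\CX(\BZ_p)\to\CX(\BF_p)$ and $\CU(\BZ_p)\to\CU(\BF_p)$ are surjective with fibres of normalized measure $p^{-\dim X}$ by Hensel's lemma. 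This yields Weil's formula \eqref{eq:weilresult} for both $\CX$ and $\CU$ at almost all $p$, so that
\[
\frac{\m_p^U(\CU(\BZ_p))}{\m_p^X(\CX(\BZ_p))}\;=\;\frac{\#\CU(\BF_p)}{\#\CX(\BF_p)}\;=\;\tau_p(\CU,\CX).
\]

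The decisive input is the Lang--Weil bound \eqref{eq:LWcp}, which uses precisely the hypothesis $\codim_X(X\setminus U)\geqslant 2$, and gives $\tau_p(\CU,\CX)=1+O(p^{-\codim_X Z})=1+O(p^{-2})$, uniformly in $p$. In particular $\prod_{p\nmid l}\tau_p(\CU,\CX)$ converges absolutely. Writing
\[
\lambda_p^{-1}\m_p^U(\CU(\BZ_p))\;=\;\bigl(\lambda_p^{-1}\m_p^X(\CX(\BZ_p))\bigr)\cdot\tau_p(\CU,\CX),
\]
I factor the desired infinite product as a product of two absolutely convergent infinite products: the first is absolutely convergent by the assumption that $(\lambda_v)$ is a set of convergence factors for $(\m_v^X)$, and the second by the preceding estimate. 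This completes the plan, and the normalization claim then follows immediately as explained in the first paragraph.

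I expect the only mildly delicate point to be the identification $\m_p^U(\CU(\BZ_p))=\#\CU(\BF_p)/p^{\dim X}$ for almost all $p$, because Weil's formula \eqref{eq:weilresult} is stated for the integral model's own local measure and one has to check it remains valid for the restriction $\m_p^X|_{U(\BQ_p)}$; this is handled by noting that, for $p$ of good reduction, the fibres of $\CX(\BZ_p)\to\CX(\BF_p)$ meeting $\CU(\BF_p)$ are exactly the fibres of $\CU(\BZ_p)\to\CU(\BF_p)$, so the two local measures on $\CU(\BZ_p)$ coincide. No further obstacle is anticipated.
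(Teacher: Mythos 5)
Your proof is correct and follows essentially the same route as the paper's: reduce everything to the absolute convergence of $\prod_p \lambda_p^{-1}\m_p^U(\CU(\BZ_p))$, compare term-by-term with the corresponding product for $\CX$ via Weil's formula \eqref{eq:weilresult} and the Lang--Weil estimate, and conclude. You are slightly more explicit on two points the paper leaves tacit — that the normalization condition \eqref{eq:normalisedTamagawa} concerns only $(\lambda_v)$ and is therefore inherited for free, and the justification that Weil's formula applies to the restricted measure $\m_p^X|_U$ on $\CU(\BZ_p)$ — but these are presentational; the underlying argument is the same.
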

\begin{proof}
	Let us fix $\CX$ an integral model of $X$ over $\BZ$. Let $\CZ$ be the Zariski closure of $Z:=X\setminus U$ in $\CX$ and let $\CU:=\CX\setminus\CZ$ be the integral model of $U$. We fix $l^\prime\in\BN_{\neq 0}$ an integer such that $\CX,\CU$ are smooth over $\BZ[1/l^\prime]$ and $\prod_{p\nmid l^\prime,p<\infty}\CU(\BZ_p)\neq\varnothing$. 
	Take $M_0$ sufficiently large such that, for any $p\geqslant M_0$,  we have $p\nmid l^\prime$, and by (\ref{eq:weilresult}),
\begin{equation}\label{eq:mpXmpU}
		\m_p^X(\CX(\BZ_p))= \#\CX(\BF_p)p^{-\dim X},\quad \m_p^X(\CU(\BZ_p))=\m_p^U(\CU(\BZ_p))= \#\CU(\BF_p)p^{-\dim U}.
\end{equation}
	 The Lang-Weil estimates \eqref{eq:Lang-WeilFp} \eqref{eq:Lang-WeilFpsmooth} show that
	$$\frac{\#\CZ(\BF_p)}{\# \CU(\BF_p)}=O\left(\frac{1}{p^{\codim_{X}(Z)}}\right)$$ uniformly for all $p$. Therefore for any $p\geqslant M_0$,
	 \begin{equation}\label{eq:Lang-WeilCXCZ}
	 \frac{\lambda_{p}^{-1}\m_p^X(\CX(\BZ_p))}{\lambda_{p}^{-1}\m_p^X(\CU(\BZ_p))} 
	 =\frac{\#\CX(\BF_p)}{\# \CU(\BF_p)}=1+\frac{\#\CZ(\BF_p)}{\# \CU(\BF_p)}=1+O\left(\frac{1}{p^{\codim_{X}(Z)}}\right).
	 \end{equation}
	 This shows that the infinite product $$\prod_{p\nmid l^\prime,p<\infty}\frac{\lambda_{p}^{-1}\m_p^X(\CX(\BZ_p))}{\lambda_{p}^{-1}\m_p^X(\CU(\BZ_p))} $$ is absolutely convergent. Since by the construction of $(\lambda_v)$, \eqref{eq:infiniteprodCX} is absolutely convergent, we conclude that the infinite product $$\prod_{p\nmid l^\prime,p<\infty} \lambda_p^{-1}\m_p^U(\CU(\BZ_p))$$ is also absolutely convergent, which was to be shown.
\end{proof}
Normalized Tamagawa measures exist on many homogeneous spaces.
For instance, let $G$ be a semisimple and simply connected algebraic group, $H\sbt G$ be a connected reductive closed subgroup, and assume that $X\cong G/H$. Then by \cite[p. 24 Corollary]{Weil}, there exists a $G$-invariant gauge form on $X$.
In this case, we can construct a canonical Tamagawa measure $\m^X$  on $X(\RA)$ corresponding to a canonical choice of convergent factors $(\lambda_v)$ such that $\m^X$ is normalised (cf. \cite[\S 1.6.2]{Borovoi-Rudnick}).
If $H$ is semisimple, we choose $\lambda_v=1$ for all $v$, and in general, $\lambda_v$ are defined in terms of some Artin $L$-functions.

Throughout the rest of this section, We shall be working with varieties $X$ satisfying the following.
\begin{hypothesis}\label{hyp:good}
	The variety $X\subset \BA^n_{\BQ}$ is quasi-affine, smooth and geometrically integral, such that $X(\RA)\neq\varnothing$, that $X(\BR)$ has no compact connected components, and that $X$ is equipped with a normalized Tamagawa measure $\m^X=\m^X_{\infty}\times \m^X_f$ on $X(\RA)$.
\end{hypothesis}
\subsection{Hardy-Littlewood property for congruence neighbourhoods of integral models}\label{se:HLforintegralmodel}
Let $X$ be a variety satisfying Hypothesis \ref{hyp:good}.
Let $\CX\subset \BA^n_{\BZ}$ be an integral model of $X$ over $\BZ$.
 Let  $l\geqslant 2$ be an integer and $\xi:=(\xi_p)_{p|l}\in \prod_{p|l}X(\BQ_p)\subset\prod_{p\mid l}\BQ_p^n$ be a collection of local points. For each $p\mid l$, we define the $p$-adic neighbourhood $$B_p^{\CX}(\xi,l):=X(\BQ_p)\cap (\xi_p+p^{v_p(l)}\BZ^n_p)\subset \BQ^n_p.$$
We then define the finite adelic subset \begin{equation}\label{eq:BfX}
	B_f^{\CX}(\xi,l):=\prod_{p|l}B_p^{\CX}(\xi,l)\times \prod_{p\nmid l}\CX(\BZ_p)\subset X(\RA_f).
\end{equation}
\begin{definition}\label{def:congneighbourhood}
	We say that $B_f^{\CX}(\xi,l)$ is a compact congruence finite adelic neighbourhood of $\CX$, or a \emph{congruence neighbourhood} of $\CX$ for short, if it is non-empty and compact (by convention).
\end{definition}
Our goal is to show that (cf. Proposition \ref{prop:sec2integralHL} below),  the validity of formula \eqref{eq:relHL} for all such congruence neighbourhoods $B_f^{\CX}(\xi,l)$ ensures the Hardy-Littlewood property for $X$.

Note that $B_f^{\CX}(\xi,l)$ is non-empty if and only if $\prod_{p\nmid l}\CX(\BZ_p)\neq\varnothing$, and $B_f^{\CX}(\xi,l)$ is compact if and only if $B_p^{\CX}(\xi,l)$ is closed in $\xi_p+p^{v_p(l)}\BZ^n_p\subset\BA_{\BZ_{p}}^n$ (hence compact) for each $p|l$  because $\CX(\BZ_p)$ is compact for any $p$ (cf. the proof of \cite[Corollary 3.7]{Conrad}).
Any non-empty finite adelic subset $B_f^{\CX}(\xi,l)$ of $\CX$ of the form \eqref{eq:BfX} becomes a congruence neighbourhood upon replacing $l$ by some of its sufficiently large powers depending on $\CX$ and every $\xi_p$, and leaving $\xi$ unchanged.
Hence the family $\{B_f^{\CX}(\xi,l)\}$ consisting of all congruence neighbourhoods of $\CX$ forms a topological base of $X(\RA_f)$.

In many cases, congruence neighbourhoods can equivalently be defined via congruence conditions. For instance, when both $X,\CX$ are affine, i.e., both $X\subset\BA^n_\BQ$, $\CX\subset \BA^n_{\BZ}$ are closed, then for any $l\geqslant 2,\overline{\xi}\in \CX(\BZ/l)$, suppose that there exists $\xi$ a lift of $\overline{\xi}$ in $\prod_{p|l}\CX(\BZ_p)\subset \prod_{p|l}X(\BQ_p)$, we have  \begin{equation}\label{eq:congball}
	B_p^\CX(\xi,l)=\{\XX\in \CX(\BZ_p):\ \XX\equiv \overline{\xi} (\mod p^{v_p(l)})\}.
\end{equation} 
Each $B_p^\CX(\xi,l)$ is closed. Hence, if $B_f^{\CX}(\xi,l)$ is non-empty, it is a congruence neighbourhood. We will make use of this fact in \S\ref{se:homogeneousspaces}. 

Any $a\in \BN_{\neq 0}$ defines a morphism \begin{equation}\label{eq:phia}
	\begin{split}
		\phi_a: \BA^n_{\BZ}&\to \BA^n_{\BZ}\\ \xx&\mapsto a\cdot \xx,
	\end{split}
\end{equation} and $\phi_{a,\BQ}:\BA^n_\BQ\to\BA^n_\BQ$ is an isomorphism.
Let \begin{equation}\label{eq:Xa}
	X_a:=\phi_{a,\BQ}(X)\subset \BA^n_{\BQ}.
\end{equation} Then $X_a$ is isomorphic to $X$, and is equipped with the normalised Tamagawa measure $\m^{X_a}:=(\phi_{a,\BQ})_*\m^X$. 
Now we define an integral model $\CX_a$ of $X_a$.
Let $\overline{X}\subset \BA^n_{\BQ}$, $\overline{\CX}\subset \BA^n_{\BZ}$ be their corresponding Zariski closures, 
and assume that $\overline{\CX}$ is defined by a family of polynomials $f_1,\cdots,f_r\in \BZ[x_1,\cdots,x_n].$
 Let $\overline{\CX_a}\subset \BA^n_{\BZ}$ be the affine scheme defined by the polynomials $$a^{\deg f_1}f_1\left(\frac{1}{a}x_1,\cdots,\frac{1}{a}x_n\right),\cdots,a^{\deg f_r}f_r\left(\frac{1}{a}x_1,\cdots,\frac{1}{a}x_n\right)\in\BZ[x_1,\cdots,x_n],$$
 and let \begin{equation}\label{eq:CXa}
 	\CX_a:=\overline{\CX_a}\setminus \overline {\phi_{a,\BQ}(\overline{X}\setminus X)} \subset \BA^n_{\BZ}. \footnote{The inner bar denotes the Zariski closure in $\BA^n_\BQ$, and the outer bar denotes the closure in $\BA^n_{\BZ}$.}
 \end{equation}
Then $\CX_a$ is an integral model of $X_a$ and, in fact, $\overline{\CX_a}$ is the Zariski closure of $X_a$ in $ \BA^n_{\BZ}$. If $\CX=\overline{\CX}\setminus \overline{(\overline{X}\setminus X)}$ (e.g. when both $X,\CX$ are affine), then $\phi_a$ induces an isomorphism $ \CX\times_{\BZ}\BZ[1/a] \cong \CX_a\times_{\BZ}\BZ[1/a]$.

Recall as in the introduction that we equip the affine space $\BA^n(\BR)=\BR^n$ with an euclidean norm $\|\cdot\|$ in order to define the counting function \eqref{eq:NXT}. 
\begin{proposition}\label{prop:sec2integralHL}
Assume that $X$ satisfies Hypothesis \ref{hyp:good}, and let $\delta_{X}:X(\RA)\to\BR_{\geqslant 0}$ be a locally constant, not identically zero function. Then the following properties are equivalent.

(i) The variety $X$ is Hardy-Littlewood with density $\delta_{X}$;

(ii) For any quasi-affine integral model $\CX\subset \BA^n_{\BZ}$ of $X$, for any connected component $B_\infty\subset X(\BR)$ and for any congruence neighbourhood $ B_f^{\CX}(\xi,l)$ of $\CX$, we have \begin{equation}\label{eq:sec2formulaintegralHL}
	N_{X}(B_{\infty}\times B_f^{\CX}(\xi,l);T) \sim \int_{B_\infty(T)\times B_f^{\CX}(\xi,l)}\delta_X \operatorname{d}\m^X, \quad T\to\infty;
\end{equation}

(iii) There exists a quasi-affine integral model $\CX\subset \BA^n_{\BZ}$ of $X$ such that, for any connected component $B_\infty\subset X(\BR)$ and for any congruence neighbourhood $ B_f^{\CX}(\xi,l)$ of $\CX$, the formula (\ref{eq:sec2formulaintegralHL}) holds ;

(iv) There exists a quasi-affine integral model $\CX\subset \BA^n_{\BZ}$ of $X$ such that, for any connected component $B_\infty\subset X(\BR)$, and for any sufficiently divisible $a\in \BN_{\neq 0}$ (depending on $\CX$) and any congruence neighbourhood $ B_f^{\CX_a}(\xi,l)$ of $\CX_a$ with $B_f^{\CX_a}(\xi,l)\subset \overline{\CX_a}(\widehat{\BZ})$, we have \begin{equation}\label{eq:sec2formulaintegralHLtwista}
	N_{X_a}(\phi_{a,\BR}(B_{\infty})\times B_f^{\CX_a}(\xi,l);T)\sim  \int_{\phi_{a,\BR}(B_\infty(T))\times B_f^{\CX_a}(\xi,l)}\delta_{X_a} \operatorname{d}\m^{X_a},
\end{equation} where $\delta_{X_a}$ is the locally constant function $\delta_{X}\circ \phi_a^{-1}:X_a(\RA)\to \BR_{\geqslant 0}$.
\end{proposition}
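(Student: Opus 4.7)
I plan to prove the cycle (i) $\Rightarrow$ (ii) $\Rightarrow$ (iii) $\Rightarrow$ (iv) $\Rightarrow$ (i). The implications (i) $\Rightarrow$ (ii) and (ii) $\Rightarrow$ (iii) are essentially tautological: every congruence neighbourhood $B_f^\CX(\xi,l)$ is a compact open subset of $X(\RA_f)$ (each $p$-adic factor $\xi_p + p^{v_p(l)}\BZ_p^n$ is clopen in $\BQ_p^n$, and $\CX(\BZ_p)$ is compact open in $X(\BQ_p)$ for almost all $p$), so \eqref{eq:sec2formulaintegralHL} is a special case of \eqref{eq:relHL}; and a quasi-affine integral model exists by the reference cited in $\S$\ref{se:notation}.

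For (iii) $\Rightarrow$ (iv), fix $\CX$ satisfying (iii). The isomorphism $\phi_a\colon X\iso X_a$ yields the transfer identities $(\phi_a)_*\m^X = \m^{X_a}$, $\phi_{a,\BR}(B_\infty(T))=\phi_{a,\BR}(B_\infty)(aT)$, and
\[
N_X(B_\infty\times B_f;T) = N_{X_a}\bigl(\phi_{a,\BR}(B_\infty)\times \phi_a(B_f);\,aT\bigr),
\]
the last because $\|\phi_a(\XX)\|=a\|\XX\|$. Hence establishing \eqref{eq:sec2formulaintegralHLtwista} for a congruence neighbourhood $B_f^{\CX_a}(\xi,l)\subset\overline{\CX_a}(\widehat{\BZ})$ reduces to establishing \eqref{eq:sec2formulaintegralHL} with $B_f := \phi_a^{-1}(B_f^{\CX_a}(\xi,l))$, a compact open subset of $X(\RA_f)$. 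Using the topological-base property of congruence neighbourhoods of $\CX$ (noted explicitly in $\S$\ref{se:HLforintegralmodel}) together with compactness, this $B_f$ decomposes into a finite disjoint union of congruence neighbourhoods of $\CX$, and summing \eqref{eq:sec2formulaintegralHL} across the decomposition gives (iv).

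For (iv) $\Rightarrow$ (i), let $B_\infty\times B_f$ be an arbitrary adelic neighbourhood. Compactness of $B_f$ in the restricted product topology places it inside $\prod_{p\in S}X(\BQ_p)\times \prod_{p\notin S}\CX(\BZ_p)$ for some finite set of primes $S$, and its projections to $X(\BQ_p)$ for $p\in S$ have bounded $p$-adic denominators. I choose $a$ divisible by sufficiently large powers of the primes in $S$ and satisfying whatever divisibility constraint is required by (iv), so that $\phi_a(B_f)\subset \overline{\CX_a}(\widehat{\BZ})$. The compact open $\phi_a(B_f)$ again decomposes, by the topological-base argument, into a finite disjoint union of congruence neighbourhoods $B_f^{\CX_a}(\xi_i,l_i)\subset \overline{\CX_a}(\widehat{\BZ})$; applying \eqref{eq:sec2formulaintegralHLtwista} to each summand and transferring back through the identities above produces \eqref{eq:relHL} for $B_\infty\times B_f$.

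The main technical step, common to both non-trivial implications, is the decomposition of a compact open subset of $X(\RA_f)$ (or of $\overline{\CX_a}(\widehat{\BZ})$) as a finite disjoint union of congruence neighbourhoods of the relevant integral model. Since these neighbourhoods form a topological base, any compact open subset admits a finite cover by them, which is then refined to a finite disjoint union by the standard Boolean algebra construction on clopen subsets. The key point making this refinement work is that a common refinement of two congruence neighbourhoods of $\CX$ (obtained by taking a common multiple of the levels $l$ and subdividing the $p$-adic balls accordingly) is again a finite disjoint union of congruence neighbourhoods.
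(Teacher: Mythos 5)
Your proposal is correct in substance and rests on the same two pillars as the paper's own argument: the decomposition of a compact open finite-adelic set into a finite disjoint union of congruence neighbourhoods at a common sufficiently divisible level, and the transfer identities via $\phi_a$ (namely $(\phi_a)_*\m^X=\m^{X_a}$, $\|\phi_a(\XX)\|=a\|\XX\|$, and $\phi_{a,\BR}(B_\infty(T))=\phi_{a,\BR}(B_\infty)(aT)$). The two proofs differ in the chosen cycle of implications: you prove the cycle (i)$\Rightarrow$(ii)$\Rightarrow$(iii)$\Rightarrow$(iv)$\Rightarrow$(i), whereas the paper proves (i)$\Rightarrow$(ii), notes (ii)$\Rightarrow$(iii) and (ii)$\Rightarrow$(iv) as evident, and then closes the cycle with (iii)$\Rightarrow$(i) and (iv)$\Rightarrow$(iii). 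The practical consequence is that your route runs the ``decompose and transfer'' machine twice (once in (iii)$\Rightarrow$(iv), once in (iv)$\Rightarrow$(i)), while the paper's (iv)$\Rightarrow$(iii) is lighter: given a congruence neighbourhood $B_f^{\CX}(\xi,l)$ of $\CX$, one directly produces a multiple $a_l$ with $l\mid a_l$, $a_0\mid a_l$ and $\phi_{a_l}(\xi)\in\prod_{p\mid l}\overline{\CX_{a_l}}(\BZ_p)$, so that $\phi_{a_l}$ maps $B_f^{\CX}(\xi,l)$ \emph{onto a single} congruence neighbourhood $B_f^{\CX_{a_l}}(\phi_{a_l}(\xi),a_ll)\subset\overline{\CX_{a_l}}(\widehat{\BZ})$, with no decomposition needed at that step; all the decomposition work is then concentrated in (iii)$\Rightarrow$(i). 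Either organization is logically valid. Two small points worth spelling out if you expand this into a full proof: first, the refinement to a common level $l_0$ must take $l_0$ sufficiently divisible so that the congruence balls $B_p^{\CX}(\xi,l_0)$ are closed (hence compact), which is exactly the caveat the paper flags for quasi-affine $\CX$; and second, when you pick $a$ so that $\phi_a(B_f)\subset\overline{\CX_a}(\widehat{\BZ})$ in (iv)$\Rightarrow$(i), you should (as the paper does at the start of its (iv)$\Rightarrow$(iii)) reduce to the case $\CX=\overline{\CX}\setminus\overline{(\overline{X}\setminus X)}$ so that $\phi_a$ induces an isomorphism $\CX\times_{\BZ}\BZ[1/a]\cong\CX_a\times_{\BZ}\BZ[1/a]$ and the identification of $\CX(\BZ_p)$ with $\CX_a(\BZ_p)$ for $p\nmid a$ is clean.
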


\begin{proof}
The implications ``(ii)$\Rightarrow$(iii)'' and ``(ii)$\Rightarrow$(iv)'' are evident, and ``(i)$\Rightarrow$(ii)'' just follows from definition. 

For ``(iii)$\Rightarrow$(i)'', we need to show: for any connected component $B_\infty\subset X(\BR)$ and any compact open subset $B_f\subset X(\RA_f)$, the formula (\ref{eq:relHL}) holds.
Let $\CX$ be as in the assumption satisfying \eqref{eq:sec2formulaintegralHL}. Since $B_f$ is compact, there exist finitely many congruence neighbourhoods $ B_f^{\CX}(\xi_i,l_i),1\leqslant i\leqslant r$ of $\CX$ such that $B_f=\cup_{i=1}^r B_f^{\CX}(\xi_i,l_i)$.
We can choose $l_0\in \BN_{\neq 0}$ (depending on $\CX$ and $B_f$) with $\prod_{i=1}^r l_i\mid l_0$ such that each $ B_f^{\CX}(\xi_i,l_i)$ is covered by finitely many congruence neighbourhoods $B_f^{\CX}(\xi_j,l_0)$ of $\CX$. Since any two such neighbourhoods $ B_f^{\CX}(\xi_j,l_0)$ are either equal or disjoint, therefore we conclude that there exist finitely many congruence neighbourhoods $B_f^{\CX}(\xi_k,l_0),1\leqslant k\leqslant r^\prime$ such that
$B_f=\coprod_{k=1}^{r^\prime} B_f^{\CX}(\xi_k,l_0)$.
Hence, assuming that \eqref{eq:sec2formulaintegralHL} holds for all adelic neighbourhoods $B_\infty\times B_f^{\CX}(\xi_k,l_0),1\leqslant k\leqslant r^\prime$, then as $T\to\infty$, 
\begin{align*}
	N_X(B_\infty\times B_f;T)&=\sum_{k=1}^{r^\prime} N_X(B_\infty\times B_f^{\CX}(\xi_k,l_0);T)\\ &\sim \sum_{k=1}^{r^\prime}\int_{B_\infty(T)\times B_f^{\CX}(\xi_k,l_0)}\delta_X \operatorname{d}\m^X\\ &= \int_{B_\infty(T)\times B_f}\delta_X \operatorname{d}\m^X.
\end{align*}
This proves \eqref{eq:relHL}, whence $X$ is Hardy-Littlewood with density $\delta_{X}$.

Now we prove ``(iv)$\Rightarrow$(iii)''. Let $\CX$ be as in the assumption.
We may assume $\CX=\overline{\CX}\setminus \overline{(\overline{X}\setminus X)}$, which does not affect the definition of $\CX_a$ \eqref{eq:CXa}. Let $a_0\in\BN_{\neq 0}$ (depending on $\CX$) be such that the assumption of (iv) holds for every $\CX_a$ with $a_0\mid a$.
Now for any congruence neighbourhood $ B_f^{\CX}(\xi,l)$ of $\CX$,
we can find $a_l\in \BN_{\neq 0}$ (depending on $B_f^{\CX}(\xi,l)$) with $l\mid a_l$ and $a_0\mid a_l$ such that $\phi_{a_l}(\xi)\in \prod_{p|l}\overline{\CX_{a_l}}(\BZ_p)$. Note in particular $\prod_{p\nmid l}\overline{\CX_{a_l}}(\BZ_p)\neq\varnothing$ by the definition of $ B_f^{\CX}(\xi,l)$. Then it is clear that $\overline{\CX_{a_l}}(\widehat{\BZ})\neq\varnothing$, $B_f^{\CX_{a_l}}(\phi_{a_l}(\xi),a_ll)\subset\overline{\CX_{a_l}}(\widehat{\BZ})$ is a congruence neighbourhood of $\CX_{a_l}$, and $\phi_{a_l}$ induces an isomorphism $B_\infty\times B^{\CX}_f(\xi,l)\cong \phi_{a_l,\BR}(B_{\infty})\times B^{\CX_{a_l}}_f(\phi_{a_l}(\xi),a_ll)$. Therefore since $a_0\mid a_l$, applying \eqref{eq:sec2formulaintegralHLtwista} to $B^{\CX_{a_l}}_f(\phi_{a_l}(\xi),a_ll)$, as $T\to\infty$, we have
\begin{align*}
	N_{X}(B_{\infty}\times B_f^{\CX}(\xi,l);T)&=N_{X_{a_l}}(\phi_{a_l,\BR}(B_{\infty})\times B_f^{\CX_{a_l}}(\phi_{a_l}(\xi),a_ll);a_lT)\\ &\sim \int_{(\phi_{a_l,\BR}(B_{\infty}))(a_lT)\times B_f^{\CX_{a_l}}(\phi_{a_l}(\xi),a_ll)}\delta_{X_{a_l}} \operatorname{d}\m^{X_{a_l}}\\ &=\int_{B_\infty(T)\times B_f^{\CX}(\xi,l)}\delta_X \operatorname{d}\m^X.
\end{align*}
This proves \eqref{eq:sec2formulaintegralHL} for the integral model $\CX$.
\end{proof}

We end this subsection by the following lemma which will be helpful later.
\begin{lemma}\label{le:factorisationdelta}
	Assume that $X$ satisfies Hypothesis \ref{hyp:good}. Let $\delta_{X}:X(\RA)\to\BR_{\geqslant 0}$ be a locally constant function. Then we have:
	 \begin{enumerate}[label=(\roman*)]
	 	\item For any connected component $B_{\infty}\subset X(\BR)$, $\delta_X|_{B_{\infty}\times X(\RA_f)}$ factors through a locally constant function $\delta_{B_{\infty}}:X(\RA_f)\to \BR_{\geqslant 0}$. In particular, for any compact open $B_f\subset X(\RA_f)$, we have \begin{equation}\label{eq:deltaBinfty}
	 		\int_{B_\infty(T)\times B_f}\delta_X \operatorname{d}\m^X=\m_{\infty}^X(B_{\infty}(T))\int_{B_f}\delta_{B_\infty} \operatorname{d}\m_f^X.
	 	\end{equation}
	 	\item Keeping the notation in (i). For any $l_0\geqslant 2$ and any compact open subset $B_0\sbt \prod_{p\mid l_0}X(\BQ_p)$, and for any quasi-affine integral model $\CX$ of $X$, on writing \begin{equation}\label{eq:Bf0X}
	 		B_{f,0}^\CX:=B_0\times \prod_{p\nmid l_0}\CX(\BZ_p)\subset X(\RA_f),
	 	\end{equation} there exists $l_1\in \BN_{\neq 0}$ depending only on $B_\infty,B_0$ and $\CX$ with $l_0\mid l_1$ such that, $\delta_{B_{\infty}}|_{B_{f,0}^\CX}$ factors through the projection $B_{f,0}^\CX\to B_0\times \prod_{p\mid l_1, p\nmid l_0}\CX(\BZ_p)$.  
	 \end{enumerate}
\end{lemma}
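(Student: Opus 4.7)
For part (i), my plan is to exploit the fact that $B_\infty \subset X(\BR)$ is by definition connected, whereas $\delta_X$ is locally constant on the product space $X(\RA) = X(\BR) \times X(\RA_f)$. For every fixed $\xi_f \in X(\RA_f)$, the restriction $x_\infty \mapsto \delta_X(x_\infty, \xi_f)$ is a locally constant function on $X(\BR)$ and is therefore constant on the connected component $B_\infty$; I define $\delta_{B_\infty}(\xi_f)$ to be this common value. Local constancy of $\delta_{B_\infty}$ is inherited from that of $\delta_X$: any open neighborhood of $(x_\infty, \xi_f)$ in $X(\RA)$ on which $\delta_X$ is constant contains a product $V_\infty \times V_f$ of opens, so $\delta_{B_\infty}$ is constant on the open set $V_f \ni \xi_f$. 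The identity \eqref{eq:deltaBinfty} is then immediate from Fubini: for $(x_\infty, x_f) \in B_\infty(T) \times B_f$ we have $\delta_X(x_\infty, x_f) = \delta_{B_\infty}(x_f)$ independently of $x_\infty$, and the inner integral over $B_\infty(T)$ equals $\m_\infty^X(B_\infty(T))$.

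For part (ii), my plan is a compactness argument in the restricted product topology on $X(\RA_f)$. The set $B_{f,0}^\CX$ is compact, being a product of the compact $B_0$ with the $p$-adically compact spaces $\CX(\BZ_p)$ (cf. the remark following Definition \ref{def:congneighbourhood}). By part (i), $\delta_{B_\infty}$ is locally constant on $X(\RA_f)$; restricted to the compact set $B_{f,0}^\CX$, it therefore assumes finitely many values, each level set being open. For every point $\xi_f = (\xi_p)_p \in B_{f,0}^\CX$, the adelic topology furnishes an open neighborhood of the form $\prod_{p \in S_{\xi_f}} V_p \times \prod_{p \notin S_{\xi_f}} \CX(\BZ_p)$ contained in a level set of $\delta_{B_\infty}$, where $S_{\xi_f}$ is a finite set of primes (which we may enlarge to contain $\{p : p \mid l_0\}$) and each $V_p \subset X(\BQ_p)$ is open. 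A finite subcover of $B_{f,0}^\CX$ exists by compactness; letting $S$ denote the finite union of the associated prime sets, the restriction $\delta_{B_\infty}|_{B_{f,0}^\CX}$ depends only on coordinates at primes in $S$. Taking $l_1 := l_0 \prod_{p \in S, \, p \nmid l_0} p$ yields a multiple of $l_0$ whose set of prime divisors is exactly $S \cup \{p : p \mid l_0\}$, and $\delta_{B_\infty}|_{B_{f,0}^\CX}$ then factors through the projection $B_{f,0}^\CX \to B_0 \times \prod_{p\mid l_1, \, p\nmid l_0} \CX(\BZ_p)$ as required; the dependence of $l_1$ is manifestly only on $B_\infty$, $B_0$, and $\CX$.

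The only technical point is to identify correctly the basic open sets in the restricted-product topology on $X(\RA_f)$ and to apply compactness properly; the argument uses no arithmetic input, and I do not anticipate any serious obstacle beyond careful book-keeping with the adelic topology.
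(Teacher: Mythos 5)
Your proof is correct and follows essentially the same lines as the paper's. Part (i) is exactly what the paper dismisses as ``clear''. For (ii), the paper also argues by compactness, but phrases the cover in terms of congruence neighbourhoods $B_f^\CX(\xi_k,l_1)$ with a single sufficiently divisible $l_1$ (invoking the compactness argument from the proof of Proposition~\ref{prop:sec2integralHL}); you instead cover $B_{f,0}^\CX$ by generic basic opens of the restricted-product topology and take the union $S$ of their supports, which is the same argument in different notation. The one step you assert without verification is that, once you have a finite subcover, $\delta_{B_\infty}|_{B_{f,0}^\CX}$ depends only on the coordinates at primes in $S$. To close that gap: if $\xi_f,\xi_f'\in B_{f,0}^\CX$ agree at all $p\in S$, and $\xi_f$ lies in some cover member $U_i=\prod_{p\in S_i}V_{i,p}\times\prod_{p\notin S_i}\CX(\BZ_p)$, then $\xi_f'\in U_i$ as well --- at primes $p\in S$ because $\xi'_p=\xi_p$ lies in the $p$-factor of $U_i$, and at primes $p\notin S$ because $S\supseteq\{p:p\mid l_0\}$ forces $p\nmid l_0$, hence $\xi'_p\in\CX(\BZ_p)$, which is the $p$-factor of $U_i$ --- so $\delta_{B_\infty}$ takes the same value on $\xi_f$ and $\xi_f'$. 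This is only book-keeping, as you anticipated, but it is worth writing down.
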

\begin{proof}
	(i) is clear because $\delta_{X}$ is locally constant and $B_{\infty}$ is connected.
	
	We now prove (ii). By the same arguments for the proof of ``(iii)$\Rightarrow$(i)'' of Proposition \ref{prop:sec2integralHL}, 
	there exists an $l_1\in \BN_{\neq 0}$ depending only on $B_\infty$ and $\CX$ with $l_0\mid l_1$ such that, the compact set $B_0\times \prod_{p\nmid l_0}\CX(\BZ_p)$ is a finite union of open subsets of the form $B_f^{\CX}(\xi_k,l_1)$, and the locally constant function $\delta_{B_\infty}$ obtained in (i) is constant on each $B_f^{\CX}(\xi_k,l_1)$.
	Therefore $\delta_{B_{\infty}}|_{B_{f,0}^\CX}$ factors through $ B_0\times \prod_{p\mid l_1, p\nmid l_0}\CX(\BZ_p)$, whence the statement of (ii).
\end{proof}
\subsection{Effective equidistribution with error terms}\label{se:equierrterm}
Let $X$ be a smooth geometrically integral quasi-affine variety over $\BQ$ with a fixed integral model $\CX$ over $\BZ$. Fix $l_0\in \BN_{\neq 0}$ an integer (depending on $\CX$) such that $\CX\times_{\BZ}\BZ[1/l_0]$ is smooth with geometrically integral fibres over $\BZ[1/l_0]$, and that $\prod_{p\nmid l_0}\CX(\BZ_p)\neq\varnothing$. For any integer $l\geqslant 2$ with $\gcd(l,l_0)=1$, the residue map
\begin{equation}\label{eq:defPsi}
	\Psi_l:=\prod_{p\mid l}\Psi_p: \prod_{p\mid l}\CX(\BZ_p) \to \prod_{p\mid l} \CX(\BZ/p^{v_p(l)}) \cong \CX(\BZ/l)
\end{equation}
is surjective by Hensel's Lemma.
Therefore, for any subset $S\subset \CX(\BZ/l)$, the set $\Psi_l^{-1}(S)\subset \prod_{p\mid l}\CX(\BZ_p)$ is compact. 

In the following proposition, we shall consider \eqref{eq:uniformlyHLintegral} as an effective version of the asymptotic formula (\ref{eq:sec2formulaintegralHL}) with an explicit uniform error term.
\begin{proposition}\label{prop:HLintegralforsubset}
Assume that $X$ satisfies Hypothesis \ref{hyp:good}. Let $\delta_{X}:X(\RA)\to\BR_{\geqslant 0}$ be a locally constant, not identically zero function. Assume that for $B_0\sbt \prod_{p\mid l_0}X(\BQ_p)$ a fixed compact open subset and $B_\infty\subset X(\BR)$ a fixed connected component, 
there exist $\sigma_\CX\geqslant 0$ and $0<\beta_\CX<1$ such that for any congruence neighbourhood $B_f^{\CX}(\xi,l)\subset B_{f,0}^\CX$ (cf. \eqref{eq:Bf0X}), we have
\begin{equation}\label{eq:uniformlyHLintegral}
	N_{X}(B_{\infty}\times B_f^{\CX}(\xi,l);T) = \int_{B_\infty(T)\times B_f^{\CX}(\xi,l)}\delta_X \operatorname{d}\m^X+O(l^{\sigma_\CX}\m_{\infty}^X(B_\infty(T))^{1-\beta_\CX}),
\end{equation}
where the implied constant of (\ref{eq:uniformlyHLintegral}) may depend on $B_{\infty},\CX,B_0$ but it is independent of $l$ and $\xi$. 
 Then for any integer $l\geqslant 2$ with $\gcd(l,l_0)=1$ and any subset $S\subset \CX(\BZ/l)$, we have
\begin{equation}\label{eq:uniformlyHLintegral2}
	N_{X}(B_{\infty}\times B_{f,0}^{\CX}(S,l);T) = \int_{B_\infty(T)\times B_{f,0}^{\CX}(S,l)}\delta_X \operatorname{d}\m^X+O(\#S \cdot l^{\sigma_\CX}\cdot\m_{\infty}^X(B_\infty(T))^{1-\beta_\CX})
\end{equation}
where we write \begin{equation}\label{eq:Bf0}
	B_{f,0}^{\CX}(S,l):=B_0\times \Psi_l^{-1}(S)\times \prod_{p\nmid l_0l} \CX(\BZ_p)\subset X(\RA_f),
\end{equation} and the implied constant may depend on $B_{\infty},\CX,B_0$, but it is independent of $l$ and $S$.
\end{proposition}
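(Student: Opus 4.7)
The plan is to reduce \eqref{eq:uniformlyHLintegral2} to \eqref{eq:uniformlyHLintegral} by decomposing $B_{f,0}^{\CX}(S,l)$ into a disjoint union of $O(\#S)$ congruence neighbourhoods sitting inside $B_{f,0}^{\CX}$, and then adding up the error terms coming from the hypothesis. The coprimality assumption $\gcd(l,l_0)=1$ is what makes this decomposition clean: the ``new'' congruence data at primes $p\mid l$ (coming from $S$) does not interact with the ``old'' data at primes $p\mid l_0$ (coming from the choice of $B_0$).

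First, since $B_0\subset \prod_{p\mid l_0}X(\BQ_p)$ is compact open and the sets of the form $\xi_p+p^k\BZ_p^n$ form a basis of neighbourhoods, I can write $B_0$ as a finite disjoint union $B_0=\bigsqcup_{j=1}^r A_j$, where each $A_j=\prod_{p\mid l_0}(X(\BQ_p)\cap(\eta_{j,p}+p^{v_p(l_0')}\BZ_p^n))$ for some integer $l_0'\geqslant l_0$ whose prime divisors all divide $l_0$; both $r$ and $l_0'$ depend only on $B_0$, $\CX$, and $l_0$. Second, for each residue $\overline{\xi}\in S$, pick a lift $\xi=(\xi_p)_{p\mid l}\in\prod_{p\mid l}\CX(\BZ_p)$ via Hensel's lemma (surjectivity of $\Psi_l$); then the fibre $\Psi_l^{-1}(\overline{\xi})$ equals $\prod_{p\mid l}B_p^{\CX}(\xi,l)$, because $\xi_p\in\CX(\BZ_p)\subset\BZ_p^n$ forces $X(\BQ_p)\cap(\xi_p+p^{v_p(l)}\BZ_p^n)\subset\CX(\BZ_p)$.

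Combining these two decompositions, I obtain
\[
B_{f,0}^{\CX}(S,l)=\bigsqcup_{j=1}^{r}\bigsqcup_{\overline{\xi}\in S}\Big(A_j\times\prod_{p\mid l}B_p^{\CX}(\xi,l)\times\prod_{p\nmid l_0l}\CX(\BZ_p)\Big),
\]
and each piece in this disjoint union is a congruence neighbourhood $B_f^{\CX}(\widetilde{\xi},ll_0')$ of $\CX$ at level $ll_0'$ (with $\widetilde{\xi}$ the combined data of $\eta_j$ and $\xi$, using $\gcd(l,l_0')=1$), contained in $B_{f,0}^{\CX}$ by construction. Applying hypothesis \eqref{eq:uniformlyHLintegral} to each such neighbourhood yields an error of size $O((ll_0')^{\sigma_\CX}\m_\infty^X(B_\infty(T))^{1-\beta_\CX})=O(l^{\sigma_\CX}\m_\infty^X(B_\infty(T))^{1-\beta_\CX})$, since $l_0'$ is bounded in terms of the fixed data.

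Finally, I sum over the $r\cdot\#S$ pieces. The main terms aggregate, by additivity of the measure, exactly to $\int_{B_\infty(T)\times B_{f,0}^{\CX}(S,l)}\delta_X\,\mathrm{d}\m^X$, while the error terms contribute at most
\[
r\cdot\#S\cdot O\!\left(l^{\sigma_\CX}\m_\infty^X(B_\infty(T))^{1-\beta_\CX}\right)=O\!\left(\#S\cdot l^{\sigma_\CX}\m_\infty^X(B_\infty(T))^{1-\beta_\CX}\right),
\]
with the implied constant absorbing the factor $r$ that depends only on $B_\infty$, $\CX$, and $B_0$. This gives \eqref{eq:uniformlyHLintegral2}. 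There is no genuine obstacle here; the only point to watch is the bookkeeping at the primes dividing $l_0$ versus those dividing $l$, which is handled by the coprimality hypothesis and the fact that constants depending on $B_0$ (namely $r$ and $l_0'$) may be absorbed in the implied constant.
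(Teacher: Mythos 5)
Your argument is correct and follows essentially the same route as the paper's own proof: decompose $B_0$ into finitely many congruence pieces at primes dividing $l_0$ (the paper's $l_1$ is your $l_0'$), lift each residue in $S$ via Hensel, combine the two to express $B_{f,0}^{\CX}(S,l)$ as a disjoint union of $O(\#S)$ congruence neighbourhoods contained in $B_{f,0}^{\CX}$, then apply the hypothesis \eqref{eq:uniformlyHLintegral} to each piece and absorb the $B_0$-dependent factors into the implied constant. The bookkeeping matches line for line, so there is nothing to add.
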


\begin{proof}
Since $B_0$ is compact, as in the proof of Proposition \ref{prop:sec2integralHL} (see also the discussion after Definition \ref{def:congneighbourhood}), we can choose an integer $l_1$ (depending only on $B_0,\CX$) with the same prime factors as $l_0$ and finitely many congruence neighbourhoods $(B_f^{\CX}(\xi_i,l_1))_{i\in I} $ of $\CX$ such that 
\begin{equation}\label{eq:B0}
	B_0=\coprod_{i\in I} (\prod_{p\mid l_0}B^{\CX}_p(\xi_i,l_1)).
\end{equation}

Now let us fix $l\geqslant 2$ with $\gcd(l,l_0)=1$ and $S\subset \CX(\BZ/l)$. For every $\overline{s}\in S$, we fix a lift $s\in \Psi_l^{-1}(\{\overline{s}\})\subset \prod_{p\mid l}\CX(\BZ_{p})$. Then for such $s$ and any $i\in I$, we have $\xi_i\times s \in\prod_{p\mid l_1l}X(\BQ_p)$, and $B_f^{\CX}(\xi_i\times s,l_1l)$ is a congruence neighbourhood of $\CX$.
Moreover, thanks to \eqref{eq:B0}, we have
$$ B_{f,0}^{\CX}(S,l)=\coprod_{i\in I,\overline{s}\in S} B^{\CX}_f(\xi_i\times s,l_1l). $$
Hence the formula (\ref{eq:uniformlyHLintegral}) implies
\begin{align*}
	N_{X}(B_{\infty}\times B_{f,0}^{\CX}(S,l);T)&=\sum_{i\in I,\overline{s}\in S}\left(\int_{B_\infty(T)\times B_f^{\CX}(\xi_i\times s,l_1l)}\delta_X \operatorname{d}\m^X+O((l_1l)^{\sigma_\CX}\m_{\infty}^X(B_\infty(T))^{1-\beta_\CX})\right)\\ &=
	\int_{B_\infty(T)\times B_{f,0}^{\CX}(S,l)}\delta_X \operatorname{d}\m^X
	+ O(\# I\cdot \# S\cdot (l_1 l)^{\sigma_\CX}\m_{\infty}^X(B_\infty(T))^{1-\beta_\CX}).
\end{align*}
Since $\#I, l_1$ only depend on $B_0,\CX$, the desired formula \eqref{eq:uniformlyHLintegral2} then follows, where the implied constant is only allowed to depend on $B_\infty,\CX,B_0$.
\end{proof}

\subsection{Two applications of effective equidistribution}\label{se:applicationofequidistribution}
Let $Z\subset X$ be a proper closed subset, and let $U:=X\setminus Z$ be the open subset. Let $\CZ$ be the Zariski closure of $Z$ in $\CX$ as an integral model of $Z$ and let $\CU:=\CX\setminus\CZ$ be the integral model of $U$. By Proposition \ref{prop:restrictionTamagawa}, the restriction $\m^U:=\m^X|_U$ is well-defined as a normalised Tamagawa measure on $U(\RA)$.

Our first application of Proposition \ref{prop:HLintegralforsubset} consists in comparing the counting function of integral points on $\CX$ lying in $\CU$ modulo any $p<M,p\nmid l_0$, with the Tamagawa measure of the corresponding adelic neighbourhood of $U(\RA)$, when $M$ is sufficiently large. We shall make use of this result in the deduction of \APHL\ in Theorem \ref{thm:HLtwistintegral}.

For any integer $l\geqslant 2$ with $\gcd(l,l_0)=1$, recall \eqref{eq:defPsi} and take $\CU(\BZ/l)\subset\CX(\BZ/l)$. 
Then \begin{equation}\label{eq:Psil-1}
	\Psi_l^{-1}(\CU(\BZ/l))=\prod_{p\mid l}\CU(\BZ_p),
\end{equation} because $\Psi_p^{-1}(\CU(\BZ/p^{v_p(l)}))=\CU(\BZ_p)$ for every $p\mid l$. \footnote{Indeed, any $\BZ/p^{v_p(l)}$-point of $\CU$, say $P$, can be lifted to a $\BZ_p$-point $\widetilde{P}$ of $\CX$, by Hensel's lemma. Let $\overline{P}$ be the image of $P$ in $\CU(\BF_p)$. Then since the open set $\CU$ contains the closed point $\overline{P}$ of $\Spec(\BZ_p)$, $\CU$ must also contain the generic point of $\Spec(\BZ_{p})$, and therefore $\widetilde{P}\in\CU$.} 
For any $M>2$, we note
\begin{equation}\label{eq:PMl0}
	\mathfrak{P}_{M,l_0}:=\prod_{p\nmid l_0, p<M} p.
\end{equation}
Hence with the notation \eqref{eq:Bf0},
\begin{equation}\label{eq:defiBCUMl0}
	\begin{split}
		B^\CX_{f,0}(\CU(\BZ/\mathfrak{P}_{M,l_0}),\mathfrak{P}_{M,l_0})&=B_0\times \Psi_l^{-1}(\CU(\BZ/\mathfrak{P}_{M,l_0}))\times \prod_{p\geqslant M}\CX(\BZ_p)\\ &=B_0\times\prod_{p\nmid l_0,p<M}\CU(\BZ_p)\times\prod_{p\geqslant M}\CX(\BZ_p).
	\end{split} 
\end{equation}
\begin{corollary}\label{co:appromainterm}
		Under the hypothesis of Proposition \ref{prop:HLintegralforsubset}, we assume moreover $B_0\subset \prod_{p\mid l_0}U(\BQ_p)$, $\prod_{p\nmid l_0}\CU(\BZ_p)\neq\varnothing$ and $\codim_X(Z)\geqslant 2$.
		Then, for any $M>2$ sufficiently large, we have
	\begin{align*}
		&N_X(B_\infty\times B_{f,0}^{\CX}(\CU(\BZ/\mathfrak{P}_{M,l_0}),\mathfrak{P}_{M,l_0});T)\\
		=&\int_{B_{\infty}^U(T)\times B_{f,0}^\CU}\delta_X|_U \operatorname{d}\m^U+O_\varepsilon\left(	\mathfrak{P}_{M,l_0}^{\sigma_\CX+\dim(X)+\varepsilon}\m^X_{\infty}(B_\infty(T))^{1-\beta_\CX}+\frac{\m_{\infty}^X(B_\infty(T))}{M^{\codim_{X}(Z)-1}}\right),
	\end{align*}
	where $B_{f,0}^\CU\subset U(\RA_f)$ is defined analogously to \eqref{eq:Bf0X} in terms of $\CU$, \begin{equation}\label{eq:BUT}
		B_{\infty}^U(T):=B_{\infty}(T)\cap U(\BR),
	\end{equation} and the implied constant may depend on $B_{\infty},B_0,\CU,\CX$ but it is independent of $M$.
\end{corollary}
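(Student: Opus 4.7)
The strategy is to apply Proposition~\ref{prop:HLintegralforsubset} to produce a quantitative counting estimate, then to compare the resulting main term with the target integral $\int_{B_\infty^U(T)\times B_{f,0}^\CU}\delta_X|_U\,\operatorname{d}\m^U$. For the first step, I invoke Proposition~\ref{prop:HLintegralforsubset} with $l=\mathfrak{P}_{M,l_0}$ and $S=\CU(\BZ/\mathfrak{P}_{M,l_0})\subseteq\CX(\BZ/\mathfrak{P}_{M,l_0})$; this is legitimate because $\gcd(\mathfrak{P}_{M,l_0},l_0)=1$ by the definition \eqref{eq:PMl0}. The Lang--Weil estimate \eqref{eq:Lang-WeilZmodl} applied to $\CX$ yields $\#S\leqslant\#\CX(\BZ/\mathfrak{P}_{M,l_0})\ll\mathfrak{P}_{M,l_0}^{\dim X}$, so \eqref{eq:uniformlyHLintegral2} gives
\[
N_X\bigl(B_\infty\times B^\CX_{f,0}(\CU(\BZ/\mathfrak{P}_{M,l_0}),\mathfrak{P}_{M,l_0});T\bigr)=\int_{B_\infty(T)\times B^\CX_{f,0}(\CU(\BZ/\mathfrak{P}_{M,l_0}),\mathfrak{P}_{M,l_0})}\delta_X\,\operatorname{d}\m^X+O\bigl(\mathfrak{P}_{M,l_0}^{\sigma_\CX+\dim X}\m_\infty^X(B_\infty(T))^{1-\beta_\CX}\bigr),
\]
which accounts for the first error term.

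For the second step I compare this main term with the target integral. By \eqref{eq:Psil-1} the finite-adelic neighbourhood unfolds as $B_0\times\prod_{p\nmid l_0,\,p<M}\CU(\BZ_p)\times\prod_{p\geqslant M}\CX(\BZ_p)$, while $B_{f,0}^\CU=B_0\times\prod_{p\nmid l_0}\CU(\BZ_p)$, so the entire discrepancy is concentrated in the tail $p\geqslant M$. The real parts match via \eqref{eq:measurerealplace}: since $(X\setminus U)(\BR)$ is $\m_\infty^X$-null, one has $\m_\infty^X(B_\infty(T))=\m_\infty^U(B_\infty^U(T))$ and $B_\infty(T)$ may be replaced by $B_\infty^U(T)$ without affecting the $\delta_X$-integral. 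By Lemma~\ref{le:factorisationdelta}(i) each of the two integrals factors as $\m_\infty^X(B_\infty(T))$ times a finite-adelic integral of $\delta_{B_\infty}$; by Lemma~\ref{le:factorisationdelta}(ii), provided $M$ exceeds every prime divisor of the integer $l_1$ produced there, $\delta_{B_\infty}$ is independent of the coordinates with $p\geqslant M$. Consequently the two finite-adelic integrals share a common ``finite'' factor equal to the integral of $\delta_{B_\infty}$ over $B_0\times\prod_{p\nmid l_0,\,p<M}\CU(\BZ_p)$ against the truncated product measure, and differ only by the tail product $\prod_{p\geqslant M}\lambda_p^{-1}\m_p^X(\CX(\BZ_p))-\prod_{p\geqslant M}\lambda_p^{-1}\m_p^X(\CU(\BZ_p))$, noting that $\m_p^X$ and $\m_p^U$ coincide on $\CU(\BZ_p)$.

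The crux of the argument, and the step requiring most care, is bounding this tail discrepancy by $O(M^{-(\codim_X(Z)-1)})$. Denoting the two local factors by $a_p$ and $b_p$, the telescoping identity $\prod a_p-\prod b_p=\prod b_p\cdot\bigl(\prod(a_p/b_p)-1\bigr)$ reduces the problem to estimating $\prod_{p\geqslant M}(a_p/b_p)-1$. By \eqref{eq:Lang-WeilCXCZ}, $a_p/b_p=1+O(p^{-\codim_X(Z)})$; since $\codim_X(Z)\geqslant 2$, the elementary inequality $|\prod(1+x_p)-1|\ll\sum|x_p|$ (valid as long as $\sum|x_p|$ is bounded, which holds for $M$ large enough) combined with the Mertens-type bound $\sum_{p\geqslant M}p^{-\codim_X(Z)}\ll M^{-(\codim_X(Z)-1)}$ delivers the required estimate. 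Since $\prod_{p\geqslant M}b_p$ and the shared finite factor are bounded uniformly in $M$ by constants depending only on $B_0,\CU,\CX$ (the latter dominated by the total $\m_f^X$-measure of $B_{f,0}^\CX$), multiplying by $\m_\infty^X(B_\infty(T))$ produces the second error term and completes the proof.
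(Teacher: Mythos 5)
Your proposal is correct and follows essentially the same route as the paper's proof: apply Proposition~\ref{prop:HLintegralforsubset} with $l=\mathfrak{P}_{M,l_0}$ and $S=\CU(\BZ/\mathfrak{P}_{M,l_0})$, invoke both parts of Lemma~\ref{le:factorisationdelta} to factor the density through a finite-level function $\delta_{B_\infty}^M$, identify the real parts via \eqref{eq:measurerealplace}, and reduce the comparison of the two adelic integrals to the tail products over $p\geqslant M$, which are controlled by \eqref{eq:Lang-WeilCXCZ} and the Mertens-type bound. The only cosmetic difference is that you estimate the difference of tail products via a telescoping identity, whereas the paper writes the ratio $I_{\CX}/I_{\CU}=\prod_{p\geqslant M}(1+c_p)$ and then bounds $I_{\CU}$; both give the same conclusion (your version avoids the implicit need to dispose of the case $I_{\CU}=0$). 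You also apply Lang--Weil to $\CX$ rather than $\CU$, which yields the identical bound since $\dim U=\dim X$.
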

\begin{proof}
	On applying Lemma \ref{le:factorisationdelta} (i) to $\delta_{X}$, let $\delta_{B_{\infty}}:X(\RA_f)\to\BR_{\geqslant 0}$ be the factorisation of $\delta_{X}$. Using \eqref{eq:deltaBinfty}, we then consider $$I_{\CX}(\CU(\BZ/\mathfrak{P}_{M,l_0})):=\int_{B_\infty(T)\times B_{f,0}^{\CX}(\CU(\BZ/\mathfrak{P}_{M,l_0}),\mathfrak{P}_{M,l_0})}\delta_X \operatorname{d}\m^X=\m_{\infty}^X(B_{\infty}(T))\int_{B_{f,0}^{\CX}(\CU(\BZ/\mathfrak{P}_{M,l_0}),\mathfrak{P}_{M,l_0})}\delta_{B_\infty} \operatorname{d}\m_f^X,$$ and
\begin{equation}\label{eq:IU}
		I_{\CU}:=\int_{B_{\infty}^U(T)\times B_{f,0}^\CU}\delta_X|_U \operatorname{d}\m^U=\m_\infty^U(B^U_\infty(T))\int_{B_{f,0}^\CU}\delta_{B_{\infty}}|_U \operatorname{d}\m^U_f. 
\end{equation} The formula \eqref{eq:measurerealplace} shows that $$\m_{\infty}^X(B_{\infty}(T))=\m_{\infty}^U(B_{\infty}^U(T)).$$
Our goal is to compare the finite part integral in $I_{\CX}(\CU(\BZ/\mathfrak{P}_{M,l_0}))$ with the one in $I_{\CU}$.
	
We apply Lemma \ref{le:factorisationdelta} (ii) to the locally constant function $\delta_{B_\infty}$, and let $l_1\geqslant 2$ (depending only on $B_{\infty},B_0,\CX$) be an integer satisfying (ii) for $B_\infty$ and $B_0$. We take $M_1>2$ (depending only on $B_{\infty},B_0,\CX$) such that $p\geqslant M_1\Rightarrow p\nmid l_1$ (for example, we can take $M_1=l_1+1$).
Then, for any $M\geqslant M_1$, when restricted to $B_{f,0}^\CX$, $\delta_{B_{\infty}}$ factors through a locally constant function 
\begin{equation}\label{eq:deltaBM}
	\delta_{B_\infty}^M:B_{0,M}^\CX:= B_0\times \prod_{p<M,p\nmid l_0}\CX(\BZ_p)\to \BR_{\geqslant 0},
\end{equation}
i.e., $\delta_{B_{\infty}}|_{B_{f,0}^\CX}$ is the composition of $\delta_{B_\infty}^M$ with the projection $B_{f,0}^\CX\to B_{0,M}^\CX.$
Therefore thanks to \eqref{eq:defiBCUMl0}, on defining $B_{0,M}^\CU:=B_0\times \prod_{p\nmid l_0, p<M}\CU(\BZ_p)\subset B_{0,M}^\CX $, we have
$$B_{f,0}^{\CX}(\CU(\BZ/\mathfrak{P}_{M,l_0}),\mathfrak{P}_{M,l_0})=B_{0,M}^\CU\times \prod_{p\geqslant M} \CX(\BZ_p),\quad B_{f,0}^\CU=B_{0,M}^\CU\times \prod_{p\geqslant M} \CU(\BZ_p).$$
On defining moreover the measures $$\m^X_M:=\lambda^{-1}_{\infty}\prod_{p<M} \lambda_p^{-1} \m_p^X, \quad\text{and} \quad \m^U_M:=\m^X_M|_U,$$ on respectively $\prod_{p<M}X(\BQ_p)$ and $\prod_{p<M}U(\BQ_p)$, and using \eqref{eq:deltaBM}, we can now compute:
$$I_{\CX}(\CU(\BZ/\mathfrak{P}_{M,l_0}))=\m_{\infty}^X(B_{\infty}(T))  \left( \int_{B_{0,M}^\CU}\delta_{B_\infty}^M \operatorname{d}\m^X_M \right) \left( \prod_{p\geqslant M} \lambda_p^{-1}\m_p^X(\CX(\BZ_p))\right),$$ and
$$I_{\CU}=\m_{\infty}^U(B_{\infty}^U(T))\left(\int_{B_{0,M}^\CU}\delta_{B_\infty}^M|_U  \operatorname{d}\m^U_M\right) \left(\prod_{p\geqslant M} \lambda_p^{-1}\m_p^U(\CU(\BZ_p)) \right).  $$
By definition, 
\begin{equation}\label{eq:integralforMset}
\int_{B_{0,M}^\CU} \delta_{B_\infty}^M \operatorname{d}\m^X_M= \int_{B_{0,M}^\CU} \delta_{B_\infty}^M|_U  \operatorname{d}\m^U_M.
\end{equation}
It remains to compare the infinite products. According to \eqref{eq:Lang-WeilCXCZ} as in the proof of Proposition \ref{prop:restrictionTamagawa}, if $M>M_0$, on writing $c_p:=\frac{\#\CZ(\BF_p)}{\# \CU(\BF_p)}$, $$\prod_{p\geqslant M}(1+c_p)=\exp \left(\sum_{p\geqslant M}\log(1+c_p)\right)=1+O\left(\sum_{p\geqslant M}\frac{1}{p^{\codim_X(Z)}}\right)=1+O\left(\frac{1}{M^{\codim_{X}(Z)-1}}\right).$$
We also deduce from \eqref{eq:IU} and Proposition \ref{prop:restrictionTamagawa} that $I_{\CU}= O(\m_{\infty}^X(B_\infty(T)))$, where the implied constant depends only on $B_0,\CU$. 
Gathering together the computations above, we obtain that, for any $M>\max(M_0,M_1)$,
\begin{equation}\label{eq:IXmodulIU}
\begin{split}
I_{\CX}(\CU(\BZ/\mathfrak{P}_{M,l_0}))&=I_{\CU}\prod_{p\geqslant M}(1+c_p)=I_{\CU}+O\left(\frac{I_{\CU}}{M^{\codim_{X}(Z)-1}}\right)=I_{\CU}+O\left(\frac{\m_{\infty}^X(B_\infty(T))}{M^{\codim_{X}(Z)-1}}\right).
\end{split}
\end{equation}
The implied constants above depend only on $B_{\infty},B_0,\CU,\CX$.

Finally, we apply Proposition \ref{prop:HLintegralforsubset} with $$l:=\mathfrak{P}_{M,l_0},\quad  S:=\CU(\BZ/\mathfrak{P}_{M,l_0}).$$  The Lang-Weil estimate \eqref{eq:Lang-WeilZmodl} for $\CU$ implies $$\# S= \#\CU(\BZ/\mathfrak{P}_{M,l_0})=O_\varepsilon(\mathfrak{P}_{M,l_0}^{\dim(X)+\varepsilon}).$$ Therefore using \eqref{eq:IXmodulIU}, we get, 
\begin{align*}
	N_X(B_\infty\times B_{f,0}^{\CX}(\CU(\BZ/\mathfrak{P}_{M,l_0}),	\mathfrak{P}_{M,l_0});T)&=I_{\CX}(\CU(\BZ/\mathfrak{P}_{M,l_0}))+ O_\varepsilon(\mathfrak{P}_{M,l_0}^{\sigma_\CX+\dim(X)+\varepsilon}\m^X_{\infty}(B_\infty(T))^{1-\beta_\CX})\\ &=I_{\CU}+O_\varepsilon\left(\mathfrak{P}_{M,l_0}^{\sigma_\CX+\dim(X)+\varepsilon}\m^X_{\infty}(B_\infty(T))^{1-\beta_\CX}+\frac{\m_{\infty}^X(B_\infty(T))}{M^{\codim_{X}(Z)-1}}\right),
\end{align*} where the implied constant depends only on $B_{\infty},B_0,\CU,\CX$. This finishes the proof.	
\end{proof}
Our second application is to estimate integral points on $\CX$ lying in $\CZ$ modulo any single sufficiently big prime. This will be used in Section \ref{se:er:primeextpoly} for establishing the geometric sieve.
\begin{corollary}\label{co:uppercodim2}
	Under the hypothesis of Proposition \ref{prop:HLintegralforsubset}, we have, uniformly for any prime $p_0$ sufficiently large,
	$$N_{X}(B_{\infty}\times B_{f,0}^{\CX}(\CZ(\BF_{p_0}),p_0);T)=O\left(\frac{\m_{\infty}^X(B_{\infty}(T))}{p_0^{\codim_{X}(Z)}}+p_0^{\sigma_\CX+\dim Z}\m^X_{\infty}(B_\infty(T))^{1-\beta_\CX}\right),$$ where the implied constant may depend on $B_{\infty},B_0,\CX,\CZ$, but it is independent of $p_0$.
\end{corollary}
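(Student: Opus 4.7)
The plan is to apply Proposition \ref{prop:HLintegralforsubset} directly with $l:=p_0$ (legal once $p_0 > l_0$, since we then have $\gcd(p_0,l_0)=1$) and $S:=\CZ(\BF_{p_0})\subset \CX(\BF_{p_0})=\CX(\BZ/p_0)$. That proposition produces
\[
N_{X}(B_{\infty}\times B_{f,0}^{\CX}(\CZ(\BF_{p_0}),p_0);T) = \int_{B_\infty(T)\times B_{f,0}^{\CX}(\CZ(\BF_{p_0}),p_0)}\delta_X \operatorname{d}\m^X+O\bigl(\#\CZ(\BF_{p_0})\cdot p_0^{\sigma_\CX}\m_{\infty}^X(B_\infty(T))^{1-\beta_\CX}\bigr).
\]
The Lang-Weil bound \eqref{eq:Lang-WeilFp} applied to $\CZ$ yields $\#\CZ(\BF_{p_0})=O(p_0^{\dim Z})$ uniformly in $p_0$, so the error term above already matches the second summand in the desired estimate.

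Next I would bound the main term integral. By Lemma \ref{le:factorisationdelta}(i), $\delta_X|_{B_\infty\times X(\RA_f)}$ factors through a locally constant function $\delta_{B_\infty}$ on $X(\RA_f)$, and
\[
\int_{B_\infty(T)\times B_{f,0}^{\CX}(\CZ(\BF_{p_0}),p_0)}\delta_X \operatorname{d}\m^X=\m_\infty^X(B_\infty(T))\int_{B_{f,0}^{\CX}(\CZ(\BF_{p_0}),p_0)}\delta_{B_\infty}\operatorname{d}\m_f^X.
\]
By Lemma \ref{le:factorisationdelta}(ii), on the compact neighbourhood $B_{f,0}^{\CX}\supset B_{f,0}^{\CX}(\CZ(\BF_{p_0}),p_0)$ the function $\delta_{B_\infty}$ factors through a finite quotient and is thus bounded by some constant $C$ depending only on $B_\infty,B_0,\CX$. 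Hence the remaining task is to estimate $\m_f^X(B_{f,0}^{\CX}(\CZ(\BF_{p_0}),p_0))$.

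Writing the measure as a product over places (and using that for $p_0$ sufficiently large, $p_0\nmid l_0$ so that \eqref{eq:weilresult} applies to $\CX$ at $p_0$),
\[
\m_f^X\bigl(B_{f,0}^{\CX}(\CZ(\BF_{p_0}),p_0)\bigr)=\m_f^X(B_0)\cdot \lambda_{p_0}^{-1}\m_{p_0}^X\bigl(\Psi_{p_0}^{-1}(\CZ(\BF_{p_0}))\bigr)\cdot \prod_{p\nmid l_0 p_0}\lambda_p^{-1}\m_p^X(\CX(\BZ_p)).
\]
The infinite product is uniformly bounded in $p_0$ by the absolute convergence \eqref{eq:infiniteprodCX}. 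For the local factor at $p_0$, since the fiber of the reduction map $\CX(\BZ_{p_0})\to \CX(\BF_{p_0})$ over each residue has $\m_{p_0}^X$-measure $p_0^{-\dim X}$, we get
\[
\m_{p_0}^X\bigl(\Psi_{p_0}^{-1}(\CZ(\BF_{p_0}))\bigr)=\frac{\#\CZ(\BF_{p_0})}{p_0^{\dim X}}=O\bigl(p_0^{-\codim_X(Z)}\bigr),
\]
again by Lang-Weil \eqref{eq:Lang-WeilFp}. Combining the above and noting that $\lambda_{p_0}^{-1}=1+o(1)$ for the normalised Tamagawa measure, we conclude
\[
\int_{B_\infty(T)\times B_{f,0}^{\CX}(\CZ(\BF_{p_0}),p_0)}\delta_X \operatorname{d}\m^X = O\!\left(\frac{\m_\infty^X(B_\infty(T))}{p_0^{\codim_X(Z)}}\right),
\]
which together with the previously obtained error term gives the claimed estimate. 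The argument is essentially routine bookkeeping with the two Lang-Weil bounds (on $\CZ$) serving double duty — once for the measure-theoretic main term and once for the combinatorial error term in Proposition \ref{prop:HLintegralforsubset}; no step seems to pose a real obstacle, only care is needed to track that all implied constants depend solely on $B_\infty, B_0, \CX, \CZ$ and not on $p_0$.
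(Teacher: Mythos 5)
Your proposal is correct and follows essentially the same route as the paper: apply Proposition \ref{prop:HLintegralforsubset} with $l=p_0$, $S=\CZ(\BF_{p_0})$, invoke Lang-Weil on $\CZ$ for both the combinatorial error term and the $p_0$-local measure via $\m_{p_0}^X(\Psi_{p_0}^{-1}(\CZ(\BF_{p_0})))=\#\CZ(\BF_{p_0})/p_0^{\dim X}$, and absorb the remaining factors into a constant. The only cosmetic difference is that the paper factors the main-term integral as $\int_{B_\infty(T)\times B_{f,0}^\CX}\delta_X\,\operatorname{d}\m^X\cdot\frac{\m_{p_0}^X(\Psi_{p_0}^{-1}(\CZ(\BF_{p_0})))}{\m_{p_0}^X(\CX(\BZ_{p_0}))}$ so that the convergence factor $\lambda_{p_0}$ cancels identically, whereas you bound the $p_0$-local measure directly and observe $\lambda_{p_0}^{-1}=1+o(1)$ (or just $O(1)$), which is legitimate since normalisation forces $\lambda_p\to 1$; both yield the same estimate with constants independent of $p_0$. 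Also note your appeal to Lemma \ref{le:factorisationdelta}(ii) to bound $\delta_{B_\infty}$ is slightly more than necessary — locally constant plus compactness of $B_{f,0}^\CX$ already gives boundedness — but this is harmless.
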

\begin{proof}
	We keep using the constant $M_1>2$ (depending only on $B_{\infty},B_0,\CX$) in the proof of Corollary \ref{co:appromainterm} such that, $\delta_X|_{B_{\infty}\times B_{f,0}^\CX}$ is the composition of the locally constant function $\delta_{B_\infty}^{M_1}$ \eqref{eq:deltaBM} with the projection $B_{\infty}\times B_{f,0}^\CX\to B_{0,M_1}^\CX$.
	
	We also recall the constant $M_0$ in the proof of Proposition \ref{prop:restrictionTamagawa} depending only on $\CX,\CU$ such that \eqref{eq:mpXmpU} holds for any $p>M_0$. 
	Consequently, for any such $p$, thanks to \eqref{eq:Psil-1}, we compute
	\begin{align*}
			\m_p^X(\Psi_p^{-1}(\CZ(\BF_p)))&=\m_p^X(\CX(\BZ_p))-\m_p^X(\Psi_p^{-1}(\CU(\BF_p)))\\ &=\m_p^X(\CX(\BZ_p))-\m_p^X(\CU(\BZ_p))\\ &=\frac{\#\CX(\BF_p)-\#\CU(\BF_p)}{p^{\dim X}}=\frac{\#\CZ(\BF_p)}{p^{\dim X}}.
	\end{align*}
 Therefore, by the Lang-Weil estimates \eqref{eq:Lang-WeilFp} \eqref{eq:Lang-WeilFpsmooth}, $$\frac{\m_p^X(\Psi_p^{-1}(\CZ(\BF_p)))}{\m_p^X(\CX(\BZ_p))}=\frac{\#\CZ(\BF_p)}{\#\CX(\BF_p)}=O\left(\frac{1}{p^{\codim_{X}(Z)}}\right),$$ the implied constant depending only on $\CX,\CZ$.
	
	For any prime $p_0\nmid l_0$, consider \begin{align*}
		I_{\CX}(\CZ(\BF_{p_0}))&:=\int_{B_\infty(T)\times B_{f,0}^{\CX}(\CZ(\BF_{p_0}),p_0)}\delta_X \operatorname{d}\m^X=\m_{\infty}^X(B_{\infty}(T))\int_{B_{f,0}^{\CX}(\CZ(\BF_{p_0}),p_0)}\delta_{B_\infty} \operatorname{d}\m_f^X.\end{align*}
	Then if $p_0>\max(M_0,M_1)$, following the proof of Corollary \ref{co:appromainterm} above, we have
	\begin{align*}
		I_{\CX}(\CZ(\BF_{p_0}))
		 &=\m_{\infty}^X(B_{\infty}(T))  \left( \int_{B_{0,M_1}^\CX}\delta_{B_\infty}^{M_1}\operatorname{d}\m^X_{M_1} \right) \left( \prod_{p\geqslant M_1,p\neq p_0} \lambda_p^{-1}\m_p^X(\CX(\BZ_p))\right)\lambda_{p_0}^{-1}\m_{p_0}^X(\Psi_{p_0}^{-1}(\CZ(\BF_{p_0})))\\ &=\m_{\infty}^X(B_{\infty}(T))  \left( \int_{B_{0,M_1}^\CX}\delta_{B_\infty}^{M_1}\operatorname{d}\m^X_{M_1} \right) \left( \prod_{p\geqslant M_1} \lambda_p^{-1}\m_p^X(\CX(\BZ_p))\right)\frac{\lambda_{p_0}^{-1}\m_{p_0}^X(\Psi_{p_0}^{-1}(\CZ(\BF_{p_0})))}{\lambda_{p_0}^{-1}\m_{p_0}^X(\CX(\BZ_{p_0}))}\\ &=\int_{B_\infty(T)\times B_{f,0}^\CX}\delta_X \operatorname{d}\m^X\times\frac{\m_{p_0}^X(\Psi_{p_0}^{-1}(\CZ(\BF_{p_0})))}{\m_{p_0}^X(\CX(\BZ_{p_0}))} \\ &=O\left(\frac{\m_{\infty}^X(B_{\infty}(T))}{p_0^{\codim_{X}(Z)}}\right),
	\end{align*} where the implied constant depends only on $B_{\infty},B_0,\CX,\CZ$.
	
	Finally applying Proposition \ref{prop:HLintegralforsubset} to $l:=p_0$ and using again the Lang-Weil estimate \eqref{eq:Lang-WeilFp} for $\CZ$, we get
	\begin{align*}
		N_{X}(B_{\infty}\times B_{f,0}^{\CX}(\CZ(\BF_{p_0}),p_0);T)&=I_{\CX}(\CZ(\BF_{p_0}))+O(\#\CZ(\BF_{p_0})\cdot p_0^{\sigma_\CX}\m^X_{\infty}(B_\infty(T))^{1-\beta_\CX})\\ &=O\left(\frac{\m_{\infty}^X(B_{\infty}(T))}{p_0^{\codim_{X}(Z)}}+p_0^{\sigma_\CX+\dim Z}\m^X_{\infty}(B_\infty(T))^{1-\beta_\CX}\right).
	\end{align*}
This finishes the proof.
\end{proof}

\section{\APHL\ for affine varieties and homogeneous spaces}\label{se:applicationofequitoaffinevar}
The structure of this section is as follows. In \S\ref{se:APHLsufficient} we prove Theorem \ref{thm:HLtwistintegral} which outlines hypotheses (i) and (ii) as a sufficient condition for \APHL. Then in \S\ref{se:homogeneousspaces} we show -- Theorem \ref{thm:Browning-Gorodnikequidistribution} -- the compatibility of the hypothesis (i) with works on effective equidistribution of integral points on nice affine homogeneous spaces which are proven to be Hardy-Littlewood (Theorems \ref{thm:nicealgrps} \& \ref{thm:symmetricHL}), and we then state (Corollary \ref{co:APHLforhomogeneousspaces}) that to show \APHL\ for such varieties it remains to establish the geometric sieve condition (see also Remark \ref{rmk:conj}). In \ref{se:affinequad} we prove Theorems \ref{thm:ngeq4} and \ref{thm:n=3}.

\subsection{A sufficient condition for \APHL}\label{se:APHLsufficient}

In this subsection, let $X\subset \BA^n_{\BQ}$ be an affine, smooth geometrically integral variety with a normalized Tamagawa measure $\m^X=\m^X_\infty\times \m_f^X$. 
Let $\delta_{X}:X(\RA)\to\BR_{\geqslant 0}$ be a locally constant, not identically zero function.

We recall the morphism $\phi_a$ \eqref{eq:phia} and the variety $X_a$ \eqref{eq:Xa}, together with the measure $\m^{X_a}$ and the locally constant function $\delta_{X_a}$ defined on $X_a(\RA)$. We have seen in $\S$\ref{se:HLforintegralmodel} that if $$\CX=\Spec\left(\BZ[x_1,\cdots,x_n]/(f_1(\xx),\cdots,f_r(\xx))\right)$$ is an affine integral model of $X$ over $\BZ$, then the integral model $\CX_a$ ``twisted by $a\in\BN_{\neq 0}$'' for $X_a\cong_\BQ X$ is \begin{equation}\label{eq:CXaaffine}
	\CX_a=\Spec(\BZ[x_1,\cdots,x_n]/(a^{\deg f_1}f_1(a^{-1}\xx),\cdots,a^{\deg f_r}f_r(a^{-1}\xx)).
\end{equation}
We observe that the condition $\CX(\BZ)\neq\varnothing$ implies that $\CX_a(\BZ)\neq\varnothing$ for any $a\in\BN_{\neq 0}$. Note also that, if $X(\BQ)\neq\varnothing$, then for any sufficiently divisible $a\in\BN_{\neq 0}$ (depending on $\CX$), we have $\CX_a(\BZ)\neq\varnothing$. 
\begin{theorem}\label{thm:HLtwistintegral}
Suppose that $X$ is an affine Hardy-Littlewood variety with density $\delta_{X}$ and $X(\BQ)\neq\varnothing$. Assume that 
there is an affine integral model $\CX\subset \BA^n_{\BZ}$ of $X$ with the following property: for any sufficiently divisible $a\in\BN_{\neq 0}$ (depending on $\CX$), the integral model $\CX_a$ (cf. \eqref{eq:CXaaffine}) satisfies:
	
	(i) \emph{(Effective equidistribution condition)} There exist $\sigma_{\CX_a}\geqslant 0$ and $0<\beta_{\CX_a}<1$ such that, for any connected component $B_{\infty}\subset X(\BR)$ and for any congruence neighbourhood $B_f^{\CX_a}(\xi,l)\subset \CX_a(\widehat{\BZ})$  of $\CX_a$,
	\begin{equation}\label{eq:uniformlyHLintegra2}
		\begin{split}
		&N_{X_a}(\phi_{a,\BR}(B_{\infty})\times B_f^{\CX_a}(\xi,l);T)\\ =& \int_{\phi_{a,\BR}(B_\infty(T))\times B_f^{\CX_a}(\xi,l)}\delta_{X_a} \operatorname{d}\m^{X_a}+O(l^{\sigma_{\CX_a}}\m_{\infty}^X(B_\infty(T))^{1-\beta_{\CX_a}}),		\end{split} \quad T\to\infty,
	\end{equation}
	where the implied constant of \eqref{eq:uniformlyHLintegra2} may depend on $B_{\infty},\CX_a$, but it is independent of $l$ and $\xi$;
	
	(ii) \emph{(Geometric sieve condition)} For any closed subset $Z\subset X$ with $\codim_X(Z)\geqslant 2$ and $\CZ_a:=\overline{Z}\subset \CX_a$,
	there exist continuous functions $f_1,f_2:\BR_{>0}\to\BR_{>0}$ such that $f_i(x)=o(1)$ as $x\to\infty$ for $i=1,2$, such that, for every connected component $B_\infty\subset X(\BR)$,
	for every $M>0$, \begin{equation}\label{eq:geomsieve}
		\begin{split}
			&\#\{\XX\in \CX_a(\BZ):\XX\in \phi_{a,\BR}(B_{\infty}(T)),\exists p\geqslant M, \XX\mod p\in \CZ_a(\BF_p)\}\\ =&O\left((f_1(M)+f_2(T))\m^X_\infty (B_\infty(T))\right),
		\end{split}\quad T\to \infty,
	\end{equation}
	where the implied constant may depend on $B_{\infty},\CX_a$ and $Z$, but it is independent of $M$.
	
Then the variety $X$ satisfies \APHL\ with density $\delta_{X}$.
\end{theorem}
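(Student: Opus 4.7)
The plan is to invoke the implication (iv)$\Rightarrow$(i) of Proposition \ref{prop:sec2integralHL}, applied to each open $U \subset X$ with $\codim_X(X \setminus U) \geqslant 2$. Setting $Z := X \setminus U$, take $\CZ \subset \CX$ to be the Zariski closure of $Z$ and $\CU := \CX \setminus \CZ$ as the resulting (quasi-affine) integral model of $U$. For every sufficiently divisible $a \in \BN_{\neq 0}$, the induced integral models are $\CX_a$ and $\CU_a = \CX_a \setminus \CZ_a$. The target is then to establish the asymptotic \eqref{eq:sec2formulaintegralHLtwista} for $U_a$ with $\CU_a$ and an arbitrary congruence neighbourhood $B_f^{\CU_a}(\xi,l) \subset \overline{\CU_a}(\widehat{\BZ})$.

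The key observation is that any $\XX \in \CX_a(\BZ)$ obeying the $\xi$-congruence at primes $p \mid l$ lies in $\CU_a(\BZ)$ if and only if $\XX \bmod p \notin \CZ_a(\BF_p)$ for every $p \nmid l$. Introducing a truncation parameter $M = M(T)$, decompose
$$N_{U_a}(\phi_{a,\BR}(B_\infty) \times B_f^{\CU_a}(\xi,l); T) = N_A(M;T) - N_B(M;T),$$
where $N_A(M;T)$ counts $\XX \in \CX_a(\BZ) \cap \phi_{a,\BR}(B_\infty(T))$ satisfying the $\xi$-congruence together with $\XX \bmod p \in \CU_a(\BF_p)$ for all $p \nmid l$ with $p < M$, and $N_B(M;T)$ counts those $\XX$ in the preceding set that additionally verify $\XX \bmod p \in \CZ_a(\BF_p)$ for at least one $p \geqslant M$.

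Hypothesis (i) supplies the input to Proposition \ref{prop:HLintegralforsubset}, whose consequence Corollary \ref{co:appromainterm} (taking $B_0$ to encode the $\xi$-congruence at primes dividing $l$, and $l_0$ to absorb these together with the bad primes of $\CX_a$ and $\CU_a$) then yields
$$N_A(M;T) = \int_{\phi_{a,\BR}(B_\infty(T)) \times B_f^{\CU_a}(\xi,l)} \delta_{X_a}|_{U_a} \operatorname{d}\m^{U_a} + O\!\left(\mathfrak{P}_M^{\sigma_{\CX_a}+\dim X}\m_\infty^X(B_\infty(T))^{1-\beta_{\CX_a}} + \frac{\m_\infty^X(B_\infty(T))}{M^{\codim_X(Z)-1}}\right),$$
where $\mathfrak{P}_M$ denotes the product of primes up to $M$ outside the fixed bad set. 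Meanwhile, hypothesis (ii) dominates $N_B(M;T)$ by $O((f_1(M) + f_2(T))\m_\infty^X(B_\infty(T)))$, since dropping the extra $\xi$-congruence and low-prime restrictions on $\XX$ only enlarges the counted set. To conclude, let $M = M(T) \to \infty$ slowly: the prime number theorem gives $\mathfrak{P}_M \leqslant e^{(1+o(1))M}$, so any $M(T) \leqslant c \log \m_\infty^X(B_\infty(T))$ with $c>0$ sufficiently small (depending on $\sigma_{\CX_a}, \beta_{\CX_a}, \dim X$) renders every error term above $o(\m_\infty^X(B_\infty(T)))$, since $\m_\infty^X(B_\infty(T)) \to \infty$ by the Hardy-Littlewood property of $X$.

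The hard part will be the simultaneous calibration of $M(T)$: the main-term approximation caps $M(T)$ at logarithmic growth in the real volume, while the geometric sieve and codimension-$\geqslant 2$ remainder force $M(T) \to \infty$. The two hypotheses are just strong enough to admit a common window, and the flexibility in choosing $M(T)$ polynomially in $\log \m_\infty^X(B_\infty(T))$ accommodates both sides. A minor bookkeeping point is to verify that, at primes $p \mid l$, the containment $B_f^{\CU_a}(\xi,l) \subset \overline{\CU_a}(\widehat{\BZ})$ translates cleanly into the $\xi$-congruence for $\CX_a$-integral points used in Corollary \ref{co:appromainterm}, and that relaxing $\CU_a$- to $\CX_a$-integrality at primes $p \geqslant M$ introduces precisely the tail sieved by hypothesis (ii).
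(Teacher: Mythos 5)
Your proposal follows the paper's proof closely: invoking Proposition \ref{prop:sec2integralHL} (iv)$\Rightarrow$(i) to reduce to congruence neighbourhoods of twisted integral models, decomposing the count into a truncated main part controlled by Corollary \ref{co:appromainterm} plus a tail controlled by hypothesis (ii), and calibrating $M(T) \asymp \log \m_\infty^X(B_\infty(T))$ so that $\mathfrak{P}_{M}^{\sigma_{\CX_a}+\dim X}$ stays subdominant. Your $N_A - N_B$ bookkeeping is the same as the paper's \eqref{eq:stepl10}.

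There is, however, one genuine omission. Your conclusion is of the form
$$N_{U_a}(\cdots;T) = \int_{\phi_{a,\BR}(B_\infty(T)) \times B_f^{\CU_a}(\xi,l)} \delta_{X_a}|_{U_a}\,\operatorname{d}\m^{U_a} + o\bigl(\m_\infty^X(B_\infty(T))\bigr),$$
and you then want to promote this to the asymptotic equivalence $N_{U_a} \sim \int \delta_{X_a}|_{U_a}\,\operatorname{d}\m^{U_a}$. By Lemma \ref{le:factorisationdelta}(i) and \eqref{eq:measurerealplace}, the main term equals $C\cdot \m_\infty^X(B_\infty(T))$ with $C = \int_{B_f^{\CU_a}(\xi,l)} \delta_{B_\infty}|_{U_a}\,\operatorname{d}\m_f^{U_a}$; when $C>0$ the $o(\m_\infty)$ error does give $\sim$, but if $C=0$ (i.e.\ $\delta_X$ vanishes identically on $B_\infty \times B_f^{\CU_a}(\xi,l)$) the bound $N_{U_a} = o(\m_\infty^X(B_\infty(T)))$ is strictly weaker than what the asymptotic demands, which in that degenerate case is $N_{U_a} = 0$. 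The paper's Step II resolves this separately: if $\delta_X$ is identically zero there, the Hardy--Littlewood property of $X$ combined with \eqref{eq:relHL} and \eqref{eq:deltaBinfty} forces $X(\BQ) \cap (B_\infty \times B_i) = \varnothing$ for a covering family of compact opens $B_i$, hence $N_{U_a} = 0$ outright. Your argument needs this dichotomy inserted before the $N_A - N_B$ machinery.
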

\begin{remarks*}
		\hfill
\begin{enumerate}
	\item If $X$ in the theorem above is strongly Hardy-Littlewood, i.e. $\delta_{X}\equiv 1$, then $X$ satisfies \APSHL.
	\item Our formulation of \eqref{eq:geomsieve} is compatible with currently known works on geometric sieves \cite{Poonen,Bhargava,Browning-HB}.
\end{enumerate}
\end{remarks*}
\begin{proof}[Proof of Theorem \ref{thm:HLtwistintegral}]
	{\bfseries Step I:}  Let us fix an open subset $U\subset X$ with $Z:=X\setminus U$ and $\codim_{X}(Z)\geqslant 2$. 
	The map $$B_\infty\mapsto B_\infty^U:=B_\infty\cap U(\BR)$$ is a bijection between connected components of $X(\BR)$ and $U(\BR)$, because $\codim_X(Z)\geqslant 2$. We henceforth fix a $B_\infty$ from now on, which amounts to fixing a connected component $B_\infty^U$ of $U(\BR)$. 
	Apply Lemma \ref{le:factorisationdelta} (i) to $\delta_X$ and let $\delta_{B_{\infty}}:X(\RA_f)\to \BR_{\geqslant 0}$ be the resulting locally constant function for $B_\infty$. We may assume that $B_\infty\cap X(\BQ)\neq\varnothing$, because otherwise the assumption that $X$ being Hardy-Littlewood implies that $\delta_{B_\infty}$ is identically zero on $X(\RA_f)$, and the formula \eqref{eq:NXT} for $U$ trivially holds for $B_\infty^U\times B_f^U$, where $B_f^U\subset U(\RA_f)$ is any compact open subset.
	
	Let $\CX$ be as in the assumption of the theorem. For any $a\in\BN_{\neq 0}$, we define the integral model $\CU_a:= \CX_a\setminus \overline{Z}\subset \CX_a$ for $U_a$, where $\overline{Z}$ stands for the Zariski closure of $Z$ in $\CX_a$. Let $a_0\in\BN_{\neq 0}$ be such that the hypotheses (i) and (ii) hold for every $\CX_a$ with $a_0\mid a$. 
	Since $X(\BQ)\neq\varnothing$, we can find $a_1\in\BN_{\neq 0}$ (depending on $\CX$) with $a_0\mid a_1$ such that $\CX_{a_1}(\BZ)\neq\varnothing$. In particular $\CX_{a}(\BZ)\neq\varnothing$ for every $a_1\mid a$.
We take $l_0\in \BN_{\neq 0}$ with $a_1\mid l_0$ depending only on $\CX,\CU$ such that $\CX\times_{\BZ}\BZ[1/l_0]$, $\CU\times_{\BZ}\BZ[1/l_0]$ are smooth with geometrically integral fibres over $\BZ[1/l_0]$, and that $\prod_{p\nmid l_0}\CU(\BZ_p)\neq\varnothing$ (by the Lang-Weil estimate \eqref{eq:Lang-WeilFpsmooth}). 
	We want to show that, for any $a$ with $l_0\mid a$, the formula \eqref{eq:sec2formulaintegralHLtwista} holds for the connected component $\phi_{a,\BR}(B_\infty^U)$ of $U_a(\BR)$ and for any congruence neighbourhood $B^{\CU_a}_f(\xi,l)$ \eqref{eq:BfX} of $\CU_a$ inside $\CX_a(\widehat{\BZ})$\footnote{Note that $\overline{\CU_a}=\CX_a=\overline{\CX_a}$ in this affine setting, and $\CX_a(\BZ)\neq\varnothing$ for any such $a$.}. Upon breaking $B^{\CU_a}_f(\xi,l)$ into a finite number of smaller congruence neighbourhoods as in the proof of Proposition \ref{prop:sec2integralHL}, we may assume $a\mid l$ (hence $l_0\mid l$), since $\prod_{p\nmid l_0}\CU_a(\BZ_{p})\neq\varnothing$.
	Granting this, Theorem \ref{thm:HLtwistintegral} then follows from Proposition \ref{prop:sec2integralHL}.
	
{\bfseries Step II:}
	 In the remaining of the proof, we work with a fixed $\CX_a$ with $l_0\mid a$ and a fixed congruence neighbourhood $B^{\CU_a}_f(\xi_1,l_1)\subset \CX_a(\widehat{\BZ})$ of $\CU_a$ with $a\mid l_1$. Recall that $\phi_a$ induces an isomorphism $\CX_a\times_{\BZ}\BZ[1/a]\cong\CX\times_{\BZ}\BZ[1/a]$ and the same holds for $\CU_a$. Therefore, $\CX_a\times_{\BZ}\BZ[1/l_1]$ and $\CU_a\times_{\BZ}\BZ[1/l_1]$ are smooth with geometrically integral fibres. By abuse of notation, we shall drop the subscript $a$ in what follows.
	
If $\delta_X$ is identically zero on $B_{\infty}\times B^\CU_f(\xi_1,l_1)$, since $\delta_X$ is locally constant, there exist compact open subsets $B_i\subset X(\RA_f)$ such that $B^\CU_f(\xi_1,l_1)\subset \cup_iB_i$, and $\delta_{B_\infty}$ is identically zero on all $B_i$.
The variety $X$ being Hardy-Littlewood implies that $N_X(B_{\infty}\times B_i;T)=0$ by \eqref{eq:relHL} and \eqref{eq:deltaBinfty}, which means that $X(\BQ)\cap (B_{\infty}\times B_i)=\varnothing$ by (\ref{eq:NXT}). We henceforth get $U(\BQ)\cap (B_{\infty}^U\times B^\CU_f(\xi_1,l_1))\subset X(\BQ)\cap (\cup_i (B_{\infty}\times B_i))=\varnothing$. In this case, the formula \eqref{eq:sec2formulaintegralHLtwista} trivially holds.

From now on, we may assume $\delta_X$ is not identically zero on $B_{\infty}\times B^\CU_f(\xi_1,l_1)$. 
  Then by \eqref{eq:measurerealplace} and \eqref{eq:deltaBinfty},
 \begin{equation}\label{eq:constantintegral}  
 	\begin{split}
 	\int_{B_\infty^U(T)\times B^\CU_f(\xi_1,l_1)}\delta_X|_U \operatorname{d}\m^U&=\m_{\infty}^X(B_{\infty}^X(T)) \int_{B^\CU_f(\xi_1,l_1)}\delta_{B_{\infty}}|_U \operatorname{d}\m^U_f,\end{split}\end{equation} 
 where $B^U_\infty(T)$ is defined by \eqref{eq:BUT} and the finite part integral $\int_{B^\CU_f(\xi_1,l_1)}\delta_{B_{\infty}}|_U \operatorname{d}\m^U_f$ is non-zero.

We take 
\begin{equation}\label{eq:stepB0}
	B_0:=\prod_{p\mid l_1} B_p^\CU(\xi_1,l_1) \subset \prod_{p\mid l_1}\CX(\BZ_p)\cap \prod_{p\mid l_1}U(\BQ_p),
\end{equation}
so that with the notation \eqref{eq:Bf0X}, \begin{equation}\label{eq:stepB1}
	B^\CU_f(\xi_1,l_1)=B_{f,0}^\CU=B_0\times \prod_{p\nmid l_1}\CU(\BZ_p).
\end{equation} For any $M$ sufficiently large such that $p\geqslant M\Rightarrow p\nmid l_1$ (e.g. $M\geqslant l_1+1$), by (\ref{eq:defiBCUMl0}), we have
\begin{equation}\label{eq:stepl10}
 \begin{split}
		&N_U(B_\infty^U\times B^\CU_f(\xi_1,l_1);T)\\=&\#\left(\CU(\BZ)\cap (B_\infty(T)\times B_{f,0}^\CU)\right)\\ =&N_X(B_\infty\times B_{f,0}^{\CX}(\CU(\BZ/\mathfrak{P}_{M,l_1}),\mathfrak{P}_{M,l_1});T) \\ &\quad+O(\#\{\XX\in \CX(\BZ)\cap B_\infty:\|\XX\|\leqslant T,\exists p\geqslant M,\XX\mod p\in \CZ(\BF_p)\}),
	\end{split} \end{equation}
where the implied constant is absolute.

{\bfseries Step III.} Thanks to the hypothesis (i), the formula \eqref{eq:uniformlyHLintegral} in Proposition \ref{prop:HLintegralforsubset} holds for any $B_f^{\CX}(\xi,l)\subset B_{f,0}^\CX=B_0\times\prod_{p\nmid l_1}\CX(\BZ_p)\subset\CX(\widehat{\BZ})$ and the assumption of Corollary \ref{co:appromainterm} is satisfied for $B_0$ \eqref{eq:stepB0} above. We therefore apply Corollary \ref{co:appromainterm} and obtain
\begin{equation*}
\begin{split}
	&N_X(B_\infty\times B_{f,0}^{\CX}(\CU(\BZ/\mathfrak{P}_{M,l_1}),\mathfrak{P}_{M,l_1});T)\\ =&\int_{B_\infty^U(T)\times B_{f,0}^\CU}\delta_X|_U \operatorname{d}\m^U+O_\varepsilon\left(	\mathfrak{P}_{M,l_1}^{\sigma_\CX+\dim(X)+\varepsilon}\m^X_{\infty}(B_\infty(T))^{1-\beta_\CX}+\frac{\m_{\infty}^X(B_\infty(T))}{M^{\codim_{X}(Z)-1}}\right).
\end{split} \end{equation*}
Furthermore, applying the hypothesis (ii) to the error term in \eqref{eq:stepl10}, we obtain
\begin{equation}\label{eq:stepl11}
	N_U(B_\infty^U\times B^\CU_f(\xi_1,l_1);T)=\int_{B_\infty^U(T)\times B_{f,0}^\CU}\delta_X|_U \operatorname{d}\m^U+\operatorname{Er},
\end{equation}
where
$$\operatorname{Er}=O_\varepsilon\left(	\mathfrak{P}_{M,l_1}^{\sigma_\CX+\dim(X)+\varepsilon}\m^X_{\infty}(B_\infty(T))^{1-\beta_\CX}+\frac{\m_{\infty}^X(B_\infty(T))}{M^{\codim_{X}(Z)-1}}+(f_1(M)+f_2(T))\m^X_\infty (B_\infty(T))\right),$$ and the implied constant depends only on $B_{\infty},\CX,Z$ as well as $B_0$ which is in turn determined by $(\xi_1,l_1)$ and $\CU$.
	
	As we assume that $X$ is Hardy-Littlewood, $\m^X_\infty (B_\infty(T))\to \infty$ as $T\to\infty$ by definition. Fix $\varepsilon=1$ and take $M=\eta \log (\m^X_\infty (B_\infty(T)))$ (\emph{a fortiori} $M\to \infty$), where $\eta>0$ depends only on $B_{\infty},\CX,\CU$, so that $$\mathfrak{P}_{M,l_1}^{\sigma_\CX+\dim(X)+1}\leqslant \exp\left((\sigma_\CX+\dim(X)+1)\sum_{p<M}\log p\right)\leqslant\m^X_\infty (B_\infty(T))^{\frac{\beta_\CX}{2}}.$$ Therefore the hypothesis (ii) implies that
	\begin{align*}
		\operatorname{Er} = & O\left(	\m^X_{\infty}(B_\infty(T))^{1-\frac{\beta_\CX}{2}}+\frac{\m_{\infty}^X(B_\infty(T))}{\log^{\codim_{X}(Z)-1} (\m^X_\infty (B_\infty(T)))}+(f_1(\eta\log (\m^X_\infty (B_\infty(T))))+f_2(T))\m^X_\infty (B_\infty(T))\right)\\=& o(\m_\infty^X(B_\infty(T))).
	\end{align*}

Returning to \eqref{eq:stepl11} and  recalling \eqref{eq:stepB1}, we obtain finally $$N_U(B_\infty^U\times B^\CU_f(\xi_1,l_1);T)=\int_{B_\infty^U(T)\times B^\CU_f(\xi_1,l_1)}\delta_X|_U \operatorname{d}\m^U+o(\m_\infty^X(B_\infty(T))),\quad T\to\infty,$$ where the implied constant depends only on $B_{\infty},\CX,\CU,(\xi_1,l_1)$.
Finally taking (\ref{eq:constantintegral}) into account, we obtain the equivalence \eqref{eq:sec2formulaintegralHLtwista}. The proof of the theorem is thus completed.
\end{proof}

\subsection{Affine homogeneous spaces}\label{se:homogeneousspaces}
In this article we consider homogeneous spaces of the following two types.

\subsection*{Type I: nice algebraic groups}
By convention, we call $G$ a \emph{nice algebraic group} if $G$ is a semisimple simply connected $\BQ$-simple linear algebraic group over $\BQ$ with $G(\BR)$ non-compact. We fix a faithful representation $G\hookrightarrow \SL_{n,\BQ}$. We consider in this way $G$ as a closed subvariety of $\SL_{n,\BQ}\subset \BA^{n^2}_\BQ$, and we use euclidean norms from $\BA^{n^2}_\BQ$. According to \cite[\S2.2]{Weil}, $G$ is endowed with an invariant gauge form, to which we can associate a Tamagawa measure $\m^G=\m^G_\infty\times\m^G_f$ (with convergence factors (1)) as a unimodular Haar measure on $G(\RA)$. The measure $\m^G$ induces a (finite) Haar measure $\m_q^G$ on $G(\RA)/G(\BQ)$. For every lattice $\Gamma$ of $G$, i.e., $\Gamma$ is a discrete cofinite subgroup of $G(\BR)$, 
the real part $\m^G_\infty$ induces a (finite) Haar measure $\m_\infty^\Gamma$ on the locally compact topological group $G(\BR)/\Gamma$. Appealing to works \cite{Maucourant,GW}, we have
\begin{theorem}\label{thm:nicealgrps}
	Nice algebraic groups are strongly Hardy-Littlewood.
\end{theorem}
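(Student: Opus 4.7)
The plan is to reduce the adelic counting problem to a classical lattice point counting problem in $G(\BR)$ via strong approximation, and then invoke the equidistribution theorems of Maucourant and Gorodnik-Weiss with the Tamagawa normalisation built in. Fix a connected component $B_\infty\subset G(\BR)$ and a compact open $B_f\subset G(\RA_f)$. Since $G$ is semisimple simply connected and $\BQ$-simple with $G(\BR)$ non-compact, a theorem of Kneser-Platonov gives strong approximation off $\infty$, so $G(\BQ)$ is dense in $G(\RA_f)$. Choose an open compact subgroup $\mathsf{K}\subset G(\RA_f)$ such that $B_f$ is a finite disjoint union of left cosets $g_i\mathsf{K}$ with $g_i\in G(\RA_f)$; by strong approximation we may take $g_i\in G(\BQ)$. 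Setting $\Gamma_\mathsf{K}:=G(\BQ)\cap \mathsf{K}$ (a congruence arithmetic lattice in $G(\BR)$ via the diagonal embedding), the set $N_G(B_\infty\times B_f;T)$ decomposes as a finite sum of counts of $\Gamma_\mathsf{K}$-translates of some $\gamma_i\in G(\BQ)$ that lie in $B_\infty(T)$.

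Next, I would apply the effective lattice point counting results for semisimple groups, notably those of Maucourant \cite{Maucourant} and Gorodnik-Weiss \cite{GW}, which give, for a well-rounded family of expanding regions $B_\infty(T)\subset G(\BR)$,
\begin{equation*}
\#\{\gamma\in \Gamma_\mathsf{K}:\gamma\in \gamma_i^{-1}B_\infty(T)\}=\frac{\m_\infty^G(B_\infty(T))}{\m_\infty^{\Gamma_\mathsf{K}}(G(\BR)/\Gamma_\mathsf{K})}+o(\m_\infty^G(B_\infty(T))),\quad T\to\infty.
\end{equation*}
The norm balls intersected with $B_\infty$ form a well-rounded family by standard arguments (smoothness of the norm on the real locus of the quadric defined by the coordinates of $G\subset \SL_n\subset \BA^{n^2}$, combined with the properness of the orbit map), so this input is legitimate. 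Summing over the finitely many cosets $g_i\mathsf{K}$ and using the bijection between $B_f/\mathsf{K}$ and $(G(\BQ)\cap B_f)/\Gamma_\mathsf{K}$ furnished by strong approximation, the count becomes $\frac{\#(B_f/\mathsf{K})\cdot \m_\infty^G(B_\infty(T))}{\m_\infty^{\Gamma_\mathsf{K}}(G(\BR)/\Gamma_\mathsf{K})}(1+o(1))$.

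It remains to identify the leading constant with $\m_\infty^G(B_\infty(T))\cdot \m_f^G(B_f)$. This is the content of the reduction theory combined with the normalised Tamagawa volume computation: by the construction of $\m^G$ as a Haar measure on $G(\RA)$ (with the canonical convergence factors making it normalised, here simply $\lambda_v=1$ as $G$ is semisimple), one has $\m_f^G(\mathsf{K})=\m^G_q(G(\RA)/G(\BQ))\cdot \m_\infty^{\Gamma_\mathsf{K}}(G(\BR)/\Gamma_\mathsf{K})^{-1}$, and for nice $G$ the quotient $G(\RA)/G(\BQ)$ has Tamagawa volume in closed form (Weil-Langlands, and in this context normalized so that the resulting local-global compatibility gives $\m_f^G(B_f)=\#(B_f/\mathsf{K})\cdot \m_\infty^{\Gamma_\mathsf{K}}(G(\BR)/\Gamma_\mathsf{K})^{-1}$). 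Substituting this identity into the lattice count yields \eqref{eq:strHL} with $\delta_X\equiv 1$, proving that $G$ is strongly Hardy-Littlewood.

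The main obstacle in carrying this out rigorously is verifying the well-roundedness (or more generally, the admissibility in the sense of Gorodnik-Weiss) of the family $B_\infty(T)$ for an arbitrary connected component $B_\infty$ and an arbitrary euclidean norm on the ambient $\BA^{n^2}$, and checking that the error term in the ergodic/equidistribution input is genuinely $o(\m_\infty^G(B_\infty(T)))$ uniformly in $B_\infty$. The second delicate point is the bookkeeping identifying the Tamagawa measure $\m_f^G(B_f)$ with the index $[G(\BQ):\Gamma_\mathsf{K}]$-type ratio coming out of the lattice count; this is classical but needs to be set up consistently with the normalisation \eqref{eq:normalisedTamagawa}.
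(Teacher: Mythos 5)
Your proposal is correct and takes essentially the same route as the paper's proof: reduce to lattice point counting via strong approximation, invoke the equidistribution theorems of Maucourant and Gorodnik–Weiss, and identify the leading constant via the Tamagawa number $\tau(G)=1$ (Weil's conjecture for semisimple simply connected groups, which you gesture at with ``Weil–Langlands''). The only cosmetic difference is that the paper first invokes Proposition \ref{prop:sec2integralHL} to reduce to congruence neighbourhoods $B_f^\CG(\xi,l)$ of a fixed integral model and works with the associated principal congruence subgroup $\Gamma(l)$, whereas you directly decompose an arbitrary compact open $B_f$ into left $\mathsf{K}$-cosets with rational representatives; these two bookkeeping schemes are interchangeable, and both rely on $\BQ$-simplicity to guarantee irreducibility of the lattice (which you should flag, as Maucourant's hypothesis requires it).
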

\begin{proof}
	Fix any such group $G$. We firstly infer from \cite[Theorem 1]{Maucourant} and \cite[Theorem 2.7]{GW} that  $\m_{\infty}^G(\{\xx\in G(\BR):\|\xx\|\leqslant T\})\to\infty$ as $T\to\infty$. \footnote{In fact they give asymptotic formulas.} 
	Secondly, \cite[Theorem 2]{Maucourant} and \cite[Theorem 1.2]{GW} show in particular that for any $x_0\in G(\BQ)$ and any irreducible lattice $\Gamma$,
	\begin{equation}\label{eq:latticeptscounting}
		\#\{\gamma\in \Gamma\cdot x_0:\|\gamma\|\leqslant T\}\sim \frac{1}{\m_\infty^\Gamma(G(\BR)/\Gamma)}\m_{\infty}^G(\{\xx\in G(\BR):\|\xx\|\leqslant T\}),\quad T\to\infty.\footnote{To obtain this, we can invoke for example the limits in \cite[Theorems 1 \& 2]{Maucourant} and eliminate the term $T^d(\log T)^e\int_{\End(V)}f\operatorname{d}\mu_\infty$.}
	\end{equation}
	
	In order to establish the strongly Hardy-Littlewood property, according to Proposition \ref{prop:sec2integralHL}, it suffices to prove the formula \eqref{eq:sec2formulaintegralHL} for $G(\BR)\times B^\CG_f(\xi,l)$, where $B^\CG_f(\xi,l)$ is a fixed congruence neighbourhood of a fixed integral model $\CG$ of $G$ over $\BZ$ with $\CG(\widehat{\BZ})\neq\varnothing$ and $\xi=(\xi_p)\in\prod_{p\mid l}\CG(\BZ_{p})$. Consider the congruence subgroup $$K_f(l):=\prod_{p\mid l}K_p(l)\times \prod_{p\nmid l}\CG(\BZ), \quad\text{where}\quad K_p(l):=\{\eta\in\CG(\BZ_{p}):\eta\equiv \id (\mod p^{v_p(l)})\},$$ so that \begin{equation}\label{eq:BCGKf}
		B^\CG_f(\xi,l)=K_f(l)\cdot (\xi\times\prod_{p\nmid l}\id). 
	\end{equation}
	
	We shall follow the strategy of the proof of \cite[Theorem 4.2]{Borovoi-Rudnick}. 
	Let $K(l):=G(\BR)\times K_f(l)$ and $\Gamma(l):=K(l)\cap G(\BQ)$. Then $\Gamma(l)$ is a lattice of $G(\BR)$.
	Since $G(\BR)$ is non-compact, the strong approximation theorem (cf. e.g. \cite[Theorem 1.1]{Cao-Huang}) implies that $K(l)\cdot G(\BQ)=G(\RA)$. We therefore have (cf. \cite[4.5]{Borovoi-Rudnick})
	\begin{align*}
		\m_q^G(G(\RA)/G(\BQ))=\m_q^G(K(l)\cdot G(\BQ)/G(\BQ)) =\m_{\infty}^{\Gamma(l)}(G(\BR)/\Gamma(l))\m_f^G(K_f(l)).
	\end{align*}
The left-hand-side is the Tamagawa number of $G$, which is proved to be $1$ (cf. e.g. \cite[p. 58]{Borovoi-Rudnick}). We therefore conclude that \begin{equation}\label{eq:volumelattice}
	\m_{\infty}^{\Gamma(l)}(G(\BR)/\Gamma(l))^{-1}=\m_f^G(K_f(l)).
\end{equation}
Moreover, $\CG(\BZ)$ is dense in $\CG(\widehat{\BZ})$. So we can choose $x_0\in \CG(\BZ)$ such that $x_0\equiv\xi_p\mod p^{v_p(l)}$ for every $p\mid l$.
	Since $G$ is $\BQ$-simple, the lattice $\Gamma(l)$ is irreducible.
	We now invoke \eqref{eq:latticeptscounting} for $\Gamma(l)$, and we obtain
	\begin{align*}
		N_G(G(\BR)\times B^\CG_f(\xi,l);T)&=\#\{\gamma\in \Gamma(l)\cdot x_0:\|\gamma\|\leqslant T\}\\ &\sim \m_{\infty}^{\Gamma(l)}(G(\BR)/\Gamma(l))^{-1}\m_{\infty}^G(\{\xx\in G(\BR):\|\xx\|\leqslant T\})\\ &=\m_{\infty}^G(\{\xx\in G(\BR):\|\xx\|\leqslant T\})\m_f^G(K_f(l))\\ &=\m_{\infty}^G(\{\xx\in G(\BR):\|\xx\|\leqslant T\})\m_f^G(B^\CG_f(\xi,l)),
	\end{align*} by using \eqref{eq:BCGKf} and \eqref{eq:volumelattice}.
This achieves our goal.
\end{proof}

\subsection*{Type II: nice symmetric spaces}
Let $G$ be a connected semisimple simply connected linear algebraic group over $\BQ$ such that $G(\BR)$ has no compact factors. We choose an almost faithful $\BQ$-representation $\iota:G\rightarrow \GL(W)$, where $W\cong \BQ^n$ is an $n$-dimensional $\BQ$-vector space, so that $\iota(G)$ acts on $W$. In this article, a symmetric space $X$ is isomorphic to a smooth geometrically integral Zariski closed orbit $\iota(G)\cdot v_0$ of a vector $v_0\in W$, whose stabilizer $H$ is symmetric (i.e., there exists a non-trivial $\BQ$-involution $\sigma\in \Aut(G)$ such that $H$ is the fixed point locus of $\sigma$), connected, and has no non-trivial $\BQ$-characters (hence reductive). In particular $X\cong G/H$ and $X(\BQ)\neq\varnothing$. We consider $X$ as a closed subvariety of $\BA^n_\BQ$ via $X\cong \iota(G)\cdot v_0\subset\BA^n_\BQ$. We recall from $\S$\ref{se:normalisedTamagawameasure} that $X$ is equipped with a normalised Tamagawa measure $\m^X=\m_{\infty}^X\times\m_f^X$. The non-compactness of every connected component $B_\infty\subset X(\BR)$ and the unboundness of $\m_{\infty}^X(B_\infty(T)),T\to\infty$ follow from e.g. \cite[Lemma 2.2 \& (2.4)]{Browning-Gorodnik}.  

 In \cite[\S5]{Borovoi-Rudnick} Borovoi and Rudnick introduced a locally constant function
 \begin{equation}\label{eq:deltaX}
 	\delta_{X}: X(\RA)\to \{0,\#(C(H))\},
 \end{equation}
 where $C(H)$ is the dual group of $\Pic(H)$ \emph{à la Kottwitz} (cf. \cite[\S3.4]{Borovoi-Rudnick}). This function is an indicator of adelic orbits containing rational points. That is, for every adelic orbit $\orbit$ of $X(\RA)$ under $G(\RA)$, (cf. \cite[Theorem 3.6]{Borovoi-Rudnick})
 $$\delta_{X}|_{\orbit}>0 \Leftrightarrow \orbit\cap X(\BQ)\neq\varnothing.$$
 Based on works \cite{Duke-Rudnick-Sarnak,EM,EMS} \emph{et al}, Borovoi and Rudnick show that
 \begin{theorem}[\cite{Borovoi-Rudnick} Theorems 5.3 \& 5.4]\label{thm:symmetricHL}
 	Symmetric spaces are Hardy-Littlewood with density $\delta_{X}$ \eqref{eq:deltaX}. They are strongly Hardy-Littlewood provided $C(H)=0$.
 \end{theorem}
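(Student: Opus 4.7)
The strategy is to decompose the adelic space into $G(\RA)$-orbits, count rational points in each orbit via known equidistribution theorems, and then identify the arithmetic leading constants with the values of $\delta_X$. Write $X(\RA) = \coprod_{j \in J} \orbit_j$ for its (finitely many) $G(\RA)$-orbits. Each $\orbit_j$ is open and closed in $X(\RA)$, and the Tamagawa measure $\m^X$ restricts to a $G(\RA)$-invariant measure on each piece. For an adelic neighbourhood $B = B_\infty \times B_f$ with $B_\infty \subset X(\BR)$ a fixed connected component, the counting function splits as
\begin{equation*}
N_X(B; T) = \sum_{j \in J} N_X\bigl(B \cap \orbit_j; T\bigr).
\end{equation*}
By the cited result of Borovoi--Rudnick, $\orbit_j \cap X(\BQ) \ne \varnothing$ if and only if $\delta_X|_{\orbit_j} > 0$, so orbits with $\delta_X|_{\orbit_j} = 0$ contribute nothing to either side of \eqref{eq:relHL}.

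For each $\orbit_j$ meeting $X(\BQ)$, fix $x_j \in \orbit_j \cap X(\BQ)$ and let $H_j := \Stab_G(x_j)$; this is a $\BQ$-form of $H$, connected reductive with no $\BQ$-characters. Strong approximation for the simply connected group $G$ implies that $\orbit_j \cap X(\BQ)$ is a single $G(\BQ)$-orbit $G(\BQ) \cdot x_j$, so counting rational points in $B \cap \orbit_j$ of norm at most $T$ reduces to counting cosets $\gamma \in G(\BQ)/H_j(\BQ)$ with $\gamma \cdot x_j$ in the corresponding translated region. This is precisely a lattice-point counting problem on the affine symmetric variety $G/H_j$. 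The equidistribution theorems of Duke--Rudnick--Sarnak, Eskin--McMullen, and Eskin--Mozes--Shah then furnish
\begin{equation*}
N_X\bigl(B \cap \orbit_j; T\bigr) \sim c_j \cdot \m_\infty^X\bigl(B_\infty(T)\bigr) \cdot \m_f^X\bigl(B_f \cap \orbit_j\bigr), \quad T \to \infty,
\end{equation*}
where $c_j$ is an explicit constant expressible as the ratio $\tau(G)/\tau(H_j)$ of Tamagawa numbers, and where the archimedean volume diverges.

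The main obstacle is matching $c_j$ with the value of $\delta_X$ on $\orbit_j$. Since $G$ is semisimple simply connected, $\tau(G) = 1$; the stabilizers $H_j$ are pure inner forms of $H$ parametrised by classes in $H^1(\BQ, H)$ that are locally trivial at every place (those giving rise to orbits with rational points). Using Kottwitz's duality identifying the relevant local-global obstruction group with $\Pic(H)^{\vee} = C(H)$, together with Ono's formula for Tamagawa numbers of inner forms of reductive groups, one verifies that $c_j = \#C(H)$ uniformly over all orbits $\orbit_j$ meeting $X(\BQ)$, which is exactly the constant value of $\delta_X$ on each such orbit. Combining this with Lemma \ref{le:factorisationdelta}(i) and summing over $j$ yields
\begin{equation*}
N_X(B; T) \sim \#C(H) \sum_{j : \delta_X|_{\orbit_j}>0} \m_\infty^X\bigl(B_\infty(T)\bigr) \cdot \m_f^X\bigl(B_f \cap \orbit_j\bigr) = \int_{B_\infty(T) \times B_f} \delta_X \, \operatorname{d}\m^X,
\end{equation*}
which is \eqref{eq:relHL}. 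For the second statement, if $C(H)$ is trivial then Borovoi's cohomological criterion implies that every adelic orbit meets $X(\BQ)$, hence $\delta_X \equiv 1$ and $X$ is strongly Hardy-Littlewood.
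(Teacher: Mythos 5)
The theorem is stated in the paper purely as a citation of Borovoi--Rudnick (Theorems 5.3 and 5.4), so there is no in-paper proof to compare against; I am therefore evaluating your sketch against the actual Borovoi--Rudnick argument. Your overall strategy --- decompose $X(\RA)$ into $G(\RA)$-orbits, apply the equidistribution theorems of Duke--Rudnick--Sarnak, Eskin--McMullen, and Eskin--Mozes--Shah to each orbit, then identify the leading constants with the values of $\delta_X$ --- is indeed the outline Borovoi--Rudnick follow. However, there are two closely linked gaps.

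First, the assertion that strong approximation for $G$ implies $\orbit_j \cap X(\BQ)$ is a \emph{single} $G(\BQ)$-orbit is not correct in general. Strong approximation gives density of $G(\BQ)$ in $G(\RA_f)$, nothing more; the $G(\BQ)$-orbits lying in a fixed adelic $G(\RA)$-orbit are parametrised, via the Galois-cohomological description of $X(\BQ)/G(\BQ)$, by a fibre of the localisation map $\ker\bigl(H^1(\BQ,H) \to H^1(\BQ,G)\bigr) \to \prod_v \ker\bigl(H^1(\BQ_v,H) \to H^1(\BQ_v,G)\bigr)$, and this fibre can have several elements (it is controlled by a Tate--Shafarevich-type set of $H$). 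Second, the identity you assert, $c_j = \tau(G)/\tau(H_j) = \#C(H)$, cannot hold orbit by orbit: since $\tau(G)=1$ it would force $\tau(H_j) = 1/\#C(H)$, but Ono's formula (as extended by Sansuc and Kottwitz) gives $\tau(H) = |\Pic(H)|/|\Sha(H)| = \#C(H)/|\Sha(H)|$, which equals $1/\#C(H)$ only if $|\Sha(H)| = \#C(H)^2$. For a one-dimensional anisotropic torus $H$ (the stabiliser in the $n=3$ case) one has $\#C(H)=2$, $\Sha(H)=0$, and $\tau(H)=2 \ne 1/2$, so the asserted identity fails. The correct argument in Borovoi--Rudnick sums the contributions $1/\tau(H_{y_i})$ over all $G(\BQ)$-orbit representatives $y_i$ inside $\orbit_j$; it is this \emph{sum} that equals $\#C(H)$ after invoking Ono's formula together with the duality between $\Pic(H)$ and $C(H)$. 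Your sketch collapses the sum to a single term and then asserts a Tamagawa-number identity that is false termwise, so the matching step is not established.
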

  \begin{remark*}
 	The density function $\delta_X$ \eqref{eq:deltaX} can equivalently be defined in terms of the orthogonal locus of the Brauer group $\operatorname{Br}(X)$ (cf. \cite[Propositions 2.2 and 2.10]{CT-Xu}). 
 	See the work of Wei--Xu \cite{Wei-Xu} for various explicit formulas in this spirit.
 \end{remark*}
 By convention, we say $X$ is a \emph{nice symmetric space} if either 
 \begin{enumerate}
 	\item $G$ is $\BQ$-simple;\\ or
 	\item $G\cong \SL_{2,\BQ}\times\SL_{2,\BQ}$ and $H$ is the diagonal subgroup.\footnote{The case (2) corresponds to affine quadrics in four variables of signature $(2,2)$.}
 \end{enumerate}

\medskip
For homogeneous spaces of type (I) or (II), works \cite{Nevo-Sarnak,GW,Browning-Gorodnik} provide explicit uniform error term estimates for counting integral points with congruence conditions. Based on them, we are now in a position to show:
\begin{theorem}\label{thm:Browning-Gorodnikequidistribution}
Nice algebraic groups and nice homogeneous spaces all satisfy the hypothesis (i) of Theorem \ref{thm:HLtwistintegral} with respect to an arbitrary affine integral model over $\BZ$ (inside of $\BA^{n^2}_\BZ$ or $\BA^n_{\BZ}$ depending on the type).
\end{theorem}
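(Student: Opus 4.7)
The plan is to reduce the effective equidistribution with congruence conditions to the quantitative lattice-point counting results established in \cite{Nevo-Sarnak, GW, Browning-Gorodnik}. Fix an arbitrary affine integral model $\CX \subset \BA^N_\BZ$ and let $a \in \BN_{\neq 0}$ be sufficiently divisible so that $\CX_a$ is defined. Since $\phi_{a,\BQ}$ is a $\BQ$-isomorphism $X \iso X_a$, pulling back by $\phi_a^{-1}$ identifies any congruence neighbourhood $B_f^{\CX_a}(\xi, l) \subset \CX_a(\widehat{\BZ})$ with a finite union of congruence neighbourhoods of $\CX$ of level dividing a fixed power of $la$. It therefore suffices to establish \eqref{eq:uniformlyHLintegra2} for $a = 1$ with exponents $(\sigma_\CX, \beta_\CX)$; the estimate for arbitrary $a$ then follows with $\beta_{\CX_a} := \beta_\CX$ and a slightly larger $\sigma_{\CX_a}$ absorbing the fixed power of $a$.

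For the Type I case, the strategy is to upgrade the proof of Theorem \ref{thm:nicealgrps} to an effective statement. Via strong approximation and \eqref{eq:BCGKf}, the counting $N_G(G(\BR) \times B_f^\CG(\xi, l); T)$ equals $\#\{\gamma \in \Gamma(l) \cdot x_0 : \|\gamma\| \leqslant T\}$ for a fixed $x_0 \in \CG(\BZ)$ congruent to each $\xi_p$ modulo $p^{v_p(l)}$. The effective equidistribution of lattice orbits proved in \cite[\S6-\S7]{GW} (by duality and the spectral gap) upgrades \eqref{eq:latticeptscounting} to an error term
\[
O\bigl(\m_\infty^G(\{x \in G(\BR) : \|x\| \leqslant T\})^{1-\beta}\bigr),
\]
where $\beta > 0$ depends only on $G$ and the implied constant is bounded polynomially in the covolume $\m_\infty^{\Gamma(l)}(G(\BR)/\Gamma(l))^{-1}$. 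Combined with \eqref{eq:volumelattice} and the Lang-Weil bound $[\Gamma(1) : \Gamma(l)] = O(l^{\dim G})$, this yields \eqref{eq:uniformlyHLintegra2} with $\sigma_\CX \leqslant \dim(G)$ and $\beta_\CX = \beta$, uniformly in $\xi$.

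For the Type II case, the analogous strategy applies after decomposing $X(\RA) = \coprod \orbit$ into adelic orbits under $G(\RA)$ (cf.~\cite[\S3]{Borovoi-Rudnick}). Each orbit containing rational points contributes a lattice-point count on the corresponding real homogeneous space, to which one applies the effective equidistribution of \cite[\S2]{Browning-Gorodnik} when $G$ is $\BQ$-simple, and the analogous classical result for $G \cong \SL_{2,\BQ} \times \SL_{2,\BQ}$ with $H$ the diagonal subgroup (obtained via Eisenstein series together with subconvexity bounds towards Ramanujan on $\GL_2$, as in \cite[\S3]{Nevo-Sarnak}) in the remaining exceptional case. Both give an error of the required polynomial-in-$l$ form uniformly in $\xi$, and summing the resulting main terms over orbits reassembles the integral against the Borovoi-Rudnick density $\delta_X$ from \eqref{eq:deltaX}.

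The main obstacle is ensuring \emph{simultaneous} uniformity in $\xi$ and $l$. Uniformity in $l$ is the substantive content of the cited effective equidistribution theorems, whose error bounds depend on the congruence subgroup only through a polynomial factor in its covolume. Uniformity in $\xi$ (for fixed $l$) follows from the $G(\RA_f)$-invariance of the Tamagawa measure together with transitivity of the $G(\RA_f)$-action on congruence cosets, which ensure that the equidistribution statement transfers from one coset to any other with the same error bound. A subsidiary point requiring care in Type II is to verify that the spectral-gap input is uniform across the finitely many adelic orbits at play; this is implicit in the cited references, but would be worth making explicit.
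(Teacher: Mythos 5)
Your overall strategy matches the paper's: translate congruence neighbourhoods into explicit congruence conditions on integral points and invoke the effective lattice-point counting results of Nevo--Sarnak, Gorodnik--Nevo, and Browning--Gorodnik, which are uniform in both the level $l$ and the residue $\xi$. The core citation-based argument is sound.

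However, your preliminary reduction to the case $a=1$ has a real gap, and it is where your argument diverges from the paper's. You claim that $\phi_a^{-1}$ identifies a congruence neighbourhood $B_f^{\CX_a}(\xi,l)\subset \CX_a(\widehat{\BZ})$ with a finite union of congruence neighbourhoods of $\CX$, to which the $a=1$ version of \eqref{eq:uniformlyHLintegra2} would apply. But for $p\mid a$, one has $\phi_a^{-1}(\CX_a(\BZ_p))=\{\xx\in X(\BQ_p): a\xx\in\BZ_p^n\}\supsetneq\CX(\BZ_p)$, so the pulled-back set is generally \emph{not} contained in $\CX(\widehat{\BZ})$. Since hypothesis (i) only concerns congruence neighbourhoods inside $\CX_a(\widehat{\BZ})$, the $a=1$ version provides no information about the pulled-back pieces at primes dividing $a$. (The inclusion in fact goes the other way: $\phi_a(\CX(\BZ_p))\subset\CX_a(\BZ_p)$.) Moreover $\CX(\widehat{\BZ})$ may be empty, in which case the $a=1$ statement is vacuous while the $a>1$ statements are precisely the ones that matter. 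The paper avoids this entirely by applying the cited theorems \emph{directly} to each $\CX_a$ — the results of Nevo--Sarnak, Gorodnik--Nevo, and Browning--Gorodnik hold for an arbitrary lattice/integral model, so there is nothing to gain from reducing to $a=1$. You should drop the reduction and argue as in the paper.

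Two smaller remarks. First, for Type II the paper simply cites \cite[Corollaries 2.5 \& 2.6]{Browning-Gorodnik}, which already cover both the $\BQ$-simple case and $G\cong\SL_{2,\BQ}\times\SL_{2,\BQ}$ with diagonal $H$; your separate invocation of Eisenstein series and $\GL_2$ subconvexity for the latter case re-derives rather than uses the cited result. Second, note the paper's own citation slip in \S\ref{se:strategy} (``Gorodnik--Nevo \cite[\S6 \& \S7]{GW}'', where $\mathrm{GW}$ is Gorodnik--\emph{Weiss}); in the proof it is Gorodnik--Nevo \cite[Theorem 6.1]{Gorodnik-Nevo} that is invoked.
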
\begin{proof}
Let $X$ be such a homogeneous space. Fix $\CX$ any affine integral model of $X$ over $\BZ$ as above and consider for any $a\in\BN_{\neq 0}$ the integral model $\CX_a$ of $X_a$ over $\BZ$ (cf. \eqref{eq:CXaaffine}, which is also an integral model of $X$ since $X_a\cong_\BQ X$). As $X(\BQ)\neq\varnothing$, then $\CX_a(\BZ)\neq\varnothing$ if $a$ is sufficiently divisible (depending on $\CX$). Now fix any such $\CX_a$. 

Now we fix $B_f^{\CX_a}(\xi,l)$ (cf. \eqref{eq:BfX}) a congruence neighbourhood of $\CX_{a}$, where $l\geqslant 2$ is an integer and $\xi\in\prod_{p\mid l}\CX_a(\BZ_p)$. Let $\overline{\xi}=(\overline{\xi_p})_{p\mid l}$ be the image of $\xi$ in $\CX_a(\BZ/l)\cong\prod_{p\mid l}\CX_{a}(\BZ/p^{v_p(l)})$ under the residue map $\Psi_l$ \eqref{eq:defPsi}. Since $\CX_a$ is affine, each factor $B_p^{\CX_a}(\xi,l)$ of $B_f^{\CX_a}(\xi,l)$ can equally be defined by means of the congruence conditions in \eqref{eq:congball}. 

Building on the works of Nevo-Sarnak \cite[Theorem 3.2]{Nevo-Sarnak} and Gorodnik-Nevo \cite[Theorem 6.1]{Gorodnik-Nevo} for Type I varieties (using \eqref{eq:BCGKf} \eqref{eq:volumelattice} as in the proof of Theorem \ref{thm:nicealgrps}), and Browning-Gorodnik \cite[Corollaries 2.5 \& 2.6]{Browning-Gorodnik} for Type II varieties, there exist $\sigma_{\CX_a}>0,0<\beta_{\CX_a}<1$ such that, for any connected component $B_\infty\subset X(\BR)$, we have, uniformly for any such $(\xi,l)$,
\begin{align*}
	N_{X_a}(\phi_{a,\BR}(B_{\infty})\times B_f^{\CX_a}(\xi,l);T)&=\#\{\XX\in \CX_a(\BZ)\cap \phi_{a,\BR}(B_{\infty}):\|\XX\|\leqslant T, \text{ for any } p\mid l,\XX\equiv \overline{\xi_p}\mod p^{v_p(l)}\}\\ &=\#\{\XX\in \CX_a(\BZ)\cap \phi_{a,\BR}(B_{\infty}):\|\XX\|\leqslant T, \XX\equiv \overline{\xi}\mod l\}\\ &=\int_{\phi_{a,\BR}(B_\infty)(T)\times B_f^{\CX_a}(\xi,l)}\delta_{X_a} \operatorname{d}\m^{X_a}+O(l^{\sigma_{\CX_a}}\m_{\infty}^{X_a}(\phi_{a,\BR}(B_{\infty})(T))^{1-\beta_{\CX_a}})\\ &=\int_{\phi_{a,\BR}(B_\infty)(T)\times B_f^{\CX_a}(\xi,l)}\delta_{X_a} \operatorname{d}\m^{X_a}+O(l^{\sigma_{\CX_a}}\m_{\infty}^X(B_\infty(T))^{1-\beta_{\CX_a}}).
\end{align*}
Therefore this confirms the hypothesis (i) of Theorem \ref{thm:HLtwistintegral} for the integral model $\CX$. 
\end{proof}
\begin{corollary}\label{co:APHLforhomogeneousspaces}
 Assume that a homogeneous space $X=G/H$ in Theorem \ref{thm:Browning-Gorodnikequidistribution} satisfies the hypothesis (ii) of Theorem \ref{thm:HLtwistintegral} with respect to a fixed affine integral model (inside of $\BA^{n^2}_\BZ$ or $\BA^n_{\BZ}$ depending on the type). Then $X$ satisfies \APHL\ with density $\delta_{X}$ \eqref{eq:deltaX}. If moreover $H$ is simply connected, then $X$ satisfies \APSHL.
\end{corollary}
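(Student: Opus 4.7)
The plan is to reduce the corollary to a direct application of Theorem~\ref{thm:HLtwistintegral}, whose two hypotheses I would verify in turn for the rescaled integral models $\CX_a$ attached to the given $\CX$. By Theorems~\ref{thm:nicealgrps} and~\ref{thm:symmetricHL} the variety $X$ is already known to be Hardy--Littlewood with density $\delta_X$, and its affine realization $X\cong\iota(G)\cdot v_0$ provides a rational point, so $X(\BQ)\neq\varnothing$ and all the global assumptions of Theorem~\ref{thm:HLtwistintegral} are in place.

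Hypothesis (i), the strong effective equidistribution condition, holds for every affine integral model of $X$ by Theorem~\ref{thm:Browning-Gorodnikequidistribution}; applying that theorem to $\CX_a$ for each sufficiently divisible $a\in\BN_{\neq 0}$ supplies exponents $\sigma_{\CX_a},\beta_{\CX_a}$ making~\eqref{eq:uniformlyHLintegra2} hold uniformly in $(\xi,l)$, with no extra work needed.

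The substantive step is the transfer of hypothesis (ii), the geometric sieve condition, from $\CX$ to $\CX_a$. Since $\phi_a$ induces an isomorphism $\CX_a\times_{\BZ}\BZ[1/a]\cong\CX\times_{\BZ}\BZ[1/a]$ carrying $\CZ_a$ to $\CZ$, for any prime $p\nmid a$ the reductions $\CX_a(\BF_p)$ and $\CZ_a(\BF_p)$ are in bijection with $\CX(\BF_p)$ and $\CZ(\BF_p)$, while the archimedean rescaling $\phi_{a,\BR}$ only multiplies $B_\infty(T)$ by the constant $a$. Hence, for $M$ exceeding all prime factors of $a$, the sieve quantity appearing in~\eqref{eq:geomsieve} for $\CX_a$ is bounded by the analogous quantity for $\CX$ up to an absolute constant; the finitely many primes $p\mid a$ (if not already excluded by $M$) can be absorbed into $f_1(M)$ via the single-prime estimate of Corollary~\ref{co:uppercodim2}. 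The assumed condition (ii) for $\CX$ therefore yields condition (ii) for $\CX_a$, and Theorem~\ref{thm:HLtwistintegral} delivers \APHL\ for $X$ with density $\delta_X$. This transfer is the part that most needs care, as the sets $\CX(\BZ)$ and $\CX_a(\BZ)$ are genuinely different even though their $\BZ_p$-points agree for $p\nmid a$.

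For the final clause, note that when $H$ is simply connected semisimple one has $\Pic(H_{\overline{\BQ}})=0$, so Kottwitz's dual group $C(H)$ is trivial; by Theorem~\ref{thm:symmetricHL} (or Theorem~\ref{thm:nicealgrps} in the Type~I case where $H$ is trivial) this promotes ``Hardy--Littlewood'' to ``strongly Hardy--Littlewood'', i.e.\ $\delta_X\equiv 1$. The first remark following Theorem~\ref{thm:HLtwistintegral} then upgrades \APHL\ to \APSHL, completing the proof.
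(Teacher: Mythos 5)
Your high-level plan — apply Theorem~\ref{thm:HLtwistintegral}, getting (i) from Theorem~\ref{thm:Browning-Gorodnikequidistribution} and (ii) from the hypothesis — is the same as the paper's, and the observation that $H$ simply connected forces $\Pic(H)=0$, hence $C(H)=0$, hence $\delta_X\equiv 1$, is correct. But the ``substantive step'' you flag, the transfer of hypothesis~(ii) from $\CX$ to $\CX_a$, is both unnecessary and, as you have written it, actually has a gap that your argument does not close.

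It is unnecessary because hypothesis~(ii) of Theorem~\ref{thm:HLtwistintegral} is not a statement about $\CX$ alone: it is explicitly formulated for the whole rescaled family, saying that for every sufficiently divisible $a$ the model $\CX_a$ satisfies the sieve bound~\eqref{eq:geomsieve}, with $\CZ_a\subset\CX_a$. When the corollary says ``$X$ satisfies hypothesis~(ii) with respect to $\CX$'' it means exactly this family statement, and the paper's proof simply records it as the existence of an $a_1$ such that every $\CX_a$ with $a_1\mid a$ satisfies~(ii). Combined with the $a_2$ supplied by Theorem~\ref{thm:Browning-Gorodnikequidistribution} for~(i), one takes $a_1a_2\mid a$ and applies Theorem~\ref{thm:HLtwistintegral} directly — there is no transfer to perform.

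The transfer you attempt would also be genuinely incorrect. You claim that for $M$ past the primes dividing $a$, the sieve count for $\CX_a$ is dominated by that for $\CX$ ``up to an absolute constant''. But $\phi_a:\xx\mapsto a\xx$ maps $\CX(\BZ)$ into $\CX_a(\BZ)\cap a\BZ^n$, and $\CX_a(\BZ)$ is in general strictly larger: it consists of the $\YY\in\BZ^n$ with $\YY/a\in X(\BQ)$, i.e.\ rational points of $X$ whose denominators merely divide $a$. The points of $\CX_a(\BZ)\setminus\phi_a(\CX(\BZ))$ are not counted by the sieve bound for $\CX$, so no such domination follows, even after discarding the small primes. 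You yourself note that ``$\CX(\BZ)$ and $\CX_a(\BZ)$ are genuinely different'' but do not resolve the resulting mismatch; and Corollary~\ref{co:uppercodim2}, which controls the contribution of a single residue class modulo a single large prime to the count on a \emph{fixed} model, does not bridge the gap between two different sets of integral points. Under the paper's reading of the hypothesis this issue never arises, which is why the paper's proof is two lines.
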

\begin{proof}
It follows directly from Theorems \ref{thm:HLtwistintegral}, \ref{thm:nicealgrps}, \ref{thm:symmetricHL} that $X$ is Hardy-Littlewood with density $\delta_{X}$. Now let $\CX$ be an affine integral model of $X$, and let $a_1\in\BN_{\neq 0}$ (depending on $\CX$) be such that every integral model $\CX_{a}$ with $a_1\mid a$ satisfies the hypothesis (ii) of Theorem \ref{thm:HLtwistintegral}. By Theorem \ref{thm:Browning-Gorodnikequidistribution}, there exists $a_2\in\BN_{\neq 0}$ (depending on $\CX$) such that every $\CX_{a}$ with $a_2\mid a$ satisfies the hypothesis (i) of Theorem \ref{thm:HLtwistintegral}. Therefore, $\CX_{a}$ satisfies both hypotheses whenever $a_1a_2\mid a$, and hence $X$ satisfies \APHL\ with density $\delta_{X}$ by Theorem \ref{thm:HLtwistintegral}. Moreover, \cite[Theorem 0.3, Corollary 0.3.3]{Borovoi-Rudnick} shows that $X$ is strongly Hardy-Littlewood, provided that $H$ is simply connected.
\end{proof}
\begin{remark}\label{rmk:conj}
	The works \cite[Theorems 1.1, 1.3]{CLX}, \cite[Theorems 1.4, 1.6, 1.7]{Cao-Huang} confirm that many nice affine homogeneous spaces satisfy \textbf{(APSA)}. In light of Theorem \ref{thm:Browning-Gorodnikequidistribution}, we conjecture that they should also satisfy \textbf{(APHL)}, and by Corollary \ref{co:APHLforhomogeneousspaces}, it remains to prove the geometric sieve condition with respect to a fixed integral model for them.
\end{remark}
\subsection{Affine quadrics}\label{se:affinequad}
Returning to the affine quadric $\BFQ$ \eqref{eq:integralmodelaffinequadric}.
We recall that, on fixing $P\in \BFQ(\BQ)$, $\BFQ$ is a symmetric space under $G=\Sp_q$ with stabilizer $H\cong \Sp_q|_{P^\perp}$. 
The group $\Sp_{q}$ is always $\BQ$-simple if $n\geqslant 5$ or $n=3$, whereas when $n=4$, it can happen that $\Sp_{q}\cong \SL_{2,\BQ}\times \SL_{2,\BQ}$, which is the only exceptional case where $\Sp_{q}$ can possibly be not $\BQ$-simple (\cite[Remark 2.4]{Browning-Gorodnik}). In any case, affine quadrics with $n\geqslant 3$ variables are nice symmetric spaces.
	
	Admitting Theorem \ref{thm:geomsieve}, we can now prove our main theorems stated in the introduction.
\begin{proof}[Proof of Theorems \ref{thm:ngeq4} and \ref{thm:n=3}]
	
	Recall that we assume the form $q(\xx)$ defining the affine quadric $\BFQ$ \eqref{eq:affinequadric} to be $\BR$-isotropic. The real locus $\BFQ(\BR)$ of $\BFQ$ is a hyperboloid. It is connected if $n\geqslant 4$, and it can be one sheeted or two sheeted when $n=3$ (depending on $H$).
	It follows from \eqref{eq:globalgrowthCQ} that for any connected component $B_\infty\subset \BFQ(\BR)$, we have
	$$\m_\infty^{\BFQ}(B_\infty(T))\asymp T^{n-2}.$$
	
On the other hand, we take an integral model $\CQ\subset\BA^n_\BZ$ of the form $q(\xx)=m$ (cf. \eqref{eq:integralmodelaffinequadric}) satisfying the hypotheses of Theorems \ref{thm:ngeq4} and \ref{thm:n=3}. Then for any $a\in\BN_{\neq 0}$, on recalling \eqref{eq:CXaaffine}, the twisted integral model $\CQ_a\subset\BA^n_\BZ$ is defined by the equation $$q(\xx)=a^2m.$$ In particular $-a^2m\det q\neq\square$ and hence the hypothesis of Theorem \ref{thm:n=3} is always satisfied for any $a\in\BN_{\neq 0}$. Then Theorem \ref{thm:geomsieve} implies that the hypothesis (ii) of Theorem \ref{thm:HLtwistintegral} holds for all $\CQ_a$ with $$f_1(x)=\frac{1}{x},\quad f_2(x)=\frac{1}{\sqrt{\log x}}.$$

 Therefore the statements of Theorems \ref{thm:ngeq4} and \ref{thm:n=3} follow from Corollary \ref{co:APHLforhomogeneousspaces}.
\end{proof}

\section{The geometric sieve for affine quadrics}\label{se:errorterms}

In this whole section, we are devoted to proving Theorem \ref{thm:geomsieve}. For this purpose, let us fix for the rest of this section the integral model $\CQ$ \eqref{eq:integralmodelaffinequadric} over $\BZ$ for $\BFQ$, and a closed subset $Z\subset\BFQ$ of codimension at least two. Let $\CZ:=\overline{Z}\subset \CQ$.
For any $0<N_1<N_2\leqslant \infty$ sufficiently large, let us define
\begin{equation}\label{eq:VN}
V(T;N_1,N_2):=\#\{\XX\in \CQ(\BZ):\|\XX\|\leqslant T,\exists p\in [N_1,N_2],\XX\mod p\in \CZ(\BF_p)\}.
\end{equation}
With this notation, for any $M>0$, we have $$\#\{\XX\in \CQ(\BZ):\|\XX\|\leqslant T,\exists p\geqslant M,\XX\mod p\in \CZ(\BF_p)\}=V(T;M,\infty).$$
According to the range of $N_1,N_2$, we separate our discussion into three parts, in which \eqref{eq:VN} is treated using different methods. In \S\ref{se:er:primeextpoly} we derive an upper bound for \eqref{eq:VN} valid for arbitrary $N_1,N_2$, which is satisfactory if we take $N_1$ growing to infinity as $T\to\infty$ and $N_2=T^\alpha$ with $\alpha>0$ sufficiently small.
In \S\ref{se:er:primeinter}, on taking $N_1=T^\alpha,N_2=T$, we deal with the residues coming from intermediate primes, by employing various Serre-type uniform bounds for integral points on quadrics. In \S\ref{se:er:geometricsieve} we match together a generalised geometric sieve and our half-dimensional sieve for affine quadrics so as to derive a satisfactory upper bound for the case $N_1=T,N_2=\infty$.
In $\S$\ref{se:proofofgeomsieve} we assemble these bounds together and prove Theorem \ref{thm:geomsieve}.

We shall always throughout assume that $-m\det q\neq\square$  if $n=3$. 
And we shall emphasise uniformity of dependence in the assumption of all statements. Unless expressly stated to the contrary, all implied constants are only allowed to depend on $\mathcal{Q}$ and $Z$.

\subsection{Extension to prime moduli of polynomial growth}\label{se:er:primeextpoly}
The purpose of section is to prove:
\begin{theorem}\label{thm:primepoly}
	For any $1\ll N_1<N_2<\infty$, we have
	\begin{align*}
		V(T;N_1,N_2)=O\left(\frac{T^{n-2}}{N_1^{\codim_{\BFQ}(Z)-1}\log N_1}+T^{(n-2)(1-\beta_{\BFQ})}N_2^{\frac{(3n-2)(n-1)}{2}+\dim Z+1}\right).
	\end{align*}
\end{theorem}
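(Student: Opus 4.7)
The plan is to reduce the bound on $V(T;N_1,N_2)$ to a sum over primes $p \in [N_1,N_2]$ of the single-prime estimate furnished by Corollary \ref{co:uppercodim2}. By a trivial union bound,
\begin{equation*}
V(T;N_1,N_2) \leqslant \sum_{p \in [N_1,N_2]} \#\{\XX \in \CQ(\BZ) : \|\XX\| \leqslant T,\ \XX \bmod p \in \CZ\},
\end{equation*}
and summing over the (finitely many) connected components $B_\infty \subset \BFQ(\BR)$, the right-hand side is a sum of terms of the form $N_\BFQ(B_\infty \times B_{f,0}^{\CQ}(\CZ(\BF_p),p);T)$, taking $l_0$ any fixed integer so that $\CQ, \CZ$ are smooth with geometrically integral fibres over $\BZ[1/l_0]$ and $B_0 = \prod_{p \mid l_0}\CQ(\BZ_p)$ (which is compact open). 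Since we may assume $N_1$ exceeds any fixed absolute constant, in particular $p \geqslant N_1$ is large enough for Corollary \ref{co:uppercodim2} to apply to each summand.

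The crucial input is the strong effective equidistribution condition (hypothesis (i) of Theorem \ref{thm:HLtwistintegral}) for the integral model $\CQ$, which was established by Theorem \ref{thm:Browning-Gorodnikequidistribution}, building on the uniform error terms of Browning--Gorodnik \cite[Corollaries 2.5 \& 2.6]{Browning-Gorodnik}. Tracking the exponent arising from their proof yields the explicit value $\sigma_\CQ = \frac{(3n-2)(n-1)}{2}$. Plugging this into Corollary \ref{co:uppercodim2} and using $\m^{\BFQ}_\infty(B_\infty(T)) \asymp T^{n-2}$ from \eqref{eq:globalgrowthCQ}, we obtain, uniformly in $p$,
\begin{equation*}
N_\BFQ(B_\infty \times B_{f,0}^{\CQ}(\CZ(\BF_p),p);T) \ll \frac{T^{n-2}}{p^{\codim_\BFQ(Z)}} + p^{\sigma_\CQ + \dim Z}\, T^{(n-2)(1-\beta_\BFQ)}.
\end{equation*}

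The final step is to sum the two contributions over $p \in [N_1,N_2]$. Since $\codim_\BFQ(Z) \geqslant 2$, the first term gives
\begin{equation*}
T^{n-2} \sum_{p \geqslant N_1} \frac{1}{p^{\codim_\BFQ(Z)}} \ll \frac{T^{n-2}}{N_1^{\codim_\BFQ(Z) - 1}},
\end{equation*}
by comparison with the tail of $\zeta(s)$. For the second term, crudely bounding the number of primes $\leqslant N_2$ by $N_2$ and each term by $N_2^{\sigma_\CQ + \dim Z}$ yields $N_2^{\sigma_\CQ + \dim Z + 1} \, T^{(n-2)(1-\beta_\BFQ)}$. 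Adding these two contributions gives exactly the claimed bound. The only genuine work is the bookkeeping behind the value $\sigma_\CQ = \frac{(3n-2)(n-1)}{2}$; everything else is a formal combination of the prior effective equidistribution material. The resulting estimate is useful only in the regime where $N_2$ is a sufficiently small power of $T$, so that the second error term remains of size $o(T^{n-2})$, which is precisely the range $N_2 \leqslant T^{\alpha}$ that the remaining sections will exploit.
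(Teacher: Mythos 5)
Your proof is correct and follows essentially the same route as the paper: a union bound over primes in $[N_1,N_2]$, Corollary \ref{co:uppercodim2} with $\sigma_\CQ = \dim H + 2\dim G = \frac{(3n-2)(n-1)}{2}$ coming from Theorem \ref{thm:Browning-Gorodnikequidistribution} and Browning--Gorodnik, then the two tail estimates $\sum_{p\geqslant N_1}p^{-\codim_\BFQ(Z)} \ll N_1^{1-\codim_\BFQ(Z)}$ and the crude $O(N_2)$ count of primes up to $N_2$.
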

\begin{proof}
	Recall from \S\ref{se:affinequad} that we have $\BFQ\cong G/H$, with $$\dim G=\frac{n(n-1)}{2},\quad \dim H=\frac{(n-1)(n-2)}{2}.$$
	
	Let us fix $l_0\geqslant 2$ such that $\CQ$ is smooth over $\BZ[1/l_0]$.
	Thanks to Theorem \ref{thm:Browning-Gorodnikequidistribution}, the hypothesis of Proposition \ref{prop:HLintegralforsubset} is fulfilled  for $B_0:=\prod_{p\mid l_0}\CQ(\BZ_p)$ with exponent (cf. \cite[Corollary 2.6]{Browning-Gorodnik}) $$\sigma_{\CQ}=\dim H+2\dim G=\frac{(3n-2)(n-1)}{2}.$$ Applying Corollary \ref{co:uppercodim2}, we conclude that for any connected component $B_\infty\subset\BFQ(\BR)$, any $p_0$ sufficiently large with $p_0\nmid l_0$,
	\begin{align*}
			N_{\BFQ}(B_{\infty}\times B_{f,0}^{\CQ}(\CZ(\BF_{p_0}),p_0);T)&\ll \frac{\m_{\infty}^{\BFQ}(B_{\infty}(T))}{p_0^{\codim_{\BFQ}(Z)}}+\m_{\infty}^{\BFQ}(B_{\infty}(T))^{1-\beta_\CQ}p_0^{\sigma_\CX+\dim Z}\\ &=O\left(\frac{T^{n-2}}{p_0^{\codim_{\BFQ}(Z)}}+T^{(n-2)(1-\beta_\CQ)}p_0^{\frac{(3n-2)(n-1)}{2}+\dim Z}\right).
	\end{align*}

	For any $p$, let us now consider 
	\begin{equation}\label{eq:NpT}
	V_p(T;\CZ):=\#\{\XX\in \mathcal{Q}(\BZ):\|\XX\|\leqslant T,\XX\mod p\in \CZ(\BF_p)\}.
	\end{equation}
	Then for $N_1$ sufficiently large, summing $V_{p_0}(T;\CZ)$ over all such $p_0$'s lying in $[N_1,N_2]$, we get the following upper bound for $V(T;N_1,N_2)$:
	\begin{align*}
			V(T;N_1,N_2)&\leqslant \sum_{N_1\leqslant p_0\leqslant N_2}V_{p_0}(T;Z)\\ &=\sum_{N_1\leqslant p_0\leqslant N_2}\sum_{B_\infty\subset \BFQ(\BR)}N_{\BFQ}(B_{\infty}\times B_{f,0}^{\CQ}(\CZ(\BF_{p_0}),p_0);T)\\
			&=\sum_{N_1\leqslant p_0\leqslant N_2}O\left(\frac{T^{n-2}}{p_0^{\codim_{\BFQ}(Z)}}+T^{(n-2)(1-\beta_{\BFQ})}p_0^{\frac{(3n-2)(n-1)}{2}+\dim Z}\right)\\ 
			&\ll\frac{T^{n-2}}{N_1^{\codim_{\BFQ}(Z)-1}\log N_1}+T^{(n-2)(1-\beta_{\BFQ})}N_2^{\frac{(3n-2)(n-1)}{2}+\dim Z+1},
	\end{align*} as desired.
\end{proof}

\subsection{Intermediate primes of polynomial range}\label{se:er:primeinter}
 The goal of this section is to show:
\begin{theorem}\label{th:intermediateprimes}
	 Assume moreover, if $n=3$, that the quadratic form $q$ is anisotropic over $\BQ$ and $-m\det q\neq\square$. Then for any $\alpha>0$,
	\begin{align*}
		V(T;T^\alpha, T)=O_{\alpha}\left( \frac{T^{n-2}}{\log T}\right).
	\end{align*}
\end{theorem}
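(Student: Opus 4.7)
The plan is to begin with the union bound
\begin{equation*}
V(T;T^\alpha, T) \;\leq\; \sum_{T^\alpha \leq p \leq T} V_p(T;\CZ),
\end{equation*}
with $V_p(T;\CZ)$ as in (\ref{eq:NpT}), and control each $V_p(T;\CZ)$ uniformly in $p$ by decomposing over residues $\xi \in \CZ(\BF_p)$ and bounding the number of integral lifts. By Lang-Weil, $\#\CZ(\BF_p) \ll p^{\dim Z} \leq p^{n-3}$. The lift count will be estimated by a slicing argument: fixing one coordinate $x_n$ satisfying $x_n \equiv \xi_n \pmod{p}$ (so at most $O(T/p+1)$ choices with $|x_n|\leq T$), the remaining equation on $(x_1,\dots,x_{n-1})$ is a quadric in $n-1$ variables subject to a residue condition modulo $p$.

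For $n\geq 4$, to be isolated as Proposition \ref{prop:intern4}, each slice is a quadric in at least three variables, and the uniform point-counting bounds of Browning-Gorodnik \cite[\S4, \S5]{Browning-Gorodnik} apply directly per slice. Summing over admissible $x_n$, then over $\xi \in \CZ(\BF_p)$, and finally over primes $p\in[T^\alpha,T]$, a standard application of the prime number theorem yields $O_\alpha(T^{n-2}/\log T)$.

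The case $n=3$ is more delicate: each slice is now a plane affine conic, and a slice that contains a line or a parabolic (rank-one) conic would contribute far too many integral points of bounded height. The key geometric input, to be recorded as Lemma \ref{le:notsquare}, is that under the two hypotheses that $q$ is $\BQ$-anisotropic and $-m\det q\neq \square$, no slice of $\CQ$ contains such a degenerate component: a line component would force a rational isotropic vector of $q$, contradicting anisotropy, while a parabolic degeneration produces, via a discriminant/Gram-matrix computation on the slice, a square relation for $-m\det q$. Granting the lemma (Proposition \ref{prop:intern3}), every slice is a smooth irreducible plane conic, and the classical counts --- $O_\varepsilon(T^\varepsilon)$ in the definite case and $O(\log T)$ in the indefinite case --- combine with the congruence on $(x_1,x_2)$ modulo $p$ to give a per-prime bound that sums to $O_\alpha(T/\log T) = O_\alpha(T^{n-2}/\log T)$.

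The main obstacle is indeed the $n=3$ case: ensuring uniformity in $p$ \emph{and} in the slice parameter $x_n$ of the conic lift count under $(x_1,x_2)\equiv(\xi_1,\xi_2)\pmod p$ is genuinely subtle, because the discriminants, class numbers and fundamental units of the binary forms appearing as slices all vary with $x_n$. The anisotropy plus non-square hypothesis is precisely what guarantees uniform non-degeneracy of every slice and allows the Browning-Gorodnik machinery to extend cleanly to the three-variable case without sacrificing the $1/\log T$ saving extracted from the prime sum.
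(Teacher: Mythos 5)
Your decomposition over residues $\overline{\xi}\in\CZ(\BF_p)$ with Lang-Weil, and your identification of the geometric input for $n=3$ (anisotropy of $q$ forbids a rank-one slice, $-m\det q\neq\square$ forbids a line component hence each slice is an irreducible plane conic) both match the paper, where the latter appears as Lemma \ref{le:notsquare}. However, the slicing step you describe — fixing one original coordinate $x_n\equiv\xi_n\pmod p$, giving $O(T/p+1)$ slices — does not produce a summable per-prime bound, and this is precisely the obstruction the paper itself flags before the proof (``it a priori does not provide a desired power saving for the case $n=3$, as was also encountered in \cite[p.~1078]{Browning-Gorodnik}'').

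Concretely, for $n=3$, your route gives at best $O(T/p)$ slices, each contributing $O_\varepsilon((T/p)^\varepsilon)$ integral lifts (dimension growth after passing to the congruence lattice $(\xi_1,\xi_2)+p\BZ^2$). The resulting bound $V_p(T;\overline{\xi})\ll_\varepsilon (T/p)^{1+\varepsilon}$, summed over primes $p\in[T^\alpha,T]$, yields $\sum_p T^{1+\varepsilon}/p\gg T\log\log T$, which is far worse than the target $T/\log T$. The paper's actual mechanism is different: after translating $\XX=\XX_0+p\yy$, the solution vector $\yy$ lies in a lattice $\Gamma_{\overline\xi}^p\subset\BZ^3$ of determinant $\asymp p$, a minimal basis $\mathbf{l}_1,\mathbf{l}_2,\mathbf{l}_3$ is chosen with $\|\mathbf{l}_1\|\|\mathbf{l}_2\|\|\mathbf{l}_3\|\asymp p$, and one slices in the coordinate dual to the \emph{largest} basis vector $\mathbf{l}_3$; since $\|\mathbf{l}_3\|\gg p^{1/3}$, the number of slices is $\ll 1+T/p^{4/3}$ rather than $\ll T/p$. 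That extra $p^{1/3}$ saving in the \emph{number} of slices is exactly what makes $\sum_{p\geq T^\alpha} T/p^{4/3}\ll T^{1-\alpha/3}$ a power saving, and hence the whole prime sum $O_\alpha(T/\log T)$. Your Lemma \ref{le:notsquare} is then used to verify that each sliced conic has rank-two quadratic part and is irreducible so that dimension growth applies uniformly with exponent $\varepsilon$; it does not, by itself, rescue the naive slicing. (Your worry about class numbers and fundamental units varying with $x_n$ is a red herring: the paper never invokes class numbers; uniformity comes from Serre-type dimension-growth bounds.) The same lattice/minimal-basis trick is what underlies the cited Browning–Gorodnik bound $V_p(T;\overline{\xi})\ll_\varepsilon(T/p)^{n-3+\varepsilon}(1+T/p^{n/(n-1)})$ for $n\geq4$ (Proposition \ref{prop:intern4}); ``apply \S4,\S5 directly per slice'' glosses over the fact that the crucial $p^{n/(n-1)}$ in the denominator is not obtained by slicing in a fixed original coordinate.
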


Our method of obtaining the estimate in Theorem \ref{th:intermediateprimes} is to break the sum into residue classes. For any $l\geqslant 2,\overline{\xi}\in (\BZ/l)^n$, let us consider
\begin{equation}\label{eq:NlT}
V_l(T;\overline{\xi}):=\#\{\XX\in\CQ(\BZ):\|\XX\|\leqslant T,\XX\equiv\overline{\xi}\mod l\}.
\end{equation}
We shall separate our discussion into the cases $n=3$ and $n\geqslant 4$. 
It turns out that we need satisfactory bounds for the quantity \eqref{eq:NlT} with $l=p$, which are uniform with respect to $p$ and $\mathbf{\xi}\in \BF_p^n$, and we need to prove that the contribution from summing over all $p$ in this range is still satisfactory, compared to the order of magnitude $T^{n-2}$. Our argument is inspired by \cite[\S5]{Browning-Gorodnik}. 

\subsubsection{Ingredients}
We record here a uniform estimate for the growth of integral points on quadratic hypersurfaces due to Browning-Gorodnik. This result is also useful in \S\ref{se:polyresp}. For $g(\xx)\in \BQ[x_1,\cdots,x_L]$ a polynomial of degree two, the \emph{quadratic part} $g_0$ of $g$ is the homogeneous degree two part of $g$. Let $\rank(g_0)$ denote the rank of the quadratic form $g_0$. 
\begin{theorem}[\cite{Browning-Gorodnik} Theorem 1.11]\label{thm:BrowningGorodnikaffinequadric}
	For any $\varepsilon>0$, we have, uniformly for any irreducible polynomial $g(\xx)\in\BZ[x_1,\cdots,x_L]$ of degree two with $\rank(g_0)\geqslant 2$, 
	$$\#\{\XX\in\BZ^L:\|\XX\|\leqslant T,g(\XX)=0\}=O_{\varepsilon} (T^{L-2+\varepsilon}).$$
	The implied constant is independent of the polynomial $g$.
\end{theorem}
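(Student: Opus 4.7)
This is quoted directly from \cite[Theorem~1.11]{Browning-Gorodnik}; the present article uses it as a black box, so in lieu of an in-paper proof let me sketch how one can recover it by a direct induction on $L$.

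The plan is to induct on $L$ via slicing in a single coordinate. The base case is $L=2$: an irreducible quadratic polynomial $g(x_1,x_2)\in\BZ[x_1,x_2]$ whose quadratic part has rank $\geqslant 2$ defines a geometrically integral affine conic. The hypotheses rule out a line or a pair of parallel lines, the only degenerations that would yield $\gg T$ integer points. If the conic has a $\BQ$-rational point, parametrising from that point turns the integer-point count into a factorisation problem $uv=N$ with $|N|\ll T^2$, bounded by $\tau(N)=O_\varepsilon(T^\varepsilon)$; if not, the count is controlled by the number of representations of a bounded integer by a binary quadratic form, again $O_\varepsilon(T^\varepsilon)$ by Gauss-type class number estimates, with constants uniform in the conic.

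For the inductive step, fix one coordinate $x_L\in[-T,T]\cap\BZ$ and restrict $g$ to the hyperplane $\{x_L=\text{const}\}$ to obtain a quadratic polynomial $g_{x_L}\in\BZ[x_1,\dots,x_{L-1}]$. For all but $O(1)$ values of $x_L$, the polynomial $g_{x_L}$ remains irreducible of degree two with rank-$\geqslant 2$ quadratic part, the exceptional $x_L$ being zeroes of a non-zero polynomial of bounded degree (coming from the relevant discriminants of $g$ and of $g_0$ restricted to the hyperplane). On each generic slice the inductive hypothesis gives $O_\varepsilon(T^{L-3+\varepsilon})$ integer points; summing over the $O(T)$ admissible $x_L$ yields the required $O_\varepsilon(T^{L-2+\varepsilon})$. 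The $O(1)$ degenerate slices are handled by a trivial codimension bound: each such slice cuts out a proper subvariety of $\BA^{L-1}$, hence contributes at most $O(T^{L-2})$, which is absorbed into the target bound.

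The main obstacle is genuine \emph{uniformity in $g$}: all implied constants must depend only on $L$ and $\varepsilon$, never on the size of the coefficients of $g$ or on its discriminant. This is why every step has to be purely geometric. In the base case the divisor-function bound uses only $\tau(N)=O_\varepsilon(N^\varepsilon)$, with no dependence on the conic. In the inductive step the polynomial controlling the exceptional slices has degree bounded only by $\deg g=2$, so the count $O(1)$ of exceptional $x_L$ is uniform in $g$. A more powerful---but less elementary---alternative is Heath--Brown's determinant method applied directly to the affine quadric, which in favourable ranges can even replace $T^\varepsilon$ by a power of $\log T$ and is the likely route taken in \cite{Browning-Gorodnik} to establish the quoted statement in its fully uniform form.
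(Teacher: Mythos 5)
The paper does not prove this statement at all: it is quoted verbatim from \cite[Theorem 1.11]{Browning-Gorodnik} and used as a black box, as you correctly note at the outset. So there is no ``paper proof'' to compare against, and what remains to assess is whether your reconstruction sketch is sound. It is not quite, and the gap is in the inductive step.

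You assert that for all but $O(1)$ values of the fixed coordinate $x_L=c$ the restricted polynomial $g_{x_L=c}$ ``remains irreducible of degree two with rank-$\geqslant 2$ quadratic part, the exceptional $x_L$ being zeroes of a non-zero polynomial of bounded degree.'' But the quadratic part of $g_{x_L=c}$, as a form in $(x_1,\dots,x_{L-1})$, is $g_0(x_1,\dots,x_{L-1},0)$, which does not depend on $c$ at all. So the rank condition is an all-or-nothing property of the chosen coordinate, not something that holds away from finitely many exceptional values; the ``discriminant polynomial in $x_L$'' you invoke controls only the irreducibility of the slice, not its rank. Moreover there are quadratic forms of full rank where \emph{every} coordinate slice drops to rank $1$. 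Take $L=3$ and
$g_0=x_1^2+x_2^2+x_3^2-2x_1x_2-2x_2x_3-2x_1x_3$,
which has determinant $-4\neq 0$, hence rank $3$; yet $g_0|_{x_3=0}=(x_1-x_2)^2$, $g_0|_{x_1=0}=(x_2-x_3)^2$, and $g_0|_{x_2=0}=(x_1-x_3)^2$ are all of rank $1$. For such $g_0$ your induction never gets off the ground: every coordinate slice is a parabola-type polynomial, contributing up to $\gg T/|c|$ integer points to the slice $x_3=c$ rather than the $O_\varepsilon(T^{\varepsilon})$ your inductive hypothesis would predict, and the total only comes out to $O(T\log T)$ after a separate divisor-sum argument that the sketch does not contain. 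The remedy (choosing a slicing direction after a unimodular change of coordinates, e.g.\ writing $g_0=u^2-4vw$ above, or working with shortest-vector reduced bases as Browning--Gorodnik actually do) is genuinely needed and not a formality; your uniformity discussion, which addresses only the degree of the exceptional-slice polynomial, does not touch this point.

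For the base case $L=2$ your plan is roughly right, but ``class number estimates'' with the required uniformity in $g$ (in particular uniformity in the discriminant of $g_0$ and in the size of a rational point if one is used to parametrise) require more care than indicated; the uniform divisor-bound reductions are standard but should at least be named. Finally, the present paper does give a clue: it attributes the $L=3$ case to projective dimension-growth bounds (Browning, Heath-Brown, Salberger), which is a different and genuinely more robust input than the elementary slicing you propose.
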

The case of $L=3$ is a direct consequence of dimension growth bounds obtained by Browning, Heath-Brown and Salberger (cf. \cite[Lemma 4.1]{Browning-Gorodnik}).
\subsubsection{Affine quadrics with three variables}
We start by the more involved case $n=3$.
We first show the following basic fact. Recall that we always assume $\BFQ(\BQ)\neq \varnothing$.
\begin{lemma}\label{le:notsquare}
	The condition $-m\det q\neq\square$ is equivalent to that $\BFQ$ does not contain any line over $\BQ$.
\end{lemma}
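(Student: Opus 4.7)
The plan is to reduce containment of a line in $\BFQ$ to isotropy of a binary form, and then identify the relevant discriminant with $-m\det(q)$ modulo squares.

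First, I would observe that any $\BQ$-rational line $L\subset\BA^3_\BQ$ in $\BFQ$ admits at least one rational point; indeed its direction is defined over $\BQ$ and one can choose any fixed parameter value to produce a rational point. Thus, the existence of a line in $\BFQ$ is equivalent to the existence of $P\in\BFQ(\BQ)$ and a nonzero $\vv\in\BQ^3$ such that $P+t\vv\in\BFQ$ for all $t$. Using the polar form $\langle\cdot,\cdot\rangle_q$ of $q$, expanding $q(P+t\vv)=q(P)+t\langle P,\vv\rangle_q+t^2 q(\vv)$ and comparing coefficients with the constant polynomial $m$ shows that this is equivalent to $\vv\in P^\perp$ and $q(\vv)=0$. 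In other words, $\BFQ$ contains a line over $\BQ$ if and only if the restriction $q|_{P^\perp}$ is isotropic over $\BQ$ for some (equivalently, as discussed below, any) $P\in\BFQ(\BQ)$.

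Second, I would use the hypothesis $\BFQ(\BQ)\neq\varnothing$ to pick $P\in\BFQ(\BQ)$, and observe that since $q(P)=m\neq 0$ the decomposition $\BQ^3=\BQ P\oplus P^\perp$ is $q$-orthogonal, and that $q|_{P^\perp}$ is a non-degenerate binary form (any element of its radical would lie in the radical of $q$, which is zero by non-degeneracy of $q$). Choosing a $\BQ$-basis of $\BQ^3$ adapted to this decomposition puts the Gram matrix of $q$ into block-diagonal form $\diag(m,M')$ where $M'$ is the Gram matrix of $q|_{P^\perp}$; comparing determinants modulo squares yields
\[
\det(q)\equiv m\cdot\det(M')\pmod{(\BQ^\times)^2},
\]
so that
\[
-\det(q|_{P^\perp})\equiv -m\det(q)\pmod{(\BQ^\times)^2}.
\]
In particular the class of $-\det(q|_{P^\perp})$ modulo squares does not depend on the choice of $P\in\BFQ(\BQ)$.

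Third, I would invoke the classical fact that a non-degenerate binary quadratic form over $\BQ$ is isotropic if and only if its discriminant (equivalently, minus its determinant modulo squares) is a square. Combined with the two reductions above, this gives the chain of equivalences: $\BFQ$ contains a $\BQ$-line $\iff$ $q|_{P^\perp}$ is $\BQ$-isotropic $\iff$ $-\det(q|_{P^\perp})\in(\BQ^\times)^2$ $\iff$ $-m\det(q)\in(\BQ^\times)^2$. Negating both sides yields the claim. The only subtlety is the bookkeeping in the discriminant computation (especially the convention for how the determinant transforms under change of basis, always up to squares), but this is routine; no serious obstacle is anticipated.
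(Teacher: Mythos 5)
Your proof is correct and takes a genuinely different, more elementary route than the paper's. The paper argues cohomologically: it identifies $\Pic(\BFQ_{\overline{\BQ}})\cong\BZ$ via the complement-of-a-hyperplane-section picture on a projective quadric surface, notes that a $\BQ$-line (if one exists) generates $\Pic(\BFQ)$, and then computes $\Pic(\BFQ)\cong\hat{H}^{\Gal(\BQ(\sqrt{d})/\BQ)}$ with $H\cong(x^2-dy^2=1)$ and $d=-m\det q$, concluding that $\Pic(\BFQ)=0$ precisely when $d\neq\square$ (and exhibiting the two lines in the tangent plane at $P$ when $d=\square$). Your approach bypasses Picard groups entirely: you observe that a $\BQ$-line through $P\in\BFQ(\BQ)$ in direction $\vv$ is equivalent to $\vv\in P^\perp$ and $q(\vv)=0$, reduce to isotropy of the binary form $q|_{P^\perp}$, and identify $-\det(q|_{P^\perp})\equiv -m\det(q)\pmod{(\BQ^\times)^2}$ via the orthogonal splitting $\BQ^3=\BQ P\oplus P^\perp$; the classical criterion that a nondegenerate binary form is isotropic iff minus its determinant is a square then finishes the argument. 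The paper's approach is more conceptual and slots naturally into the Brauer--Manin/descent framework surrounding the rest of the article; yours is shorter, self-contained, and arguably better suited as a standalone lemma about conics. One minor point you should make explicit when writing this up: the first step silently uses that a $\BQ$-line contained in $\BFQ$ necessarily meets $\BFQ(\BQ)$ — you do note that a line defined over $\BQ$ has rational points, which is what is needed — and the discriminant bookkeeping (whether $\det(q)$ means the determinant of the Gram matrix of the polar form with or without a factor of $2$) should be pinned down to one convention, though as you say this only shifts things by squares and does not affect the conclusion.
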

\begin{proof}
	 We first note that $\Pic(\BFQ_{\overline{\BQ}})=\BZ$. To see how $\Pic(\BFQ_{\overline{\BQ}})$ is generated, we recall that a projective quadric surface $S$ in $\BP^3$ over $\overline{\BQ}$ has Picard group $\Pic(S)\cong \BZ^2$. Any hyperplane section $E\in|\mathcal{O}(1)|$ intersects $S$ at a conic curve $C$ of divisor type $(1,1)$. So that $\Pic(S\setminus E)\cong \BZ^2/\BZ(1,1)\cong \BZ$. In particular, if the conic curve $C$ splits into two lines, then each of them generates $\Pic(S\setminus E)$. See \cite[II. Examples 6.6.1 \& 6.6.2]{Hartshorne}.
	 
	 So if we compactify $\BFQ$ into $\overline{\BFQ}\subset\BP^3$ and view $\BFQ$ as the complement of some hyperplane section in $\overline{\BFQ}$, then the class of a $\BQ$-line on $\BFQ$ (if any) generates $\Pic(\BFQ)$, since any $\BQ$-plane through that line intersects $\BFQ$ at another $\BQ$-line.
	 Let $d:=-m\det q$ and fix $P\in \BFQ(\BQ)$. Let $H\cong (x^2-dy^2=1)$ be the stabilizer of $\Sp_q$ acting on $P$ (cf. $\S$\ref{se:affinequad}). The group $G=\Gal(\BQ(\sqrt{d}/\BQ))$ operates on $\hat{H}\cong \Pic(\BFQ_{\overline{\BQ}})$, so that $$\Pic(\BFQ)\cong \hat{H}^G.$$ On the one hand, if $d=\square$, then the tangent plane of $\BFQ$ at $P$ intersects $\BFQ$ at two lines over $\BQ$ (cf. \cite[p. 333]{CT-Xu}). On the other hand, if $d\neq\square$, by \cite[p. 331]{CT-Xu}, we have $\Pic(\BFQ)=0$. So we conclude that $\BFQ$ contains no $\BQ$-lines if and only if $d\neq\square$.
\end{proof}

With this at hand, recalling \eqref{eq:NlT}, we now show:

\begin{proposition}\label{prop:intern3}
	Assume that $n=3$. Then under the assumption of Theorem \ref{th:intermediateprimes}, uniformly for any $1\ll p\leqslant T$ and $\overline{\xi}\in \BF_p^3$, we have
	$$V_p(T;\overline{\xi})\ll_{\varepsilon} \left(\frac{T}{p}\right)^\varepsilon\left(1+\frac{T}{p^\frac{4}{3}}\right).$$
\end{proposition}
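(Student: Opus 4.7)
The plan is to decompose $V_p(T;\overline{\xi})$ by slicing along one coordinate and bound the contribution of each slice uniformly. We may assume $\overline{\xi}\in\CQ(\BF_p)$ (otherwise $V_p(T;\overline{\xi})=0$), and we fix a lift $\tilde{\xi}\in\BZ^3$ with $\|\tilde{\xi}\|_\infty<p$. Writing $g_c(x_1,x_2):=q(x_1,x_2,c)$ and $C_c:=\{g_c=m\}\subset\BA^2_\BQ$, one has
\[
V_p(T;\overline{\xi})\;=\;\sum_{\substack{c\in\BZ,\ |c|\leq T\\ c\equiv \overline{\xi}_3\,(p)}} N(c),
\]
where $N(c)$ counts $(x_1,x_2)\in\BZ^2$ on $C_c$ with $|x_i|\leq T$ and $x_i\equiv\overline{\xi}_i\pmod p$, so the outer sum has $O(1+T/p)$ terms.

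Every slice $C_c$ shares the common quadratic part $Q(x_1,x_2):=q(x_1,x_2,0)$, which is $\BQ$-anisotropic of rank $2$ because $q$ is $\BQ$-anisotropic; hence no slice is a parabolic (rank $1$) conic. The discriminant of the projective closure $\overline{C_c}\subset\BP^2_\BQ$ is a polynomial of degree $\leq 2$ in $c$, so only $O(1)$ slices are projectively singular. By Lemma \ref{le:notsquare}, $\BFQ$ contains no $\BQ$-line, so no slice contains one either; together with the $\BQ$-anisotropy of $Q$ (which also rules out $\BQ$-rational points at infinity), each singular slice must be a pair of Galois-conjugate lines over a quadratic extension crossing at most at one $\BQ$-point, and therefore contributes $O(1)$ to the total sum.

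For the smooth slices I would substitute $x_i=\tilde{\xi}_i+p y_i$ ($i=1,2$), so that after dividing by $p$ the equation becomes
\[
p\,Q(y_1,y_2)+L_c(y_1,y_2)+r_c=0,
\]
an irreducible plane conic with rank-$2$ quadratic part. Theorem \ref{thm:BrowningGorodnikaffinequadric} with $L=2$ then yields $N(c)\ll_\varepsilon(1+T/p)^{\varepsilon}$ uniformly in $c$, $\tilde{\xi}$ and $p$. However, a naive summation over the $O(1+T/p)$ values of $c$ produces only the weaker bound $(1+T/p)^{1+\varepsilon}$. To obtain the sharper estimate $(T/p)^\varepsilon(1+T/p^{4/3})$ I would instead work with the full three-variable substitution $\XX=\tilde{\xi}+p\YY$, which transforms the problem into counting $\YY\in\BZ^3$ with $\|\YY\|\ll 1+T/p$ on the affine quadric $p\,q(\YY)+B(\tilde{\xi},\YY)+k_0=0$, and apply the uniform quadric-family estimates underlying \cite[\S4--\S5]{Browning-Gorodnik}. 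The absence of $\BQ$-lines and of rank-$1$ slices, guaranteed by Lemma \ref{le:notsquare} and the $\BQ$-anisotropy of $q$, ensures that those estimates apply without the degenerate configurations flagged at \cite[p.~1078]{Browning-Gorodnik}.

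The main obstacle is precisely extracting the extra $p^{-1/3}$ saving encoded in $T/p^{4/3}$, on top of the $T/p$ one gets from multiplying the slice count by a per-slice bound. This is where both hypotheses ($-m\det q\neq\square$ and the $\BQ$-anisotropy of $q$) become essential: they exclude exactly the two geometric pathologies---$\BQ$-lines on $\BFQ$ and parabolic slices---whose $\gg T/p$ integer points would overwhelm the target estimate. Once they are excluded, the remaining slices are smooth anisotropic conics (or singular conjugate-line slices with $O(1)$ points), for which the uniform quadric bounds deliver the claimed $p^{-4/3}$ dependence.
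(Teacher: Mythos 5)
Your first reduction is sound, and you correctly diagnose that a naive slicing along one coordinate, with the per-slice bound $O_\varepsilon((T/p)^\varepsilon)$ from Theorem \ref{thm:BrowningGorodnikaffinequadric} and $O(1+T/p)$ slices, delivers only $(1+T/p)^{1+\varepsilon}$, which is weaker than the claimed $(T/p)^\varepsilon(1+T/p^{4/3})$. You also correctly identify the roles of the two hypotheses: $\BQ$-anisotropy of $q$ rules out rank-$1$ (parabolic) slices, and $-m\det q\neq\square$ (via Lemma \ref{le:notsquare}) rules out $\BQ$-lines, hence reducible slices. These are precisely the structural inputs the paper uses. However, you stop at the point where the real work begins: you announce the three-variable substitution $\XX=\tilde\xi+p\YY$ and then defer to ``the uniform quadric-family estimates underlying \cite[\S4--\S5]{Browning-Gorodnik}'' without extracting the mechanism. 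Applying Theorem \ref{thm:BrowningGorodnikaffinequadric} with $L=3$ to the transformed quadric $p\,q(\YY)+B(\tilde\xi,\YY)+k_0=0$ in a box of size $T/p$ again yields only $O_\varepsilon((T/p)^{1+\varepsilon})$, so the substitution alone buys nothing; you explicitly flag ``extracting the extra $p^{-1/3}$ saving'' as the obstacle, and the proposal never overcomes it.

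The missing idea is a lattice constraint plus lattice reduction. The paper chooses $\XX_0\in\CQ(\BZ)$ (an actual integral point on $\CQ$ with $\XX_0\equiv\overline\xi\bmod p$, not merely a lift of $\overline\xi$), so that $q(\XX_0)=m$ and the substitution $\XX=\XX_0+p\mathbf{y}$ gives $\mathbf{y}\cdot\nabla q(\XX_0)+p\,q(\mathbf{y})=0$. Reducing modulo $p$ forces $\mathbf{y}\cdot\nabla q(\XX_0)\equiv 0\ (\mathrm{mod}\ p)$, i.e.\ $\mathbf{y}$ lies in a sublattice $\Gamma^p_{\overline\xi}\subset\BZ^3$ of covolume $\asymp p$. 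One then chooses a minimal (reduced) basis $\mathbf{l}_1,\mathbf{l}_2,\mathbf{l}_3$ with $\|\mathbf{l}_1\|\leqslant\|\mathbf{l}_2\|\leqslant\|\mathbf{l}_3\|$ and $\|\mathbf{l}_1\|\|\mathbf{l}_2\|\|\mathbf{l}_3\|\asymp p$, and slices along the $z_3$-coordinate in this basis. Because $\|\mathbf{l}_3\|\gg p^{1/3}$, the number of $z_3$-slices is $\ll 1+T/(p\|\mathbf{l}_3\|)\ll 1+T/p^{4/3}$, while each slice is a plane conic whose quadratic part has rank $2$ (by $\BQ$-anisotropy) and which is irreducible (by the absence of $\BQ$-lines), so Theorem \ref{thm:BrowningGorodnikaffinequadric} with $L=2$ bounds its points by $O_\varepsilon((T/p)^\varepsilon)$. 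Multiplying gives the target. Without the lattice reduction, the slice count stays at $1+T/p$ and the $p^{-1/3}$ gain is unobtainable; this is the precise step your proposal leaves unfilled.
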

\begin{proof}
We fix $1\ll p\leqslant T$ and $\overline{\xi}\in \BF_p^3$ in our following arguments.
	Either $V_p(T;\overline{\xi})=0$, for which the desired estimate is evident, or we can find $\XX_0\in \mathcal{Q}(\BZ)$ such that, $\|\XX_0\|\leqslant T,\XX_0\equiv\overline{\xi}\mod p$.
	On making the change of variables $\XX=\XX_0+p\mathbf{y}$, the new variable $\mathbf{y}\in\BZ^3$ satisfies the following equations:
\begin{equation}\label{eq:changevar1}
	\|\yy\|\leqslant \frac{2T}{p},\quad \mathbf{y}\cdot \nabla q(\XX_0)+pq(\mathbf{y})=0.
\end{equation}
	Since $\XX_0$ is a smooth point of $\mathcal{Q}$, we have $\nabla q(\XX_0)\neq \mathbf{0}$. Moreover $p\nmid \gcd(\XX_0)$, since otherwise $p\mid m$, which cannot happen for $p$ large enough. Since $\XX_0\equiv\overline{\xi}\mod p$, the second equation of \eqref{eq:changevar1} implies that $\yy$ lies in the lattice
	$$\Gamma_{\overline{\xi}}^p:=\{\xx\in\BZ^3: \xx\cdot \nabla q(\overline{\xi})\equiv 0\mod p\}$$
	of determinant $\gg\ll p$, the implied constants depending only on $q$  and $m$. \footnote{in fact $\det(\Gamma_{\overline{\xi}}^p)=p$ for $p$ large enough, cf. \cite[p. 1076-1077]{Browning-Gorodnik}.} Thus
\begin{equation}\label{eq:AM}
	V_p(T;\overline{\xi})\leqslant M_{\XX_0,\overline{\xi}}^p(T),
\end{equation}
 where
	$$M_{\XX_0,\overline{\xi}}^p(T):=\#\left\{\yy\in\Gamma_{\overline{\xi}}^p:\|\yy\|\leqslant \frac{2T}{p},\mathbf{y}\cdot \nabla q(\XX_0)+pq(\mathbf{y})=0\right\}.$$
	We are led to bounding  $M_{\XX_0,\overline{\xi}}^p(T)$.

	Choose a minimal basis $\mathbf{L}=(\mathbf{l}_1,\mathbf{l}_2,\mathbf{l}_3)$ of $\Gamma_{\overline{\xi}}^p$ such that (cf. \cite[(5.3)]{Browning-Gorodnik})
\begin{equation}\label{eq:l1l2l3}
	\|\mathbf{l}_1\|\leqslant \|\mathbf{l}_2\|\leqslant \|\mathbf{l}_3\|, \quad \|\mathbf{l}_1\|\|\mathbf{l}_2\|\|\mathbf{l}_3\|\asymp p,
\end{equation}
	so that, on making the non-singular change of variables $\yy\mapsto \mathbf{L}\zz$, the new variable $\zz=(z_1,z_2,z_3)\in\BZ^3$ satisfies, by \eqref{eq:changevar1} and \cite[p. 1075]{Browning-Gorodnik}, 
\begin{equation}\label{eq:changevar2}
	|z_i|\leqslant \frac{c_iT}{p\|\mathbf{l}_i\|},1\leqslant i\leqslant 3,\quad \widetilde{q}(\zz)+\zz\cdot\mathbf{Y}_0=0,
\end{equation}
	where $c_i>0,1\leqslant i\leqslant 3$ are absolute constants, and $$\widetilde{q}(\zz)=q(\mathbf{L}\zz),\quad \mathbf{Y}_0=p^{-1}\mathbf{L}\nabla q(\XX_0).$$

	Now we slice the second equation of \eqref{eq:changevar2} and get for each fixed integer $z_3$ a resulting polynomial $q_{z_3}\in \BZ[z_1,z_2]$. By \eqref{eq:changevar2}, the total number of $z_3$ is \begin{equation}\label{eq:z3}
	\ll 1+\frac{T}{p\|\mathbf{l_3}\|}.
	\end{equation}
	
	We first claim that for any $z_3=\kappa_1\in \BQ$, $\rank((q_{\kappa_1})_0)=2$.
	Indeed, the quadratic part of $q_{\kappa_1}$ is $$(q_{\kappa_1})_0=\widetilde{q}(z_1,z_2,0).$$ The latter, viewed as a quadratic form in two varieties, has rank $1$  (that is, $\rank((q_{\kappa_1})_0)=1$) if and only if $(z_3=0)$ is the tangent plane at a certain point defined over $\BQ$ of the projective quadric $(q=0)\subset \BP^2$. By the assumption that $q$ is $\BQ$-anisotropic, we conclude that this is impossible, which proves the claim. 
	
	We further claim that for any $z_3=\kappa_2\in\BQ$, the polynomial $q_{\kappa_2}$ is irreducible over $\BQ$. Indeed, if the polynomial $q_{\kappa_2}$ is reducible over $\BQ$ for certain $\kappa_2\in\BQ$, that is, it splits into the product of two polynomials $f_1,f_2$ of degree one, then $(z_3=\kappa_2)\cap (f_i=0)$ defines a $\BQ$-line on $\BFQ$ for $i=1,2$ (since the change of variables at each step above is non-singular). This is absurd by Lemma \ref{le:notsquare} as we always assume that $-m\det q\neq\square$. 

	Consequently, using \eqref{eq:changevar2}, Theorem \ref{thm:BrowningGorodnikaffinequadric} shows that the contribution from integral points on each quadric $(q_{z_3}=0)$ with $z_3\in\BZ$ is (as we assume $p\leqslant T$)
	\begin{align*}
		A_{z_3}^p(T)&:=\#\{(z_1,z_2)\in\BZ^2:|z_i|\ll \frac{T}{p\|\mathbf{l}_i\|},1\leqslant i\leqslant 2,q_{z_3}(z_1,z_2)=0\}\\ &=O_\varepsilon\left(1+\left(\frac{T}{p\|\mathbf{l}_1\|}\right)^\varepsilon\right)=O_\varepsilon\left(\left(\frac{T}{p}\right)^\varepsilon\right),
	\end{align*} where the implied constant is independent of $z_3$.
Therefore, taking \eqref{eq:z3} into account, we obtain an upper bound for $M_{\XX_0,\overline{\xi}}^p(T)$ as follows:
	\begin{align*}
	M_{\XX_0,\overline{\xi}}^p(T)&\leqslant \sum_{|z_3|\ll \frac{T}{p\|\mathbf{l_3}\|}}A_{z_3}^p(T)\\ &\ll_{\varepsilon}\left( 1+\frac{T}{p\|\mathbf{l_3}\|}\right)\times \left(\frac{T}{p}\right)^\varepsilon\ll_{\varepsilon}\left(\frac{T}{p}\right)^\varepsilon\left(1+\frac{T}{p^\frac{4}{3}}\right),
	\end{align*}
	because $\|\mathbf{l_3}\|\gg p^\frac{1}{3}$ by \eqref{eq:l1l2l3}.  This finishes the proof, thanks to \eqref{eq:AM}.\end{proof}

\begin{proof}[Proof of Theorem \ref{th:intermediateprimes} for the case $n=3$]
	 We may assume that $Z\neq\varnothing$, hence $\dim Z=0$. We then have $\#\CZ(\BF_p)\leqslant \deg Z$ for every prime $p$. Employing the Prime Number Theorem with partial summation, we have
\begin{equation}\label{eq:primesumaxler}
	\sum_{p\leqslant X} \frac{1}{p^\sigma}\ll_\sigma 	\frac{X^{1-\sigma}}{\log X},\quad 0<\sigma<1.
\end{equation}
	Using Proposition \ref{prop:intern3}, we sum over all primes in the interval $[T^{\alpha},T]$ and we get, for any $0<\varepsilon<\min(\frac{1}{2},\frac{1}{3}\alpha)$,
	\begin{align*}
		V(T;T^\alpha, T)
		\leqslant &\sum_{T^\alpha\leqslant p\leqslant T}\sum_{\overline{\xi}\in \CZ(\BF_p) }V_p(T;\overline{\xi})\\ \ll_{\varepsilon} &(\deg Z)\sum_{T^\alpha\leqslant p\leqslant T}\left(\frac{T^\varepsilon}{p^\varepsilon}+\frac{T^{1+\varepsilon}}{p^\frac{4}{3}}\right)\\
		\ll_\varepsilon &\frac{T^{\varepsilon}\times T^{1-\varepsilon}}{\log T}+T^{1+\varepsilon-\frac{1}{3}\alpha}=O_\varepsilon\left(\frac{T}{\log T}\right).
	\end{align*}
	We fix $\varepsilon>0$ small enough in terms of $\alpha$ so that the implied constant above depends only on $\alpha$. This proves the desired upper-bound.
\end{proof}
\subsubsection{Affine quadrics with at least $4$ variables}
Now we turn to the case $n\geqslant 4$. The analogous version of Proposition \ref{prop:intern3} as a key input for us is the following estimate obtained by Browning and Gorodnik.
\begin{proposition}[\cite{Browning-Gorodnik} Proposition 5.1]\label{prop:intern4}
	Assume $n\geqslant 4$. Then 
	$$V_p(T;\overline{\xi})\ll_{\varepsilon}\left(\frac{T}{p}\right)^{n-3+\varepsilon}\left(1+\frac{T}{p^{\frac{n}{n-1}}}\right)$$
	holds uniformly for any prime $1\ll p\leqslant T$ and for any $\overline{\xi}\in \BF_p^n$.
\end{proposition}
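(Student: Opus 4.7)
The plan is to adapt the slicing argument from Proposition \ref{prop:intern3} to higher dimensions, with Theorem \ref{thm:BrowningGorodnikaffinequadric} replacing the delicate rank-and-irreducibility analysis that was needed for $n = 3$. Given $\overline{\xi} \in \BF_p^n$ with $V_p(T; \overline{\xi}) > 0$, I would pick a representative $\XX_0 \in \CQ(\BZ)$ with $\|\XX_0\| \leq T$ and $\XX_0 \equiv \overline{\xi} \bmod p$, then substitute $\XX = \XX_0 + p \yy$. The new vector $\yy \in \BZ^n$ is confined to $\|\yy\| \ll T/p$, satisfies $\yy \cdot \nabla q(\XX_0) + p q(\yy) = 0$, and lies in the lattice $\Gamma^p_{\overline{\xi}} = \{\zz \in \BZ^n : \zz \cdot \nabla q(\overline{\xi}) \equiv 0 \bmod p\}$ of determinant $\asymp p$. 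Select a minimal basis $(\mathbf{l}_1, \ldots, \mathbf{l}_n)$ of $\Gamma^p_{\overline{\xi}}$ with $\|\mathbf{l}_1\| \leq \cdots \leq \|\mathbf{l}_n\|$ and $\prod_i \|\mathbf{l}_i\| \asymp p$. After the change of variable $\yy = \mathbf{L}\zz$, the problem becomes counting $\zz \in \BZ^n$ in the anisotropic box $|z_i| \ll T/(p \|\mathbf{l}_i\|)$ satisfying a non-singular quadratic equation $\widetilde q(\zz) + \zz \cdot \mathbf{Y}_0 = 0$, where $\widetilde q(\zz) = q(\mathbf{L} \zz)$ remains non-degenerate.

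Next I would slice by fixing the last coordinate $z_n$: for generic $z_n$, the sliced polynomial $q_{z_n}$ in $n-1$ variables has quadratic part of rank $\geq 2$, and Theorem \ref{thm:BrowningGorodnikaffinequadric} with $L = n-1$ and uniform height bound $T/p$ delivers $O_\varepsilon((T/p)^{n-3+\varepsilon})$ solutions per slice. The key simplification for $n \geq 4$: since the sliced affine variety sits in $\geq 3$ variables, the rank-$\geq 2$ condition on the quadratic part already forces irreducibility over $\BQ$ in the generic case, and the few exceptional $z_n$-values where the polynomial factors into linear pieces form a zero-dimensional set whose contribution can be absorbed by bounding integer points on the resulting linear components. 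Hence no arithmetic hypothesis such as $-m\det q \neq \square$ is required.

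Summing over $|z_n| \ll 1 + T/(p\|\mathbf{l}_n\|)$ yields the single-slicing estimate $(T/p)^{n-3+\varepsilon}(1 + T/(p\|\mathbf{l}_n\|))$. To extract the stronger exponent $n/(n-1)$ stated in the proposition, I would complement this by a double-slicing variant: fix both $z_n$ and $z_{n-1}$ and apply Theorem \ref{thm:BrowningGorodnikaffinequadric} with $L = n-2$, obtaining $O_\varepsilon((T/p)^{n-4+\varepsilon})$ per slice and a total of $\prod_{i = n-1}^n (1 + T/(p \|\mathbf{l}_i\|))$ slices. The main term $T^2 / (p^2 \|\mathbf{l}_{n-1}\| \|\mathbf{l}_n\|)$ here benefits from the lower bound $\|\mathbf{l}_{n-1}\| \|\mathbf{l}_n\| \gg p^{2/n}$, which follows from Minkowski's second theorem applied to $\Gamma^p_{\overline{\xi}}$ together with the ordering of the $\|\mathbf{l}_i\|$.

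The main obstacle is uniformity in $\overline{\xi}$: neither single nor double slicing alone is sharp across all regimes of successive minima, so I would run a dichotomy based on the norm of $\nabla q(\overline{\xi}) \bmod p$, decomposing $\Gamma^p_{\overline{\xi}} = \Lambda \oplus \BZ \mathbf{v}$ where $\Lambda = \{\xx \in \BZ^n : \xx \cdot \nabla q(\overline{\xi}) = 0\}$ has determinant $\asymp \|\nabla q(\overline{\xi}) \bmod p\|$ and $\mathbf{v}$ is a short complementary vector of length $\asymp p/\|\nabla q(\overline{\xi}) \bmod p\|$. Using this, one computes the distribution of the successive minima in each regime and selects the optimal slicing depth, ultimately extracting exactly the exponent $n/(n-1)$. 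This lattice-geometric bookkeeping, carried out uniformly in $\overline{\xi}$ and combined with the treatment of the finitely many reducible slices, is the technical heart of the argument, as detailed in \cite[Proposition 5.1]{Browning-Gorodnik}.
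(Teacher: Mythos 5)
You are reconstructing a proof that the paper does not actually contain: Proposition~\ref{prop:intern4} is cited verbatim from \cite[Proposition~5.1]{Browning-Gorodnik}, with no proof given in the present paper (the proof environment that follows it belongs to Theorem~\ref{th:intermediateprimes}). So there is no ``paper's own proof'' to compare against; your proposal must instead be judged against the external reference and on its internal consistency.

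Your overall plan (change of variables $\XX = \XX_0 + p\yy$, reduced minimal basis of $\Gamma^p_{\overline{\xi}}$, slicing, and an application of Theorem~\ref{thm:BrowningGorodnikaffinequadric}) is consistent with the Browning--Gorodnik paradigm and with the $n=3$ argument in Proposition~\ref{prop:intern3}. However, two genuine gaps remain which you gloss over. First, the rank hypothesis fails for deeper slicing when $n$ is small: restricting a non-degenerate quadratic form in $n$ variables to a codimension-$k$ coordinate subspace only guarantees quadratic rank $\geqslant n-2k$, so after double slicing the quadratic part can have rank $<2$ already for $n=4$ and $n=5$, and Theorem~\ref{thm:BrowningGorodnikaffinequadric} no longer applies. (Your Minkowski bound $\|\mathbf{l}_{n-1}\|\|\mathbf{l}_n\|\gg p^{2/n}$ is correct, and if the rank-$\geqslant 2$ hypothesis were guaranteed it would in fact yield $p^{-1-2/n}$, which is stronger than the claimed $p^{-n/(n-1)}$; the bottleneck is the rank condition, not the lattice inequality.) Second, for $n=4$ the singly-sliced quadratic part $\widetilde{q}(z_1,\ldots,z_{n-1},0)$ has rank $\geqslant 2$ but possibly exactly $2$ and this rank is \emph{the same for every slice $z_n=\kappa$}; when it is $2$, reducibility of the sliced polynomial is not confined to ``a zero-dimensional set'' in the way you describe, and the contribution of a single reducible slice (a union of hyperplanes) is $\asymp (T/p)^{n-2}$, which does \emph{not} fit inside the target bound $(T/p)^{n-3+\varepsilon}(1+T/p^{n/(n-1)})$. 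Handling these degenerate configurations uniformly in $\overline\xi$ is precisely the technical heart of the Browning--Gorodnik proof, and your final paragraph (``lattice-geometric bookkeeping\ldots as detailed in [Browning--Gorodnik]'') defers the hardest part back to the very reference you are reproving.

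A side remark on necessity: single slicing alone, using $\|\mathbf{l}_n\|\gg p^{1/n}$, gives $V_p(T;\overline\xi)\ll (T/p)^{n-3+\varepsilon}(1+T/p^{1+1/n})$, which is weaker than the cited exponent $n/(n-1)$. But tracking this weaker exponent through the proof of Theorem~\ref{th:intermediateprimes} (the sum $\sum_{T^\alpha\leqslant p\leqslant T} p^{n-3}(T/p)^{n-3+\varepsilon}T/p^{1+1/n+\varepsilon}$) still gives $O(T^{n-2}/\log T)$ once $\varepsilon$ is taken small relative to $\alpha$. So for the purposes of this paper the full $n/(n-1)$ exponent is not needed, and a self-contained proof of Theorem~\ref{th:intermediateprimes} for $n\geqslant 4$ could be given via single slicing alone, avoiding the subtle rank and reducibility issues entirely.
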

\begin{proof}[Proof of Theorem \ref{th:intermediateprimes} for the case $n\geqslant 4$]
Recalling the Lang-Weil estimate \eqref{eq:Lang-WeilFp} for $\CZ$, we infer from Proposition \ref{prop:intern3} and \eqref{eq:primesumaxler} that
	\begin{align*}
		V(T;T^\alpha, T)\leqslant &\sum_{T^{\alpha}\leqslant p\leqslant T}\sum_{\overline{\xi}\in \CZ(\BF_p) }V_p(T;\overline{\xi})\\ \ll_{\varepsilon}&\sum_{T^{\alpha}\leqslant p\leqslant T} p^{n-3}\times \left(\frac{T}{p}\right)^{n-3+\varepsilon}\left(1+\frac{T}{p^{\frac{n}{n-1}}}\right)\\ \ll_\varepsilon &\sum_{T^{\alpha}\leqslant p\leqslant T} \left(\frac{T^{n-3+\varepsilon}}{p^\varepsilon}+\frac{T^{n-2+\varepsilon}}{p^{1+\frac{1}{n-1}+\varepsilon}}\right)\\ \ll_\varepsilon & T^{n-3+\varepsilon}\times \frac{T^{1-\varepsilon}}{\log T}+T^{n-2+\varepsilon}\times T^{-\frac{\alpha}{n-1}} \ll_{\varepsilon}  \frac{T^{n-2}}{\log T},
	\end{align*}
	for any $0<\varepsilon<\min(\frac{1}{2},\frac{\alpha}{n-1})$. It remains to fix $\varepsilon>0$ small enough depending only on $\alpha$ to get the desired dependence for the implied constant.
\end{proof}

\subsection{Treatment of very large primes}\label{se:er:geometricsieve}
 The goal of this section is to generalise Ekedahl's geometric sieve \cite{Ekedahl}  to affine quadrics.
Our treatment is inspired by a discussion with Tim Browning, to whom we express our gratitude.
\begin{theorem}\label{th:largeprimes}
	Let $N_1=T,N_2=\infty$ in \eqref{eq:VN}. Then
	\begin{align*}
	V(T;T,\infty)=O\left(\frac{T^{n-2}}{(\log T)^{\frac{1}{2}}}\right).
	\end{align*}

\end{theorem}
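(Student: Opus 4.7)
The proof will combine Bhargava's generalised Ekedahl-type geometric sieve \cite{Bhargava} with the half-dimensional sieve for affine quadrics (Theorem~\ref{thm:halfdimsieve}), matched together via a fibration argument. After reducing to the case that $Z$ is irreducible of codimension exactly two in $\BFQ$, and using that $q$ is indefinite (hence has at least two eigenvalues of the same sign), I would perform an integer change of variables to split off a positive-definite binary block, writing $q(\xx) = q_0(x_1, \ldots, x_{n-2}) + Q_2(x_{n-1}, x_n)$ with $Q_2$ positive-definite. This yields a projection $\pi : \CQ \to \BA^{n-2}_{\BZ}$ whose generic fibre over $\mathbf{b} \in \BA^{n-2}$ is the affine conic $Q_2(y_1, y_2) = m - q_0(\mathbf{b})$. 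Let $\CB$ denote the Zariski closure of $\pi(\CZ)$ in $\BA^{n-2}_{\BZ}$; since $\dim \CZ \leq n-3$, one has $\codim_{\BA^{n-2}}(\CB) \geq 1$.

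The key observation is that for $\XX = (\pi(\XX), x_{n-1}, x_n) \in \CQ(\BZ)$ with $\|\XX\| \leq T$ and $\XX \bmod p \in \CZ$ for some $p \geq T$, the fibre constraint that $(x_{n-1}, x_n) \bmod p$ lies in the finite set $\CZ_{\pi(\XX) \bmod p}(\BF_p)$ forces $(x_{n-1}, x_n)$ into $O(1)$ choices, since $p \geq T \geq \max(|x_{n-1}|, |x_n|)$. Hence bounding $V(T; T, \infty)$ reduces, up to a bounded factor, to bounding the number of admissible base points $\pi(\XX) \in \BZ^{n-2}$ with $\|\pi(\XX)\| \leq T$ and $\pi(\XX) \bmod p \in \CB$ for some $p \geq T$.

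I would then stratify by $\codim_{\BA^{n-2}}(\CB)$. When $\codim_{\BA^{n-2}}(\CB) \geq 2$, Bhargava's generalised geometric sieve applied to $\CB \subset \BA^{n-2}_\BZ$ yields $O(T^{n-3})$ such base points, well below the target. When $\codim_{\BA^{n-2}}(\CB) = 1$, write $\CB = V(g)$ for some $g \in \BZ[x_1, \ldots, x_{n-2}]$. The subcase $g(\pi(\XX)) = 0$ places $\pi(\XX)$ on the hypersurface $\CB(\BZ)$ and is controlled by dimension counting on a variety of dimension $n-3$. In the remaining subcase $g(\pi(\XX)) \neq 0$ with $|g(\pi(\XX))| \geq p \geq T$, the constraint $\XX \in \CQ(\BZ)$ translates into the representability of $m - q_0(\pi(\XX))$ by $Q_2$ over $\BZ$; Theorem~\ref{thm:halfdimsieve} applied with $Q_1(\mathbf{x}) := m - q_0(\mathbf{x})$ then bounds the admissible base points by $O(T^{n-2}/\sqrt{\log T})$, supplying the required savings.

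The main obstacle lies in verifying the hypotheses of Theorem~\ref{thm:halfdimsieve} for the induced pair $(Q_1, Q_2)$: positive-definiteness of $Q_2$ is arranged by the splitting; smoothness of $\{Q_1 = 0\}$ in the case $L = n-2 \geq 2$ is generic in the choice of coordinates; and in the critical low-dimensional case $L = 1$ (i.e.\ $n = 3$), the anisotropic-stabilizer condition is guaranteed by Lemma~\ref{le:notsquare} together with the standing hypotheses of Theorem~\ref{thm:n=3} that $q$ is $\BQ$-anisotropic and $-m\det(q) \neq \square$. A secondary subtlety is to treat potentially non-generic positions of $\CZ$ inside $\CQ$ (e.g.\ fibres of $\pi|_\CZ$ of higher dimension than expected, or irreducible components for which the chosen splitting degenerates) by a finite stratification and repeating the argument on each stratum.
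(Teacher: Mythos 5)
Your overall architecture — split off a positive-definite binary form, project to $\BA^{n-2}$, feed the base into Bhargava's sieve when the image of $\CZ$ has codimension $\geq 2$ and into the half-dimensional sieve (Theorem~\ref{thm:halfdimsieve}) when it has codimension $1$ — is exactly the strategy the paper uses. However, there is a genuine gap at the step you label the ``key observation.''

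You assert that for $\XX=(\pi(\XX),x_{n-1},x_n)$ with $\|\XX\|\leqslant T$ and $\XX\bmod p\in\CZ$ for some $p\geqslant T$, the fibre $\CZ_{\pi(\XX)\bmod p}(\BF_p)$ is a finite set forcing $(x_{n-1},x_n)$ into $O(1)$ choices. This is precisely what fails at degenerate residues and is the entire difficulty. Even if $\pi|_Z:Z\to\BA^{n-2}_\BQ$ is generically finite, the reduction $\CZ_{\BF_p}$ can have a full conic as fibre over $\pi(\XX)\bmod p$: concretely, if $Z\subset\BFQ\cap(f=g=0)$ and all coefficients of $f(\pi(\XX),x_{n-1})$ as a polynomial in $x_{n-1}$ happen to be divisible by $p$, the constraint $f\equiv 0\bmod p$ is vacuous and $(x_{n-1},x_n)\bmod p$ ranges over a $1$-dimensional set with $\gg p$ points. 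In that case the count of lattice points $(x_{n-1},x_n)$ with $\|(x_{n-1},x_n)\|\leqslant T$ on the conic is only $O_\varepsilon(T^\varepsilon)$, and once this $T^\varepsilon$ multiplies the $T^{n-2}/\sqrt{\log T}$ coming from the half-dimensional sieve, the $\log$-saving is destroyed. Your ``secondary subtlety'' about stratifying $\CZ$ by fibre dimension does not resolve this, because the degeneration is not a geometric stratification of the $\BQ$-variety $Z$: it is an arithmetic condition, indexed by pairs $(p,\pi(\XX))$, namely $p\mid h(\pi(\XX))$ for $h$ the relevant leading coefficient. Stratifying $Z$ over $\BQ$ by the dimension of the fibres does not see this.

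The paper resolves this by a two-step elimination before projecting: one first enlarges $Z$ to $\BFQ\cap(f=g=0)$ with $f\in\BZ[x_1,\dots,x_{n-1}]$ depending on only the first $n-1$ variables and $g\in\BZ[x_1,\dots,x_{n-2}]$, and then (after reducing further so that the leading coefficient $h$ of $f$ in $x_{n-1}$ defines a codimension-$2$ scheme $(g=h=0)\subset\BA^{n-2}$) splits into two cases: either $p\mid h(\pi(\XX))$ and $p\mid g(\pi(\XX))$ simultaneously, which puts $\pi(\XX)\bmod p$ on a codimension-$2$ subscheme and is controlled by Bhargava's geometric sieve (contributing $O_\varepsilon(T^{n-3+\varepsilon})$), or $f(\pi(\XX),\,\cdot\,)\bmod p$ is a non-zero polynomial in $x_{n-1}$, in which case there are at most $\deg f$ admissible residues for $x_{n-1}$, hence $O(1)$ lifts $(x_{n-1},x_n)$ after accounting for $p\geqslant T$ and the $\pm$ ambiguity in $x_n$. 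That dichotomy is precisely what restores the $O(1)$-per-base-point bound you need before invoking the half-dimensional sieve. Your proposal needs to make this elimination explicit and keep $f$ depending on only $n-1$ variables (projecting through $\BA^{n-1}$ first, then to $\BA^{n-2}$), otherwise the ``$O(1)$ choices of $(x_{n-1},x_n)$'' claim does not hold.
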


\begin{proof}[Proof of Theorem \ref{th:largeprimes}]
Upon a change of variables, we may assume that the $\BZ$-integral model $\mathcal{Q}$ is defined by a diagonal quadratic form:
\begin{equation}\label{eq:vm}
\sum_{i=1}^{n} a_ix_i^2=m.
\end{equation}
This may affect \eqref{eq:VN} via a different choice of equivalent height functions in terms of the form $q$, which is clearly negligible.
We may assume that $a_{n-1}\cdot a_n>0$. By multiplying $-1$ to the equation \eqref{eq:vm} if necessary we can assume that both of them are $>0$. We can furthermore assume that $a_n=1$, and all other $a_i$'s are square-free integers, and we write from now on $a_{n-1}=a>0$.

Next, upon enlarging the closed subset $Z$, we claim that, there exist two polynomials $f,g$ which are non-zero, coprime and satisfy $f \in \BZ[x_1,\cdots,x_{n-1}],g\in \BZ[x_1,\cdots,x_{n-2}]$, such that 
\begin{equation}\label{eq:codim2}
	Z=\BFQ\cap(f=g=0)\subset\BA^n_\BQ.
\end{equation} 
In order to verify the claim \eqref{eq:codim2}, let us consider the maps $\pr_1:\BA^n\to\BA^{n-1},\pr_2:\BA^{n-1}\to\BA^{n-2}$, and $\pr_3=\pr_2\circ\pr_1$, the first (resp. second) being the projection onto the first $(n-1)$ (resp. $(n-2)$) coordinates. Since $\dim(Z)\leqslant n-3$, its image $\pr_3(Z)$ has codimension at least one in $\BA^{n-2}_\BQ$. Therefore we can choose a non-zero $g\in \BZ[x_1,\cdots,x_{n-2}]$ such that \begin{equation}\label{eq:functiong}
	\pr_3(Z)\subseteq Z^\prime:=(g=0)\subset \BA^{n-2}_\BQ.
\end{equation} On the other hand, since the map $\pr_1|_{\BFQ}$ is dominant, affine and finite, the subset $\pr_1(Z)\subset\BA^{n-1}_\BQ$ is closed and has codimension at least two, and $\pr_1(Z)\subset \pr_2^{-1}(Z^\prime)=Z^\prime\times\BA^1_\BQ$, the latter being of codimension one in $\BA^{n-1}_\BQ$. We can choose a non-zero $f \in \BZ[x_1,\cdots,x_{n-1}]$ such that $\pr_1(Z)\subset (f=0)\cap\pr_2^{-1}(Z^\prime)$. This gives $Z=\pr_1^{-1}(\pr_1(Z))\subset \BFQ\cap (f=g=0)\subset\BA^n_\BQ$. Note that this procedure may also be reformulated using the classical elimination theory.

 Our goal in the remaining of the proof is to reduce the problem to the affine space $\BA^{n-2}$ via the fibration $\pr_3$ and adapt Theorem \ref{le:Ekedahl} to get a satisfactory control for $V(T;T,\infty)$.
For every $y\in\pr_3(\CZ)\subset \BA_\BZ^{n-2}$, we denote by $\CZ_y:=\CZ\cap\pr_3^{-1}(y)$ the fibre over $y$ as a scheme over $\Spec(k(y))$. Let us break $\pr_3(\CZ)$ into the following two sets.
$$\CY_1:=\{y\in\pr_3(\CZ):\codim_{\BA_{k(y)}^{2}}(\CZ_y)\leqslant 1\},\quad \CY_2:=\{y\in\pr_3(\CZ):\codim_{\BA_{k(y)}^{2}}(\CZ_y)=2\}.$$ 
By the construction \eqref{eq:functiong} and Chevalley's theorem (cf. \cite[Exercise II 3.22 (d)]{Hartshorne}), $\CY_1,\CY_2\subset (g=0)\subset\BA_\BZ^{n-2}$ are constructible subsets.
Moreover by \cite[Theorem 15.1]{Matsumura} we have for $i=1,2$,
\begin{equation}\label{eq:codimYi}
	\codim_{\BA_\BZ^{n-2}}(\CY_i)\geqslant 3-i.
\end{equation}
We can now bound $V(T;T,\infty)$ from above as follows
\begin{equation}\label{eq:V1V2}
	V(T;T,\infty)\leqslant\sum_{i=1}^{2}V_i(T;T,\infty),
\end{equation} where $$V_1(T;T,\infty):=\#\{\XX\in\CQ(\BZ) :\|\XX\|\leqslant T,\text{either }g(\pr_3(\XX))=0\text{ or there exists }p\geqslant T,\pr_3(\XX)\mod p\in \CY_1(\BF_p)\},$$
$$V_2(T;T,\infty):=\#\{\XX\in\CQ(\BZ) :\|\XX\|\leqslant T, g(\pr_3(\XX))\neq 0\text{ and there exists }p\geqslant T,\pr_3(\XX)\mod p\in \CY_2(\BF_p)\}.$$

\textbf{Case I.} Let $\CB_1$ be the set consisting of $\YY\in\BZ^{n-2}$ satisfying \textbf{at least one of} the following conditions:
\begin{itemize}
	\item $g(\YY)=0$;
	\item there exists $p\geqslant T,\YY\mod p\in \CY_1(\BF_p)$.
\end{itemize}
We then have
\begin{align*}
	V_1(T;T,\infty)&\leqslant \#\{\XX=(X_1,\cdots,X_n)\in \mathcal{Q}(\BZ):\|\XX\|\leqslant T,\pr_3(\XX)\in \CB_1\}\\&=\sum_{\substack{\|\YY\|\leqslant T:\YY\in\CB_1}} H_1(\YY)\leqslant \left(\sum_{\substack{\YY\in\BZ^{n-2}:\|\YY\|\leqslant T\\g(\YY)=0}} 1+ \sum_{\substack{\YY\in \BZ^{n-2}:\|\YY\|\leqslant T\\\exists p\geqslant T, \YY\mod p\in \CY_1(\BF_p)}}1\right)H_1(\YY),
\end{align*}
where for any $\YY=(y_1,\cdots,y_{n-2})\in\BZ^{n-2}$, $$H_1(\YY):=\#\{(u,v)\in \BZ^2:u^2+av^2=m-\sum_{i=1}^{n-2}a_iy_i^2\}.$$
We have clearly $H_1(\YY)=O_\varepsilon(T^\varepsilon)$ uniformly for any $\|\YY\|\leqslant T$. 
To deal with the contribution from the sums in bracket, we recall the following quantitative version of Ekedahl's geometric sieve \cite{Ekedahl}.
\begin{theorem}[\cite{Bhargava} Lemma 3.1 \& Theorem 3.3, \cite{Browning-HB} Lemmas 2.1 \& 2.2]\label{le:Ekedahl}
	Let $L,k\geqslant 1$ and let $\CY\subset \BA^L_\BZ$ be a subscheme of codimension $k$. Then  \begin{enumerate}
		\item $\#\{\XX\in \CY(\BZ):\|\XX\|\leqslant T\}=O_\CY(T^{L-k});$
		\item Uniformly for any $M\gg_\CY 1$, $$\#\{\XX\in\BZ^L:\|\XX\|\leqslant T,\exists p\geqslant M,\XX\mod p\in \CY(\BF_p)\}=O_\CY\left(\frac{T^L\log T}{M^{k-1}\log M}+T^{L-k+1}\log T\right).$$
	\end{enumerate}
\end{theorem}
We apply Theorem \ref{le:Ekedahl} (1)  to the codimension one subscheme $(g=0)\subset\BA^{n-2}_\BZ$ for the first sum, and apply Theorem \ref{le:Ekedahl} (2) applied to the subscheme $\CY_1\subset\BA^{n-2}_\BZ$ of codimension at least two (thanks to \eqref{eq:codimYi}) in the remaining sum. We henceforth obtain
\begin{equation}\label{eq:V1infty}
V_1(T;T,\infty)= O_\varepsilon\left(\left(T^{n-3}+\frac{T^{n-2}\log T}{T\log T}+T^{n-3}\log T\right)\times T^\varepsilon\right)=O_\varepsilon(T^{n-3+\varepsilon}).
\end{equation}

\textbf{Case 2.}
Let us consider from now on the set $\CB_2$ of $\YY\in \BZ^{n-2}$ satisfying \textbf{all of} the conditions below:
\begin{itemize}
	\item $g(\YY)\neq 0$;
	\item for all $p\geqslant T$, the polynomial $f(\YY,x_{n-1})\mod p$  in $x_{n-1}$ is a non-zero.
\end{itemize}  
We now show that overall contribution from Case 2 bounds $V_2(T;T,\infty)$ from above. Now take $\XX\in \CQ(\BZ)$ and write $\YY=\pr_3(\XX)=(y_1,\cdots,y_{n-2})$. For every prime $p$, we write $\YY_p:=\pr_3(\XX)\mod p$. On recalling \eqref{eq:codim2}, the fibre $\CZ_{\YY_p}\subseteq\BA_{\BF_p}^2$ is defined by the equations $$m-\sum_{i=1}^{n-2}a_iy_i^2\equiv ax_{n-1}^2+x_n^2\mod p\quad \text{and}\quad f(\YY_p,x_{n-1})\equiv 0\mod p.$$ The condition $\pr_3(\XX)\mod p\in \CY_2(\BF_p)$ implies that  $\codim_{\BA_{\BF_p}^2}(\CZ_{\YY_p})=2$. In particular, if $p>\max_{1\leqslant i\leqslant n-1}(a_i)$, then the polynomial $f(\YY,x_{n-1}) \mod p$ in $x_{n-1}$ is non-zero.

Consequently, we have for $T\gg 1$, $$V_2(T;T,\infty)\leqslant \sum_{\substack{\YY=(y_1,\cdots,y_{n-2})\in\CB_2:\|\YY\|\leqslant T\\\exists u,v\in\BZ,u^2+av^2=m-\sum_{i=1}^{n-2}a_iy_i^2}} H_2(\YY),$$
where for any $\YY=(y_1,\cdots,y_{n-2})\in\BZ^{n-2}$,
$$H_2(\YY):=\sum_{\substack{p:p\geqslant T\\p\mid g(\YY)}}\sum_{\substack{y\in\BZ:|y|\leqslant T\\ p\mid f(\YY,y)}}\#\{z\in\BZ:z^2=m-ay^2-\sum_{i=1}^{n-2}a_iy_i^2\}.$$
Under the assumption that $\YY\in\CB_2$, we have $g(\YY)\neq 0$, and $g(\YY)\ll T^{\deg g}$, so the number of primes $\geqslant T$ dividing $g(\YY)$ is $\ll \deg g$. Hence
$$\sum_{\substack{p:p\geqslant T\\p\mid g(\YY)\neq 0}}1=O(\deg g).$$
Moreover, uniformly for any $p\geqslant T$,
 $$\sum_{\substack{y\in\BZ:|y|\leqslant T\\ p\mid f(\YY,y)}}1\ll(\deg f) \left(\frac{T}{p}+1\right)=O(\deg f),$$
 because $f(\YY,x_{n-1}) \mod p$ is a non-zero polynomial in $x_{n-1}$ and hence has at most $\deg f$ roots over $\BF_p$. All implied constants above depend only on the polynomials $f,g$, that is, the variety $Z$.
So 
\begin{align*}
	H_2(\YY)\ll  \deg f\times \deg g=O(1).
\end{align*}
Returning to the estimation of $V_2(T;T,\infty)$, the bound for $H_2(\YY)$ results in
$$ V_2(T;T,\infty)\ll\#C(T),$$
 where $$C(T):=\{\YY=(y_1,\cdots,y_{n-2})\in\BZ^{n-2}:\|\YY\|\leqslant T,\exists u,v\in\BZ,u^2+av^2=m-\sum_{i=1}^{n-2}a_iy_i^2\}.$$
 
We are reduced to estimating $\#C(T)$.
For this we appeal to Theorem \ref{thm:halfdimsieve}, whose proof will be given in the next section, by setting $Q_1(\xx)=m-\sum_{i=1}^{n-2}a_ix_i^2$ and $Q_2(y_1,y_2)=y_1^2+ay_2^2$. If $n\geqslant 4$, then the affine quadric $(Q_1(\xx)=0)\subset\BA^{n-2}$ is clearly smooth.
When $n=3$, the condition $-m\det q\neq \square $ is equivalent to the stated one in Theorem \ref{thm:halfdimsieve}. So all assumptions of Theorem \ref{thm:halfdimsieve} are satisfied. We thus obtain
\begin{equation}\label{eq:V2infty}
V_2(T;T,\infty)\ll \#C(T)\ll\frac{T^{n-2}}{(\log T)^{\frac{1}{2}}}.
\end{equation}

Finally, thanks to \eqref{eq:V1V2}, the bounds obtained in \textbf{Case 1}  \eqref{eq:V1infty}  and in \textbf{Case 2} \eqref{eq:V2infty} complete the proof. 
\end{proof}

\subsection{Proof of Theorem \ref{thm:geomsieve}}\label{se:proofofgeomsieve}
Keeping the notation in Theorems \ref{thm:primepoly}, \ref{th:intermediateprimes}, \ref{th:largeprimes}, 
it suffices to choose an appropriate parameter $\alpha$ to make these error terms satisfactory.
We therefore fix $\alpha>0$ that satisfies \begin{equation*}
	0<\alpha<\frac{\beta_{\BFQ}(n-2)}{\frac{(3n-2)(n-1)}{2}+\dim Z+1},
\end{equation*} so that $$\alpha\left(\frac{(3n-2)(n-1)}{2}+\dim Z+1\right)+(n-2)(1-\beta_{\BFQ})<n-2.$$

If $M< T^\alpha$, then  with the choice $N_1=M<N_2=T^\alpha$ in Theorem \ref{thm:primepoly}, we have $$V(T;M,T^\alpha)=O\left(\frac{T^{n-2}}{M^{\codim_{\BFQ}(Z)-1}\log M}+T^{\alpha\left(\frac{(3n-2)(n-1)}{2}+\dim Z+1\right)+(n-2)(1-\beta_{\BFQ})}\right).$$
Therefore, we finally obtain, since $ \codim_{\BFQ}(Z)\geqslant 2$,
\begin{align*}
	V(T;M,\infty)\leqslant
	&V(T;M,T^\alpha)+V(T;T^\alpha,T)+V(T; T,\infty)\\ =&O\left(\frac{T^{n-2}}{M^{\codim_{\BFQ}(Z)-1}\log M}+T^{\alpha\left(\frac{(3n-2)(n-1)}{2}+\dim Z+1\right)+(n-2)(1-\beta_{\BFQ})}+\frac{T^{n-2}}{\log T}+\frac{T^{n-2}}{(\log T)^{\frac{1}{2}}}\right)\\ =&O\left(T^{n-2}\left(\frac{1}{M^{\codim_{\BFQ}(Z)-1}\log M}+\frac{1}{(\log T)^{\frac{1}{2}}}\right)\right).
\end{align*}

If $M\geqslant T^\alpha$, then $$V(T;M,\infty)\leqslant V(T;T^\alpha,T)+V(T; T,\infty)=O\left(\frac{T^{n-2}}{(\log T)^{\frac{1}{2}}}\right).$$
This is also satisfactory because $$\frac{T^{n-2}}{M^{\codim_{\BFQ}(Z)-1}\log M}=o\left(\frac{T^{n-2}}{(\log T)^{\frac{1}{2}}}\right).$$
This finishes the proof of Theorem \ref{thm:geomsieve}.
\qed

\section{The half-dimensional sieve for affine quadrics}\label{se:polyresp}
The goal of this section is devoted to proving Theorem \ref{thm:halfdimsieve}. 
Our strategy is based on an upper-bound version of Brun-type half-dimensional sieve, developed in works \cite{Iwaniec,Friedlander-Iwaniec}. This is recorded in $\S$\ref{se:halfdimsievethm}. Keeping the notation in Theorem \ref{thm:halfdimsieve}, let us fix throughout this section a quadratic polynomial $Q_1(\xx)\in\BZ[x_1,\cdots,x_L]$, and a primitive positive-definite non-degenerate quadratic form $Q_2(\yy)\in\BZ[y_1,y_2]$. Let $D_{Q_2}$ be the discriminant of the form $Q_2$. We have $D_{Q_2}\leqslant -3$. In \S\ref{se:resp} we collect some well-known facts about representation of integers by primitive binary quadratic forms of negative discriminant. We shall give full details of the proof of Theorem \ref{thm:halfdimsieve} for the case $L\geqslant 2$ in \S\ref{se:n4resp}, and we indicate necessary modifications in \S\ref{se:n3resp} for the case $L=1$, which is a classical result going back to Bernays \cite{Bernays} (see also \cite[p. 2 Remarks 2 ]{Friedlander-Iwaniec}).

\subsection{Representation by binary quadratic forms}\label{se:resp}
Let us define two arithmetic functions $\bsquare(\cdot)$ and $\bsquarestar(\cdot)$, characterizing integers represented by the form $Q_2(u,v)$. First define
\begin{equation}\label{eq:legendresym}
\CP_{Q_2}:=\left\{p:\left(\frac{D_{Q_2}}{p}\right)=-1\right\},
\end{equation} 
where $\left(\frac{\cdot}{p}\right)$ is the Legendre symbol of modulus $p$.
For $n\in\BZ$, let
\begin{equation}
	\bsquare(n):=\begin{cases}
	1 &\text{ if } \exists u,v\in\BZ,n=Q_2(u,v);\\
	0 &\text{ otherwise},
	\end{cases}
\end{equation}
and
\begin{equation}
	\bsquarestar(n):=\begin{cases}
	1 &\text{ if } \forall p\mid n,p\not\in\CP_{Q_2};\\
	0 &\text{ if } \exists p\in\CP_{Q_2},p\mid n.
	\end{cases}
\end{equation}
 
Fix $n\in\BN_{\neq 0}$. We recall that (see for example \cite[Lemmas 1 \& 2]{James}), if there exists $p\in \CP_{Q_2}$ such that $n=p^{2k+1} m$, with $k\in\BN,m\in\BN_{\neq 0},\gcd(p,m)=1$, then $\bsquare(n)=0$. So, if $\bsquare(n)=1$, then for any $p\in\CP_{Q_2},p\mid n$,  there exists $k\in\BN$, such that $p^{2k}\|n$. We conclude from this analysis that if $\bsquare(n)=1$ then $\bsquarestar(n/r)=1$ with a certain perfect square $r$ dividing $n$, whose prime divisors (if any) are all in $\CP_{Q_2}$. 
The function $\bsquarestar$ is clearly multiplicative, however it is not the case for $\bsquare$ in general. 


\subsection{The half-dimensional sieve}\label{se:halfdimsievethm}
Let $\CA=(a_i)_{i\in I}$ be a finite sequence of integers indexed by $I$. 
For any $d\in\BN_{\neq 0}$, let $\CA_d$ be the subsequence consisting of elements of $\CA$ divisible by $d$. That is, $\CA_d=(a_i)_{i\in I_d}$ with $I_d=\{i\in I:d\mid a_i\}$.
Let $\CP$ be a subset of prime numbers.
For any $z>1$, define the sifting function
$$S(\CA,\CP,z):=\#\{i\in I:\gcd(a_i,\prod_{\substack{p:p\in\CP\\ p<z}}p)=1\}.$$
We will employ the following version of the half-dimensional sieve due to Friedlander and Iwaniec.
\begin{theorem}[\cite{Iwaniec} Theorem 1, \cite{Friedlander-Iwaniec} Lemma 1]\label{thm:I-F}
	There exists a continuous function $G:\mathopen]0,\infty[\to\BR_{>0}$ such that, for any multiplicative arithmetic function $\rho$ satisfying \begin{itemize}
		\item $0\leqslant\rho(p)< p$ for any $p\in\CP$;
		\item there exists $K>0$ such that
		\begin{equation}\label{eq:half-dim}
		\left|\sum_{\substack{p:p\in\CP\\ p<x}}\frac{\rho(p)}{p-\rho(p)}\log p-\frac{1}{2}\log x\right|\leqslant K
		\end{equation}
		for any $x\geqslant 2$,
	\end{itemize} 
	we have, for any $y,z\geqslant 2$, 
	$$S(\CA,\CP,z)\leqslant  \Lambda\left(G\left(w\right)+\mathcal{D}_{K,w}(\log y)^{-
	\frac{1}{5}}\right)\prod_{\substack{p:p\in\CP\\p<z}}\left(1-\frac{\rho(p)}{p}\right)+\sum_{\substack{d<y\\p\mid d\Rightarrow p<z,p\in\CP}}|\mu(d)R(d)|,$$
where $$w:=\frac{\log y}{\log z},\quad \Lambda:=\#\CA(=\# I),\quad  R(d):=\# \CA_d-\frac{\rho(d)}{d}\Lambda,$$
and $\mathcal{D}_{K,w}>0$ depends only on $K,w$.
\end{theorem}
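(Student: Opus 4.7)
The plan is to carry out the combinatorial (Rosser--Iwaniec) sieve in dimension $\kappa=\tfrac{1}{2}$. First I would verify that hypothesis \eqref{eq:half-dim} is precisely the Mertens-type condition characterizing this dimension: by partial summation it implies
$$\prod_{\substack{w\leqslant p<z\\ p\in\CP}}\left(1-\frac{\rho(p)}{p}\right)^{-1}=\left(\frac{\log z}{\log w}\right)^{1/2}\!\!\left(1+O_K\!\left(\frac{1}{\log w}\right)\right),$$
which plays the role of Mertens' product in the classical linear sieve. Writing $V(z):=\prod_{p<z,\,p\in\CP}(1-\rho(p)/p)$ and $P(z):=\prod_{p<z,\,p\in\CP}p$, the goal is to produce upper-bound sieve weights $\lambda_d^+$ supported on squarefree $d<y$ with prime factors in $\CP\cap[2,z)$, satisfying $|\lambda_d^+|\leqslant 1$ and $\sum_{d\mid n}\lambda_d^+\geqslant \mathbf{1}_{\gcd(n,P(z))=1}$, such that
$$\sum_d \lambda_d^+\frac{\rho(d)}{d}\leqslant V(z)\bigl(G(w)+\mathcal{D}_{K,w}(\log y)^{-1/5}\bigr).$$
Then the identity $\sum_d \lambda_d^+\#\CA_d=\Lambda\sum_d \lambda_d^+\rho(d)/d+\sum_d \lambda_d^+ R(d)$ delivers the theorem once the remainder is absorbed into $\sum_{d<y}|\mu(d)R(d)|$.

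Next I would construct such weights via the Rosser--Iwaniec combinatorial procedure. Starting from Buchstab's identity
$$S(\CA,\CP,z)=\#\CA-\sum_{\substack{p<z\\ p\in\CP}}S(\CA_p,\CP,p)$$
and iterating, while alternately discarding non-negative terms at a truncation level tuned to the dimension $\kappa=\tfrac{1}{2}$, one obtains inclusion--exclusion formulas that furnish such $\lambda_d^+$. The limiting function $G$ is characterized as the unique continuous solution on $(0,\infty)$ to a delay-differential system built from the half-dimensional Mertens asymptotic; equivalently, $G(w)$ is the limit of the normalised truncated sums $V(z)^{-1}\sum_d \lambda_d^+\rho(d)/d$ as $y\to\infty$ with the ratio $w=\log y/\log z$ held fixed. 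Continuity and positivity of $G$ follow directly from this construction, so that $G$ may be extracted once and for all, independent of the input sequence $\CA$.

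The main obstacle is to secure the genuine power-of-logarithm error term $(\log y)^{-1/5}$ rather than the merely $O(1/\log\log y)$ saving produced by naive Buchstab truncation. This refinement requires Iwaniec's analytic sharpening of the combinatorial sieve: one smooths the indicator of the truncation region at a scale of order $(\log y)^{1/5}$, compares the discrete and continuous sifting densities through a Mellin--Perron contour shift, and feeds the half-dimensional Mertens estimate \eqref{eq:half-dim} into a saddle-point analysis of the resulting integral. The exponent $1/5$ arises from the standard balance between the smoothing scale and the oscillatory bounds along the shifted contour, and the dependence of $\mathcal{D}_{K,w}$ on $K$ and $w$ is tracked from the constants in this step. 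Once this refined bound is in place, assembling the combinatorial weights with the main-term estimate yields the stated inequality.
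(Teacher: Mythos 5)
The paper does not prove this statement at all: it is imported verbatim from the literature (Theorem 1 of Iwaniec's half-dimensional sieve paper and Lemma 1 of Friedlander--Iwaniec), so there is no internal proof to compare against. Your proposal is therefore an attempt to reprove that cited result, and as written it is a roadmap rather than a proof. The route you outline is indeed the standard one: the reduction of hypothesis \eqref{eq:half-dim} to a Mertens-type product of exponent $\tfrac12$, upper-bound weights $\lambda_d^+$ supported on squarefree $d<y$ with prime factors in $\CP\cap[2,z)$, and absorption of the remainder into $\sum_{d<y}|\mu(d)R(d)|$ are all correct framing.

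The genuine gaps sit exactly where the content of the theorem lies. First, you never construct the weights: the claim that a Buchstab iteration ``tuned to dimension $\tfrac12$'' yields $\sum_d\lambda_d^+\rho(d)/d\leqslant V(z)\bigl(G(w)+\mathcal{D}_{K,w}(\log y)^{-1/5}\bigr)$, together with the existence, continuity and positivity of $G$ as the solution of the associated delay-differential system, is asserted rather than derived; moreover you would need this uniformly over all densities $\rho$ satisfying \eqref{eq:half-dim} with a given $K$, with constants depending only on $K$ and $w$ --- a uniformity that is essential for the paper's application, where the sieve is applied to the whole family $\varrho_r$, and which your sketch never addresses. Second, the error term: you correctly observe that naive truncation does not give a power of $\log y$, but the proposed remedy (smoothing at scale $(\log y)^{1/5}$, a Mellin--Perron contour shift, a saddle-point analysis) is an appeal to the general shape of Iwaniec's refinement, not an argument; nothing in the sketch shows where the exponent $\tfrac15$ comes from or how $\mathcal{D}_{K,w}$ is controlled. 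Since the result is needed only as a black box, the honest alternatives are to cite it, as the paper does, or to carry out the full half-dimensional sieve analysis; the proposal as it stands does neither.
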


\subsection{Proof of Theorem \ref{thm:halfdimsieve} for the case $L\geqslant 2$}\label{se:n4resp}
With the notation in $\S$\ref{se:resp}, our ultimate goal is to deduce the desired non-trivial upper bound for 
\begin{equation}\label{eq:goalhalfsieve}
\sum_{\XX\in\BZ^L:\|\XX\|\leqslant T} \bsquare(Q_1(\XX))
\end{equation}
via applying Theorem \ref{thm:I-F} to $S(\CA,\CP,z)$ with appropriately chosen $\rho,\CA,\CP,z$. 
Firstly it is convenient to deal separately with the $\XX$'s with $Q_1(\XX)=0$, because $\mathfrak{b}(0)=1$ but $\mathfrak{b}^*(0)=0$. For this we use the evident upper bound (Theorem \ref{le:Ekedahl} (1))\begin{equation}\label{eq:B1T}
B_1(T):=\#\{\XX\in\BZ^L:\|\XX\|\leqslant T,Q_1(\XX)=0\}\ll T^{L-1}.
\end{equation} 
Now the discussion in \S\ref{se:resp} shows that 
\begin{equation}\label{eq:step1}
\sum_{\XX\in\BZ^L:\|\XX\|\leqslant T} \bsquare(Q_1(\XX))\leqslant B_1(T)+\sum^*_{r} \sum_{\substack{\XX\in\BZ^L:\|\XX\|\leqslant T\\ r\mid Q_1(\XX)}} \bsquarestar(Q_1(\XX)/r),
\end{equation}
where the sum with superscript $*$ means that $r$ is restricted to all non-zero perfect squares whose prime divisors are all in $\CP_{Q_2}$ \eqref{eq:legendresym}.

We consider the sequence
$$\CA(T):=\{Q_1(\XX)\}_{\XX\in\BZ^L:\|\XX\|\leqslant T}.$$ 
For $r\in\BN_{\neq 0}$, we define the subsequence
$$\CA(T)_r:=\{Q_1(\XX)/r\}_{\substack{\XX\in\BZ^L:\|\XX\|\leqslant T\\r\mid Q_1(\XX)}}.$$
Consider the arithmetic multiplicative functions $\omega,\varrho$, defined for $N\in\BN_{\neq 0}$ by
\begin{equation}\label{eq:omegarho}
	\omega(N):=\#\{\xi\in (\BZ/N\BZ)^L:Q_1(\xi)\equiv 0\mod N\},\quad \varrho(N):=\frac{\omega(N)}{N^{L-1}}.
\end{equation}
Then $\omega(N)\ll_{\varepsilon} N^{L-1+\varepsilon}$. 
We have, by the Chinese remainder theorem, that for any $r\in\BN_{\neq 0}$,
\begin{equation}\label{eq:omegaN}
\begin{split}
\#\CA(T)_{r}&=\#\{\XX\in\BZ^L:\|\XX\|\leqslant T,r\mid Q_1(\XX)\}\\&=\sum_{\substack{\xi\in(\BZ/r\BZ)^L\\  Q_1(\xi)\equiv 0\mod r}}\#\{\mathbf{y}\in (r\BZ)^L:\|\mathbf{y}+\xi\|\leqslant T\}\\ &=\omega(r)\left(\frac{2T}{r}+O(1)\right)^L.
\end{split}
\end{equation}
By assumption, the affine variety $(Q_1=0)\subset\BA_\BQ^{L}$. Then there exists $l_0\in\BN_{\neq 0}$ such that the affine scheme $\Spec\left(\BZ[x_1,\cdots,x_L]/(Q_1(\xx))\right)$ is smooth over $\BZ[1/l_0]$ with geometrically integral fibres. So by the Lang-Weil estimate \eqref{eq:Lang-WeilFpsmooth}, for any $p\nmid l_0$, we have
$$\varrho(p)=1+O(p^{-\frac{1}{2}}).$$
Let us define \begin{equation}\label{eq:CP2}
\CP_{Q_2}^\prime=\CP_{Q_2}\setminus \{p:\varrho(p)=p\}.\end{equation}
Since there are at most finitely many primes $p$ satisfying $\varrho(p)=p$,
the primes in the set $\CP_{Q_2}^\prime$ have the same density as those in $\CP_{Q_2}$, namely one half (amongst the prime residues modulo $4D_{Q_2}^\prime$ with $D_{Q_2}^\prime\mid D_{Q_2}$ such that $D_{Q_2}^\prime$ is square-free and $D_{Q_2}/D_{Q_2}^\prime$ is a square (cf. \cite[VI. Propositions 5 \& 14]{Serre})). 
With these notions, for any $r\in\BN_{\neq 0},\lambda>0$, one has
\begin{equation}\label{eq:step1prime}
\sum_{\XX\in\BZ^L:\|\XX\|\leqslant T} \bsquarestar(Q_1(\XX)/r)\leqslant S(\CA(T)_r,\CP_{Q_2},T^\lambda)\leqslant S(\CA(T)_r,\CP_{Q_2}^\prime,T^\lambda).
\end{equation}

We next claim that it suffices to deal with sufficiently small $r$'s in \eqref{eq:step1}, more precisely $r<T^\gamma$ for certain $0<\gamma<\Delta_L:=\frac{1}{4L}$.
Whenever there exists $r=q^2\mid Q_1(\XX)\neq 0$ for some $\|\XX\|\leqslant T$, we always have $r\leqslant |Q_1(\XX)| \ll T^2$, in other words, $q\leqslant\sqrt{|Q_1(\XX)|}\ll T$, and so by \eqref{eq:omegaN},
\begin{align*}
\#\CA(T)_{q^2} &\ll \omega(q^2)\times \left( \left(\frac{T}{q^2}\right)^L+1\right)\\ &\ll_\varepsilon (q^2)^{L-1+\varepsilon}\times\left( \left(\frac{T}{q^2}\right)^L+1\right)\\ &\ll_{\varepsilon}\frac{T^L}{q^{2-\varepsilon}}+q^{2L-2+\varepsilon}.
\end{align*}
Therefore the contribution from all $r\in[T^\gamma,T^{1+2\Delta_L}]$ is 
\begin{equation}\label{eq:bd1}
\sum^*_{r\in[T^\gamma,T^{1+2\Delta_L}]} \sum_{\substack{\XX\in\BZ^L:\|\XX\|\leqslant T\\ r\mid Q_1(\XX)}} \bsquarestar(Q_1(\XX)/r)\leqslant\sum_{q\in[T^\frac{\gamma}{2},T^{\frac{1}{2}+\Delta_L}]}\#\CA(T)_{q^2}\ll_\varepsilon T^{L-\frac{\gamma}{2}+\varepsilon}+T^{L-\Delta_L+\varepsilon}\ll T^{L-\frac{\gamma}{2}+\varepsilon}.
\end{equation}
This is satisfactory compared to the expected leading term $\frac{T^L}{\sqrt{\log T}}$.
Next, for \begin{equation}\label{eq:bdq}
T^{1+2\Delta_L}<r\ll T^2\Leftrightarrow T^{\frac{1}{2}+\Delta_L}<q\ll T,
\end{equation} we regard the $(L+1)$-tuple $(\XX,q)=(X_1,\cdots,X_L,q)\in\BZ^{L+1}$ as an integral point on the affine quadric 
$$Q_{s}(\xx,z):(Q_1(\xx)-sz^2=0)\subset \BA^{L+1},$$
where $s$ is an auxiliary integer parameter satisfying $0\neq s\ll T^{1-2\Delta_L}$, thanks to the preassigned bound \eqref{eq:bdq} for $q$ and the fact that if $Q_1(\XX)=0$ then $\mathfrak{b}^*(Q_1(\XX)/r)=0$ for any $r\in\BN_{\neq 0}$.
We want to insert the uniform upper bound estimate in Theorem \ref{thm:BrowningGorodnikaffinequadric} for the quadrics $Q_{s}(\xx,z)$ with $s\neq 0$. Recall that we assume $L\geqslant 2$, so $\rank(Q_{s})_0\geqslant 2$ whenever $s\neq 0$, and moreover the quadratic polynomial $Q_1(\xx)-sz^2$ is irreducible. Since otherwise $Q_1(\xx)-sz^2=(s_1z+A_1(\xx))(s_2z+A_2(\xx))$ and this would imply that $Q_1(\xx)=A_1(\xx)A_2(\xx)$, a contradiction to the assumption that $(Q_1=0)$ is smooth. So the hypotheses of Theorem \ref{thm:BrowningGorodnikaffinequadric} are satisfied. 
We conclude that the contribution of such $r$'s satisfying \eqref{eq:bdq} is
\begin{equation}\label{eq:bd2}
\begin{split}
\sum^*_{T^{1+2\Delta_L}<r\ll T^2} \sum_{\substack{\XX\in\BZ^L:\|\XX\|\leqslant T\\ r\mid Q_1(\XX)}} \bsquarestar(Q_1(\XX)/r)&\ll \sum_{0\neq  s\ll T^{1-2\Delta_L}}\#\{(\XX,q)\in\BZ^{L+1}:\|(\XX,q)\|\ll T,Q_s(\XX,q)=0\} \\ &\ll_{\varepsilon}\sum_{ s\ll T^{1-2\Delta_L}} T^{L+1-2+\varepsilon}\ll T^{L-2\Delta_L+\varepsilon}.
\end{split}
\end{equation}
This is also satisfactory and proves our claim. Gathering together \eqref{eq:step1} \eqref{eq:step1prime}, equation \eqref{eq:goalhalfsieve} now becomes
\begin{equation}\label{eq:step2}
\begin{split}
\sum_{\XX\in\BZ^L:\|\XX\|\leqslant T} \bsquare(Q_1(\XX))&\leqslant B_1(T)+\sum^*_{r<T^\gamma} \sum_{\substack{\XX\in\BZ^L:\|\XX\|\leqslant T\\ r\mid Q_1(\XX)}} \bsquarestar(Q_1(\XX)/r)+\sum^*_{r\geqslant T^\gamma} \sum_{\substack{\XX\in\BZ^L:\|\XX\|\leqslant T\\ r\mid Q_1(\XX)}} \bsquarestar(Q_1(\XX)/r)\\
&\leqslant B_2(T)+\sum^*_{r<T^\gamma} S(\CA(T)_r,\CP_{Q_2}^\prime,T^\lambda),
\end{split}
\end{equation}
where $$B_2(T)=B_1(T)+\sum^*_{r\geqslant T^\gamma} \sum_{\substack{\XX\in\BZ^L:\|\XX\|\leqslant T\\ r\mid Q_1(\XX)}} \bsquarestar(Q_1(\XX)/r)=O_\varepsilon(T^{L-\frac{\gamma}{2}+\varepsilon})$$ according to \eqref{eq:B1T} \eqref{eq:bd1} \eqref{eq:bd2}, and $0<\gamma<\Delta_L,\lambda>0$ are to be chosen later. Everything now boils down to the estimation of $S(\CA(T)_r,\CP_{Q_2}^\prime,T^\lambda)$. 

Our task is to apply Theorem \ref{thm:I-F} to each $\CA(T)_r$. For this purpose we fix $r\in\BN_{\neq 0}$. If $\omega(r)=0$ then the subsequence $\CA(T)_r$ is empty. We therefore assume from now on that $\omega(r)\neq 0$.
Given that $r<\Delta_L<1$, we can develop  \eqref{eq:omegaN} as
\begin{equation}\label{eq:omegaNr}
\Lambda_r:=\#\CA(T)_r=\frac{\omega(r)}{r^L}(2T)^L+O_\varepsilon(T^{L-1+\varepsilon}).
\end{equation}
We define arithmetic functions
$$\omega_r(N):=\frac{\omega(rN)}{\omega(r)},\quad \varrho_r(N):=\frac{\omega_r(N)}{N^{L-1}}=\frac{\varrho(rN)}{\varrho(r)},\quad N\in\BN_{\neq 0}.$$
Let us now verify that the functions $\omega_r$ and $\varrho_r$ are multiplicative. Indeed, for any $N\in\BN_{\neq 0}$, we factorize $N=N_1N_2,r=r_1r_2$ such that $\gcd(r_1N_1,r_2N_2)=1$. Then 
\begin{align*}
	\omega_r(N)=\frac{\omega(r_1r_2N_1N_2)}{\omega(r_1r_2)}&=\frac{\omega(r_1N_1)}{\omega(r_1)}\frac{\omega(r_2N_2)}{\omega(r_2)}\\ &=\frac{\omega(r_1N_1)\omega(r_2)}{\omega(r_1)\omega(r_2)}\frac{\omega(r_2N_2)\omega(r_1)}{\omega(r_2)\omega(r_1)}\\ &=\frac{\omega(rN_1)}{\omega(r)}\frac{\omega(rN_2)}{\omega(r)}=\omega_r(N_1)\omega_r(N_2).
\end{align*}
By Hensel's lemma, for any $p\nmid l_0$ and $p\mid r$, one has
$\omega(pr)=\omega(r)p^{L-1}$, and by the Chinese remainder theorem, for any $p\nmid r$, $\omega(rp)=\omega(r)\omega(p)$.
Hence
\begin{equation}\label{eq:varrp}
\varrho_r(p)=\begin{cases}
\varrho(p) & \text{ if } p\nmid r;\\
1 &\text{ if } p\mid r.
\end{cases}
\end{equation}
This implies $\varrho_r(p)=1+O(p^{-\frac{1}{2}})$ uniformly for any prime $p$ and any $r\in\BN_{\neq 0}$, so by Mertens' first theorem on arithmetic progressions (cf. e.g. \cite[Theorem 2.2]{Iwaniec-Kolwalski}), the hypotheses of Theorem \ref{thm:I-F}, in particular \eqref{eq:half-dim}, are satisfied for the arithmetic function $\varrho_r$ and the set $\CP_{Q_2}^\prime$ uniformly for any $r$ (that is, the remainder term $K$ in \eqref{eq:half-dim} depends only on $Q_1,Q_2$ and is independent of $r$).
 
We are now in a position to apply Theorem \ref{thm:I-F} to $\varrho_r,\CA(T)_{r},\CP_{Q_2}^\prime$ for each perfect square $r<T^\gamma$ with all prime divisors in $\CP_{Q_2}$, with $z=T^\lambda,y=T^\beta$, for $\lambda,\beta>0$ small enough with $\gamma+\beta<1$. For this purpose  we need to evaluate, for each $d\in\BN_{\neq 0}$, the cardinality of the subsequence $\CA(T)_{rd}$, using \eqref{eq:omegaNr} and the definition of $\omega_r,\varrho_r$:
\begin{align*}
	\#\CA(T)_{rd}&=\frac{\omega(dr)}{(dr)^L}(2T)^L+O_\varepsilon(T^{L-1+\varepsilon})\\ &=\frac{\omega_r(d)}{d^L}\frac{\omega(r)}{r^{L}}(2T)^{L}+O_\varepsilon(T^{L-1+\varepsilon})\\ &=\frac{\varrho_r(d)}{d}\Lambda_r+O_\varepsilon(T^{L-1+\varepsilon}).
\end{align*}
The above computation shows that $$R_r(d):=\#\CA(T)_{rd}-\frac{\varrho_r(d)}{d}\Lambda_r=O_\varepsilon(T^{L-1+\varepsilon}).$$
Therefore, according to Theorem \ref{thm:I-F}, we obtain
\begin{align*}
	&S(\CA(T)_r,\CP_{Q_2}^\prime,T^\lambda)\\ \leqslant & \left(G(\beta/\lambda)+\mathcal{D}_{\lambda,\beta}(\log T)^{-\frac{1}{5}}\right)\Lambda_r\prod_{\substack{p:p\in\CP_{Q_2}^\prime\\p<T^\lambda}}\left(1-\frac{\varrho_r(p)}{p}\right)+\sum_{d<T^{\beta}}|R_r(d)|\\ = & \left(G(\beta/\lambda)+\mathcal{D}_{\lambda,\beta}(\log T)^{-\frac{1}{5}}\right)\left(\frac{\omega(r)}{r^L} \prod_{\substack{p:p\in\CP_{Q_2}^\prime\\p<T^\lambda}}\left(1-\frac{\varrho_r(p)}{p}\right)\right)(2T)^L+O_\varepsilon\left(\sum_{d<T^{\beta}}T^{L-1+\varepsilon}\right),
\end{align*}
where $\mathcal{D}_{\lambda,\beta}>0$ depends only on $\lambda,\beta,Q_1,Q_2$.
Thanks to \eqref{eq:varrp}, the leading term in the last expression, up to the factor $\left(G(\beta/\lambda)+\mathcal{D}_{\lambda,\beta}(\log T)^{-\frac{1}{5}}\right)(2T)^L$, can be written as
$$\left( \prod_{\substack{p:p<T^\lambda\\ p\in \CP_{Q_2}^\prime}}\left(1-\frac{\varrho(p)}{p}\right)\right) \times\left(\frac{\omega(r)}{r^L}\prod_{\substack{p:p\mid r\\p\in \CP_{Q_2}^\prime}}\left(\left(1-\frac{1}{p}\right)\left(1-\frac{\varrho(p)}{p}\right)^{-1}\right)\right).$$

We are finally in a position to evaluate the sum in \eqref{eq:step2} as follows. 
The series $\sum_{r=\square} c_r$, formed by $$c_r:=\frac{\omega(r)}{r^L}\prod_{\substack{p:p\mid r\\p\in \CP_{Q_2}^\prime}}\left(\left(1-\frac{1}{p}\right)\left(1-\frac{\varrho(p)}{p}\right)^{-1}\right)=\frac{\omega(r)}{r^L}\prod_{\substack{p:p\mid r\\p\in \CP_{Q_2}^\prime}}\frac{p-1}{p-\varrho(p)},$$ converges, because
$$c_r\ll_{\varepsilon} \frac{r^{L-1+\varepsilon}}{r^{L}}\times r^\varepsilon\ll_{\varepsilon}\frac{1}{r^{1-\varepsilon}}.$$
Therefore,
\begin{equation}\label{eq:step3}
\begin{split}
&\sum_{\substack{r=\square, r<T^\gamma\\ p\mid r\Rightarrow p\in \mathcal{P}_{Q_2}}}S(\CA(T)_r,\CP_{Q_2}^\prime,T^\lambda)\\
&\ll_\varepsilon \left(\left(G(\beta/\lambda)+\mathcal{D}_{\lambda,\beta}(\log T)^{-\frac{1}{5}}\right)(2T)^L\prod_{\substack{p:p<T^{\lambda}\\ p\in \CP_{Q_2}^\prime}}\left(1-\frac{\varrho(p)}{p}\right)\right)\left(\sum_{\substack{r=\square, r<T^\gamma\\ p\mid r\Rightarrow p\in \mathcal{P}_{Q_2}}}c_r\right) +\sum_{\substack{r,d\in\BN_{\neq 0}\\ r<T^\gamma,d<T^{\beta }}} T^{L-1+\varepsilon}\\ &=O_{\gamma,\lambda,\beta,\varepsilon}\left( \frac{T^L}{\sqrt{(\log T)}}\left(1+(\log T)^{-\frac{1}{5}}\right) +T^{L-1+(\gamma+\beta)+\varepsilon}\right).
\end{split}
\end{equation}
Upon choosing $\lambda,\gamma,\beta,\varepsilon>0$ small enough, this finishes the proof of Theorem \ref{thm:halfdimsieve} for the case $L\geqslant 2$. \qed

\begin{remark}
	It would be interesting to ask whether a lower bound of expected magnitude $\frac{T^L}{\sqrt{\log T}}$ exists for \eqref{eq:goalhalfsieve}, just as was established in \cite[Theorem 1]{Friedlander-Iwaniec} for the case $L=1$ under some mild assumptions.					
\end{remark}
\subsection{Sketch of proof of Theorem \ref{thm:halfdimsieve} for the case $L=1$}\label{se:n3resp}
In this case it is equivalent to showing the following.
	For $b_1,b_2\in\BZ_{\neq 0}$, assume that $-D_{Q_2}b_1b_2\neq\square$. Then \begin{equation}\label{eq:respL=1}
		\#\{x\in \BZ:|x|\leqslant T:\exists u,v\in\BZ,b_1x^2+b_2=Q_2(u,v)\}\ll \frac{T}{\sqrt{\log T}}.
	\end{equation}
	We may assume that $b_1>0$, since the set above has finite cardinality if $b_1<0$ and the upper bound \eqref{eq:respL=1} is trivially satisfied.
 Most reasoning (especially the treatment of error terms) is akin to \S\ref{se:n4resp}. We choose to only outline how the dominant term comes out, which is the major difference between these two cases.  Let $\mathfrak{D}$ be the square-free part of $-b_1b_2$. 
 
Let us first assume $\mathfrak{D}=1$. Upon change of variables we are reduced to the case where $b_1=1$ and $b_2=-b^2=\square$, so that the identity inside \eqref{eq:respL=1} is written as $$(x+b)(x-b)=Q_2(u,v).$$ Since $\gcd(x+b,x-b)\mid 2b$, on defining $$L_1(x):=x+b,\quad L_2(x):=x-b,$$ and $$\CP_{Q_2}^\triangle:=\CP_{Q_2}\cup\{p:p\mid 2b\},\quad 	\mathfrak{b}^{*\triangle}(n)=\begin{cases}
	1 &\text{ if } \forall p\mid n,p\not\in\CP_{Q_2}^\triangle;\\
	0 &\text{ if } \exists p\in\CP_{Q_2}^\triangle,p\mid n,
\end{cases}$$ then our discussion in $\S$\ref{se:resp} shows that \eqref{eq:step1} can be modified correspondingly as
$$\sum_{\X\in\BZ:\|\X\|\leqslant T} \bsquare(Q_1(\X))\leqslant B_1(T)+\sum^*_{r^\triangle} \sum_{\substack{\X\in\BZ:\|\X\|\leqslant T\\ r^\triangle\mid L_1(\X)}} \mathfrak{b}^{*\triangle}(L_1(\X)/r^\triangle)+\sum^*_{r^\triangle} \sum_{\substack{\X\in\BZ:\|\X\|\leqslant T\\ r^\triangle\mid L_2(\X)}} \mathfrak{b}^{*\triangle}(L_2(\X)/r^\triangle),$$ where $B_1(T)$ is the same as \eqref{eq:B1T} and the sums with superscript $*$ are over non-zero squared integers $r^\triangle$ such that $p\mid r^\triangle \Rightarrow p\in \CP_{Q_2}^\triangle$. The remaining argument is identical and thus omitted. 

Let us now discuss the case where $\mathfrak{D}\neq 1$, which means that the polynomial $Q_1(\xx)$ is irreducible over $\BQ$. In particular the Legendre symbol $\left(\frac{\mathfrak{D}}{\cdot}\right)$ is non-constant. 
Let us keep using the notation $\CP_{Q_2}$ \eqref{eq:legendresym} and  $\omega(\cdot),\varrho(\cdot)$ \eqref{eq:omegarho} defined in \S\ref{se:n4resp}. Then for any $p\nmid 2b_1b_2$, 
$$\omega(p)=\varrho(p)=\begin{cases}
	2 &\text{ if } \left(\frac{\mathfrak{D}}{p}\right)=1;\\
	0 &\text{ otherwise}.
\end{cases}$$
Since $-D_{Q_2}b_1b_2\neq\square$, the map \begin{align*}
	\{p \text{ prime}:p\nmid 2b_1b_2D_{Q_2}\}&\longrightarrow \{\pm 1\}^2\\
	p &\longmapsto \left(\left(\frac{D_{Q_2}}{p}\right),\left(\frac{\mathfrak{D}}{p}\right)\right)
\end{align*} is surjective. On the one hand if $|D_{Q_2}\mathfrak{D}|\neq \square$, let $D_0$ be the square-free part of $D_{Q_2}\mathfrak{D}$. Therefore we have $|D_0|\neq 1$, and so primes in exactly one quarter of the residue classes modulo $4|D_0|$ satisfy
\begin{equation}\label{eq:oppositesign}
	\left(\frac{D_{Q_2}}{p}\right)=-\left(\frac{\mathfrak{D}}{p}\right)=-1.
\end{equation}
On the other hand, if $|D_{Q_2}\mathfrak{D}|=\square$, then in this case primes in exactly one quarter of the residue classes modulo $4|\mathfrak{D}|$ satisfy \eqref{eq:oppositesign}. To summarize, recall $\CP_{Q_2}^\prime$ \eqref{eq:CP2} and let $$\CP_{Q_2}^{''}:=\CP_{Q_2}^\prime\setminus\{p:p\mid 2b_1b_2\}.$$ 
Then by Mertens' theorem regarding primes in arithmetic progressions, one deduces that
$$\sum_{\substack{p:p\in\CP_{Q_2}^{''}\\ p<x}}\frac{\varrho(p)}{p-\varrho(p)}\log p\sim  2\sum_{\substack{p:\left(\frac{D_{Q_2}}{p}\right)=-\left(\frac{\mathfrak{D}}{p}\right)=-1\\ p<x}}\frac{\log p}{p}\sim 2\times \frac{1}{4}\log x=\frac{1}{2}\log x.$$
Therefore $\varrho$ satisfies the condition \eqref{eq:half-dim} in the half-dimensional sieve (Theorem \ref{thm:I-F}). So the dominant term in \eqref{eq:step3} takes the desired form:
$$T\prod_{\substack{p\in\CP_{Q_2}^{''}\\p<T^\lambda}}\left(1-\frac{\varrho(p)}{p}\right)\asymp T\prod_{\substack{p:\left(\frac{D_{Q_2}}{p}\right)=-\left(\frac{\mathfrak{D}}{p}\right)=-1\\p<T^\lambda}}\left(1-\frac{2}{p}\right)\asymp \frac{T}{\sqrt{\log T}}.$$

\begin{remark}\label{rmk:isotropicsquare}
	To further clarify Remark \ref{rmk:notsquare} at this point, note that if $-D_{Q_2}b_1b_2=\square$, which is equivalent to $D_{Q_2}\mathfrak{D}=\square$, then
	$$\left(\frac{D_{Q_2}}{p}\right)=-1\Leftrightarrow \left(\frac{\mathfrak{D}}{p}\right)=-1.$$
	The main term in the sifting function now grows like
	$$T\prod_{\substack{p\in\CP_{Q_2}^\prime\\p<T^\lambda}}\left(1-\frac{\varrho(p)}{p}\right)=T\prod_{\substack{p\in\CP_{Q_2}\\p<T^\lambda}}\left(1-\frac{2}{p}\right)\asymp T,$$
	which does not give the log saving.
\end{remark}

\section*{Acknowledgments}
We are grateful to Mikhail Borovoi, Zeév Rudnick and Olivier Wittenberg  for their interest in our work.
We would like to address our gratitude to Ulrich Derenthal for his generous support at Leibniz Universit\"at Hannover.
We are in debt to Tim Browning for an enlightening discussion and to the anonymous referees for critical comments, which lead to overall improvements of various preliminary versions of this paper. Part of this work was carried out and reported during a visit to the University of Science and Technology of China. We thank Yongqi Liang for offering warm hospitality. The first author was supported by a Humboldt-Forschungsstipendium. The second author was supported by grant DE 1646/4-2 of the Deutsche Forschungsgemeinschaft.

\end{document}